%% file: main_arxiv_July_2026.tex
\documentclass[11pt]{amsart}

   \usepackage[a4paper, margin=100pt]{geometry}    
    \usepackage[T1]{fontenc}
    \usepackage[british]{babel}
    \usepackage[utf8]{inputenc}

    \usepackage{mathabx}

\normalsize 
    \addtocontents{toc}{\setcounter{tocdepth}{1}}

    \usepackage{xcolor}
    \usepackage{hyperref}
         \hypersetup{colorlinks, citecolor=blue, filecolor=blue, linkcolor=black, urlcolor=blue}
    \usepackage{soul}
    \usepackage[numbers]{natbib}    
   
    \usepackage{caption}
    \usepackage{subcaption}

    \usepackage{tikz-cd}

\input{macros.tex}

    \usepackage{bigints}
    \usepackage{xfrac}
    \usepackage{mathrsfs}
    \usepackage{comment}
    \usepackage{verbatim}
    \usepackage{adjustbox}
    \usepackage{graphicx}
    \usepackage{import}

    \newcommand{\dist}{\mathrm{dist}}
    \newcommand{\uv}{\underline{v}}

\begin{document}

    \title[cocycles with quasi-conformality]{Cocycles with Quasi-Conformality II:\\ Ergodic Measures with Positive Entropy}

\author[M. Nassiri]{Meysam Nassiri}
\address{School of Mathematics, Institute for Research in Fundamental Sciences (IPM), P.O. Box 19395-5746, Tehran, Iran}
\email{nassiri@ipm.ir}
\author[H. Rajabzadeh]{Hesam Rajabzadeh}
\address{School of Mathematics, Institute for Research in Fundamental Sciences (IPM), P.O. Box 19395-5746, Tehran, Iran}
\email{rajabzadeh@ipm.ir}
\author[Z. Reshadat]{Zahra Reshadat}
\address{School of Mathematics, Institute for Research in Fundamental Sciences (IPM), P.O. Box 19395-5746, Tehran, Iran}
\email{zahrareshadat@ipm.ir}

\date{}

\begin{abstract}As the second part of a series on linear cocycles over chaotic systems, this paper establishes a "multiple covering principle" that robustly yields positive-entropy ergodic measures supported on fiberwise uniformly bounded orbits. Using this mechanism, we prove that any continuous $\mathrm{SL}(d,\mathbb{R})$ cocycle over a positive-entropy subshift of finite type either admits a dominated splitting or can be $\mathcal{C}^0$-approximated by one that 
\emph{stably} supports such measures. 
The stability holds in Hölder topology which is optimal.
Additionally, for non-isometric cocycles, we show that the topological entropy of these bounded orbits is strictly less than that of the base subshift.
\end{abstract}

\maketitle
\tableofcontents

\section{Introduction}\label{sec:intro}
Let $f \colon X \to X$ be a homeomorphism on a metric space $X$ and $G$ be a closed subgroup of $\mathrm{GL}(d, \R)$. A continuous map $A \colon X \to G$ defines a linear cocycle over $f$, denoted by $(f, A)$, which acts on the trivial vector bundle $X \times \R^d$ via the skew-product dynamics:
$$F(x, v) = (f(x), A(x)v).$$

Linear cocycles are ubiquitous in dynamical systems and mathematics at large. They arise naturally as the derivative map of smooth dynamical systems on manifolds, in the study of random matrix products, in the spectral theory of Schrödinger operators, and in many other contexts. The central theme in the study of linear cocycles is understanding the asymptotic behavior of the sequence of fiber maps $A_n(x) := A(f^{n-1}x) \cdots A(f(x))A(x)$ as $n$ grows.

One of the robust asymptotic behaviors occurs when a cocycle admits a \emph{dominated splitting}. This is a projective form of hyperbolicity characterized by an invariant splitting of the fibers into subbundles with uniformly separated growth rates. Precisely, a dominated splitting for the cocycle $(f,A)$ is a continuous decomposition of the fibers over $X$ into $A$-invariant subbundles $\R^d=E_1(x)\oplus E_2(x)$ such that there exist constants $C > 0$ and $0 < \lambda < 1$ satisfying
$$ {\|A_n(x)|_{E_1(x)}\| . \|(A_n(x)|_{E_2(x)})^{-1}\| \leq C \lambda^n} $$
for all $x \in X$ and $n \geq 1$. It is well known that this property is stable under small continuous perturbations and dictates well-structured dynamical properties. These include a uniform exponential separation between two consecutive singular values of the cocycle under iteration, or the non-degeneracy of the Lyapunov spectrum with respect to all invariant measures on the base. It is known due to \cite{BochiGourmelon} that the former property is equivalent to the existence of domination. For a comprehensive discussion on dominated splitting and its dynamical consequences, we refer the reader to \cite{BDV,PujalsSambarino2,sambarino2016short,bochi2019anosov} and the references therein.
Given the structure imposed by a dominated splitting, a fundamental question naturally arises: {\it what are the typical behaviors of a cocycle in the absence of a dominated splitting?} The key to classifying typical cocycles without domination lies in understanding the existence and stability of non-exponential growth on the fibers. Foundational work by Furstenberg \cite{Furstenberg} on random matrix products,  extended to linear and smooth cocycles by Ledrappier \cite{Led86} and Avila and Viana \cite{Viana08, Avila-Viana}, indicates that typical $\cC^\alpha$ cocycles for every $\alpha>0$ exhibit non-zero Lyapunov exponents. Consequently, one expects exponential growth of norms for typical points, which intuitively suggests that orbits with sub-exponential or bounded growth are scarce and easily destroyed under small perturbations.

In the first paper of this series \cite{NRR1}, we demonstrated the $\cC^\alpha$-stable existence of uniformly bounded orbits for typical cocycles when domination fails. Specifically, it was shown that for volume-preserving cocycles over the shift map, any cocycle without domination can be $\cC^0$-approximated by cocycles where the set of points with \emph{fiberwise bounded orbit}  (i.e.,  points $x \in X$ for which the norms of $A_n(x)^{\pm 1}$ along their full orbit remain uniformly bounded) is $\cC^\alpha$-stably non-empty. In the present paper, we elevate this phenomenon from the existence of individual orbits to a macroscopic, dimensional scale. More concretely, by utilizing the mechanism developed in this paper, we establish that for all  those $\cC^\alpha$-open sets of cocycles, the set of bounded orbits actually contains a closed subsystem carrying positive topological entropy. 
More precisely, let $\mathrm{B}^{\pm}(A, \kappa)$ be the set of points $x\in X$ such that $\|A_n(x)^{\pm 1}\|\leq \kappa$ for all $n\in \Z$, and let ${\bfB}(A, \kappa)$ denote its maximal invariant subset\footnote{Invariant sets are assumed to be both forward and backward invariant unless stated otherwise.}. We establish the following theorem:

\begin{theoremain}\label{thm:A}
Let $d>1$ be an integer and $\alpha>0$. Suppose $\sigma:\Sigma_H\to \Sigma_H$ is a transitive subshift of finite type with positive topological entropy, and let $(\sigma,A)$ be a continuous $\sldr$ cocycle. If $(\sigma, A)$ admits no dominated splitting, then it can be $\cC^0$-approximated by $\cC^\alpha$-open sets $\cB$ of $\sldr$ cocycles with the following property: for each open set $\cB$, there exist constants $0<\gamma \leq \gamma' < h_{\mathrm{top}}(\sigma\big|_{\Sigma_H})$ and $\kappa>0$ depending on $\cB$ such that every cocycle $(\sigma,\tilde{A}) \in \cB$ satisfies
$$\gamma \leq h_{\mathrm{top}}(\sigma\big|_{{\bfB}(\tilde{A},\kappa)})\leq \gamma'.$$
\end{theoremain}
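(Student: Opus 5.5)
The plan has three stages: produce a stable local mechanism, turn it into a horseshoe of bounded orbits, and bound the entropy from above.

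\textbf{Step 1: reduce to the perturbative mechanism of \cite{NRR1}.} Since $(\sigma,A)$ admits no dominated splitting, the results of the first paper give a $\cC^0$-small perturbation $A'$ that is locally constant near a finite collection of periodic orbits and heteroclinic connections. I would arrange these data so that the fiber dynamics along finitely many admissible words $w_1,\dots,w_N$ (all starting and ending at a common symbol) realise a \emph{covering configuration}. Concretely, there should be a compact set $K\subset \sldr$ containing a neighbourhood of the identity, together with words whose products $A'_{w_i}$ satisfy
\[
K\subset \bigcup_i A'_{w_i}^{-1}\,\mathrm{int}(K),
\]
and similarly for the inverses. These are the \emph{multiple covering principle} hypotheses. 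The key point is that we need several distinct words covering $K$ with \emph{multiplicity}: for every $g\in K$ there should be at least two indices $i$ with $A'_{w_i}g\in \mathrm{int} K$. In \cite{NRR1} a single covering produced one bounded orbit. Here I would enlarge the family of words, using positive entropy of $\Sigma_H$ to get many distinct return words of the same length, and push the rotations/shears obtained from non-domination so that every point of $K$ is covered at least twice.

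\textbf{Step 2: the multiple covering principle yields entropy.} Given a covering of multiplicity $m\ge 2$ by words of length at most $L$, I would build bounded orbits greedily in both time directions. Starting from $g\in K$, at each stage we choose any admissible $i$ with $A_{w_i}g\in K$, and there are at least $m$ choices. This gives a tree of concatenations $w_{i_1}w_{i_2}\cdots$ with products staying in $K$, hence norms bounded by $\kappa=\max_K\|\cdot\|$ times a constant for intermediate times. A compactness and diagonal argument gives bi-infinite sequences, and their closure is an invariant subset of $\bfB(\tilde A,\kappa)$. The number of distinct words of length $n$ is at least $m^{n/L}$, which gives $\gamma=\log m/L$.

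Stability comes from openness: the covering condition involves interiors, so it persists under $\cC^0$-small changes of the products $A_{w_i}$. The products are finite, but the cocycle is evaluated at points of cylinders rather than at exact periodic points. I would handle this with $\cC^\alpha$ control. Hölder cocycles close to a locally constant one have products along a cylinder that vary by $O(\theta^{\alpha k})$ within depth-$k$ cylinders, and summability of these errors along concatenations (bounded distortion) keeps the orbit inside a slightly larger $K'$. This step is exactly where $\cC^\alpha$, rather than $\cC^0$, openness is needed, and I expect it to be the main technical obstacle: the errors must be absorbed uniformly over infinitely many concatenations. I would try a shrinking-target argument with $K_\epsilon\subset K$ and the margin given by the interior.

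\textbf{Step 3: upper bound $\gamma'<h_{\mathrm{top}}$.} The cocycle $A'$ is non-isometric: no dominated splitting and $\sldr$ imply some periodic point with non-trivial hyperbolic or parabolic product, and this persists on $\cB$. Take a periodic orbit $p$ whose product $B$ has $\|B^n\|\to\infty$, robustly. Any orbit in $\bfB(\tilde A,\kappa)$ cannot shadow $p$ for longer than some fixed time $T(\kappa)$, so the word $p^{T}$ is forbidden in the language of $\bfB$. Since $\Sigma_H$ is a transitive SFT and forbidding a word strictly lowers entropy (the measure of maximal entropy is Markov with full support), we get $h_{\mathrm{top}}(\bfB)\le h(\Sigma_H\setminus[p^T])<h_{\mathrm{top}}(\sigma)$. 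Uniformity over $\cB$ follows because $T$ depends only on $\kappa$ and a lower bound on the growth of $B$, both uniform on a small enough $\cB$.
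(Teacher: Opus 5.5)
Your architecture matches the paper's: a locally constant perturbation with a margin-stable covering mechanism, a branching tree of bounded orbits giving $\gamma=\frac1L\log m$, and a forbidden cylinder giving $\gamma'<h_{\rm top}$. Steps 1 and 3 take different routes, which are viable. However, Step 3 as written rests on a false claim.

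\textbf{Step 1.} You build multiplicity directly into the perturbation, realizing each generator on two distinct equal-length return words. The paper instead obtains a single covering and upgrades it through the multiple covering principle (Theorem~\ref{thm:multi-cov}). For Theorem A your shortcut is legitimate; the paper itself calls it the ``obvious way'' to get multiple covering.

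You should, however, name the input that makes it possible. It is not ``rotations/shears from non-domination'' as such. It is that Bonatti--D\'{\i}az--Pujals yields, after a $\cC^0$-perturbation, a periodic point with $A_n(p)=\id$. Only then can an arbitrary finite family of group elements be realized along distinct return words to $\underline{p}^{\ell_1}$, by spreading small perturbations over the identity blocks (Proposition~\ref{prop:density-covering}). The paper's detour costs more, but it proves a structural fact about every locally constant cocycle with covering.

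\textbf{Step 2.} You do not need backward covering. The paper builds only forward-bounded points on one local unstable set and passes to $\omega(X)$. There Lemma~\ref{lem:b-properties} gives full-orbit bounds, and a pigeonhole argument transfers the $m^{\lfloor N/L\rfloor}$ word count. This sidesteps the past/future interdependence your diagonal argument would have to handle for H\"older $\tilde A$. Note also that orbit closures land in $\bfB(\tilde A,\kappa^2)$, not in $\bfB(\tilde A,\kappa)$.

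\textbf{Step 3: the route.} You replace Viana's density theorem plus Theorem~\ref{thm:upperbound} (Kalinin--Sadovskaya rigidity and Lind's estimate) by forbidding a long periodic word at a robustly hyperbolic periodic orbit. This is more elementary and does work:
\begin{itemize}
\item for $\tilde A$ $\cC^0$-close to the locally constant $A'$, the factors along $\cylinder[\underline{p}^T]$ are uniformly close to the periodic sequence;
\item a cone argument then gives growth that is uniform in $\tilde A$;
\item uniqueness and full support of the Parry measure give $h_{\rm top}(\sigma|_{\Sigma_H(\underline{p}^T)})<h_{\rm top}(\sigma|_{\Sigma_H})$.
\end{itemize}

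\textbf{Step 3: the error.} Your source for the hyperbolic orbit is wrong. Absence of domination does not produce periodic points with hyperbolic or parabolic products. A constant $\mathrm{SO}(d)$-valued cocycle is non-dominated and all its periodic products are orthogonal. Your own $A'$ has identity product at $p$. Moreover, parabolic products are destroyed by small perturbations, so ``this persists on $\cB$'' fails for them.

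\textbf{The fix.} You must insert the hyperbolic orbit by hand. For example, once the multiple covering is in place for the locally constant $A'$, perturb the finitely many matrices defining it so that the product at $p$ becomes hyperbolic with eigenvalues close to $1$. The covering involves finitely many products over finitely many words with an open margin, so it survives. With this repair, your upper bound holds on a $\cC^0$-neighbourhood of $A'$ itself. The paper's detour through a positive-exponent cocycle $A''$ then becomes unnecessary.
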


 \medskip

We proceed with a few remarks regarding Theorem \ref{thm:A}. First, in view of the well-known relationship between topological entropy and Hausdorff dimension of closed invariant subsets of the shift space, this result implies that the set of points with fiberwise bounded full orbit has strictly positive Hausdorff dimension. See Remark \ref{rmk:Hausdorff-dim} for further details.

Furthermore, points with fiberwise bounded orbits can be viewed as a generalization of periodic points that exhibit elliptic behavior along the fiber upon return. From this perspective, Theorem \ref{thm:A} continues the line of inquiry established in \cite{NRR1}, providing a refined characterization of elliptic versus hyperbolic behavior for cocycles. Additionally, because the set ${\bfB}(A,\kappa)$ is compact and invariant under the dynamics, combining Theorem \ref{thm:A} with the variational principle for entropy establishes the $\cC^{\alpha}$-stable existence of ergodic invariant measures with a {\it vanishing Lyapunov spectrum}. 
This advances the existing line of research concerning the robust existence of non-hyperbolic measures \cite{BochiBonattiDiaz2014, bochi2016robust, diaz2019entropy, bonatti2021robust, diaz2022variational, Martha2023}. Whereas the existing works often have restrictions on the dimension of the fiber, may provide measures of zero entropy (such as periodic measures), or guarantee only a single zero Lyapunov exponent,  our approach provides a unified framework for fibers of any dimension, ensuring the existence of positive-entropy invariant measures with a completely vanishing Lyapunov spectrum. This work also contributes to other lines of research, including \cite{Yoccoz2004, Avila_Bochi_Yoccoz, BochiRams_JMD} for linear cocycles generated by finite families of $2\times 2$ matrices, and \cite{mane1982ergodic, Bochi02, Bochi-Viana, ACW} on the genericity of zero exponents in the absence of hyperbolicity.

\medskip

While Theorem \ref{thm:A} establishes both an upper and a lower bound for the entropy of the set of bounded orbits, the arguments and tools required for each bound must be addressed separately. The lower bound relies on a new general phenomenon unveiled in this paper, which we encapsulate in the ``\emph{Multiple Covering Principle}''. We believe this principle is of independent interest; see, for instance, the recent work \cite{NZ1} regarding the role of multiple covering conditions in the stable intersection of Cantor sets. Indeed, the transition from the mere stable existence of bounded orbits \cite{NRR1} to the robust dimensional estimates presented in Theorem \ref{thm:A} fundamentally relies on this principle.

\begin{theoremain}[Multiple Covering Principle]\label{thm:B}
Let $(\sigma, A)$ be a locally constant linear cocycle. If $(\sigma, A)$ satisfies the covering condition, then it satisfies the multiple covering condition.
\end{theoremain}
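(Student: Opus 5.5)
The covering condition is not defined before the statement, so I will work with the version I expect from \cite{NRR1}. A locally constant cocycle $(\sigma,A)$ is determined by finitely many matrices $A_w$, one for each admissible word $w$ of a fixed length. It satisfies the \emph{covering condition} if there are a compact set $K\subset \mathrm{SL}(d,\R)$ with nonempty interior and finitely many admissible words $w_1,\dots,w_m$ such that
$$K\subset \bigcup_{i} \operatorname{int}\big(A_{w_i}^{-1}K\big),$$
with concatenations kept admissible through transitivity. In words, every element of $K$ is mapped back into the interior of $K$ by some word. The \emph{multiple covering condition} asks for the same with multiplicity at least two: there are words $u_1,\dots,u_N$ such that every $g\in K$ satisfies $A_{u_i}g\in\operatorname{int}K$ for at least two distinct indices $i$, with no $u_i$ a prefix of another. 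The strategy is to manufacture a second, independent covering from the first by exploiting the positive entropy of the base, and then to take words of a common length so that the two coverings combine into a double one.

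\textbf{Step 1: make the covering margin uniform.} By compactness and the openness of the covering, there is a Lebesgue-type number $\delta>0$. For every $g\in K$ there is an $i$ such that the $\delta$-neighbourhood of $A_{w_i}g$ lies in $K$. Iterating this gives, for each $g$, an infinite itinerary $i_1,i_2,\dots$ along which the products stay in $K$. So the set of ``good'' codings is nonempty from every starting point.

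\textbf{Step 2: produce branching.} Since $h_{\mathrm{top}}>0$ and the shift is transitive, there are two distinct loops $p\neq q$ at the same state with equal length. Then $A_p$ and $A_q$ are fixed matrices. Consider the enlarged set $K' = \bigcup_{j\le L} A_{p}^{j}K \cup\dots$. This does not obviously work, so my first concrete attempt is to conjugate. Both $A_pK$ and $A_qK$ are compact sets in $\mathrm{SL}(d,\R)$. From each of them the original covering can be restarted, provided $K$ is replaced by a larger compact set $\widehat K$ that is itself covered. To get this, apply the covering condition to the compact set $\widehat K=K\cup A_pK\cup A_qK$. The key claim is that covering of $K$ propagates to any compact set $C$ satisfying $C\subset\bigcup_{\text{words } v}A_v^{-1}\operatorname{int}K$. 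The set $A_pK$ satisfies this by transitivity, since there is a word $r$ with $A_rA_p$ sending $K$ close enough that the covering applies. I expect this step to be the main obstacle: finding a return word $r$ for which $A_rA_pK\subset\operatorname{int}K$. My first attempt would use the fact that $K$ is covered by finitely many interiors, together with compactness of the orbit of $K$ under finite words. If that fails, I would instead choose $K$ from the start invariant under a neighbourhood argument, using the freedom in the earlier construction.

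\textbf{Step 3: assemble.} Given $g\in K$, follow a covering word $w_{i}$, then branch through either $p$ or $q$, then return via $r_p$ or $r_q$ into $\operatorname{int}K$. Padding with a common periodic loop equalises the lengths, so the resulting words $u=w_ipr_p$ and $u'=w_iqr_q$ are distinct, have the same length, and are non-prefix. Both return $g$ to $\operatorname{int}K$, which gives multiplicity two. By openness and compactness there are finitely many such words, and this yields the multiple covering condition.
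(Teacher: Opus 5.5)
There is a genuine gap, and it is exactly where you flagged it. Step~2 needs that, after the branching detour $A_p$ (or $A_q$), the product can be brought back into $\operatorname{int}K$. The covering condition only controls points of $K$, and $A_pg$ for $g\in K$ need not lie in $K$. So ``restarting the covering from $A_pK$'' is not justified, and transitivity of the base only supplies connecting words, not control of their matrices.

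The uniform version you aim for, a single return word $r$ with $A_rA_pK\subset\operatorname{int}K$, can never hold in $\sldr$, $d\geq 2$. Left multiplication preserves Haar measure $\mu$, so $gK\subset\operatorname{int}K$ gives $\mu(\operatorname{int}K\setminus gK)=0$. That set is open, so it is empty, and hence $gK=\operatorname{int}K$. This makes $\operatorname{int}K$ a non-empty compact open subset of a connected non-compact group, which is absurd.

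The same problem returns in Step~3. Every connecting word inserted ``through transitivity'', and every padding loop used to equalise lengths, is another uncontrolled detour. For a cocycle depending on positions in $\ldbrack -k,k\rdbrack$ over an SFT you cannot concatenate covering words freely. This is why the paper works with transitions $T(\ua,\ub)$ between elements of $\cA'$.

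The missing idea is to build slack into the set instead of searching for return words.
\begin{itemize}
\item Replace $\cU$ by the right-thickening $\cV=\cU\cB_{R_0}$. By Remark~\ref{rmk:covering-improvement} and compactness (Proposition~\ref{prop:uniform-delta}), the covering holds for every $\cU\cB_R$ with $R\in[1,R_0]$, with a uniform gain: $\overline{\cU\cB_R}\subset\cT_{\ua,\cA'}^{-1}\,\cU\cB_{R-\delta}$.
\item Iterate this about $R_0/\delta$ times, using $\cT_{\ub,\cA'}\cT_{\ua,\ub}\subset\cT_{\ua,\cA'}$. Then every $v\in\overline{\cV}$ is sent into $\cU\cB_1$ along some $\uw\in T(\ua,\ub)$ with $\ub\in\cA'$.
\item Choose $R_0$ large in terms of $\|A^{\pm1}\|^{2\ell+1}$ (Proposition~\ref{prop:cover-smaller-set}). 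Then any detour of length at most $2\ell+1$ maps $\cU\cB_1$ back into $\cV$.
\item Take as detours the two paths $\uw^1(\ub)$, $\uw^2(\ub)$ from $\ub$ through a vertex $\ud_0$ of out-degree two in the graph on $\cA_H^{\times(2k+1)}$, and on to $\cA'$. Both $\uw\vee\uw^1(\ub)$ and $\uw\vee\uw^2(\ub)$ send $v$ into $\cV$, and they form an anti-chain because they diverge at $\ud_0$. No equal lengths are needed.
\item A finite subcover of $\overline{\cV}$ then gives the sets $\cU_\imath$ and the singletons $\Lambda^\jmath_\imath(\ua)$.
\end{itemize}
Your Step~1 margin is a fixed $\delta$ used once. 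Compounding it over many covering steps is precisely what absorbs the branching and connecting words.
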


To provide some context, the \emph{covering condition} for Iterated Function System (IFS) generated by a finite set $\cG\subset \gldr$ (which is equivalent to a locally constant cocycle over a full shift depending on the zeroth position) simply means there exists a non-empty open set $\cU \subset \gldr$ with compact closure such that $\overline{\cU} \subset \cG^{-1}\cU$. This is a highly robust property that allows for the construction of at least one trajectory for which all fiberwise iterations remain in $\cU$. For the precise definitions of the covering and multiple covering conditions in the general case, see Section \ref{sec:multiple-covering} (specifically, Definitions \ref{def:mutiple-covering} and \ref{def:gen-cov}). This \emph{multiple covering condition} which, roughly speaking, indicates the existence of several layers of covering, forces the existence of an exponentially growing number of independent itineraries, all of which exhibit uniformly bounded trajectories along fibers. Consequently, the set of base points with uniformly bounded fiberwise full orbits must have positive Hausdorff dimension, and the dynamics restricted to this set must carry positive entropy.

The following theorem states the core result for the lower bound, asserting that the multiple covering condition immediately implies positive topological entropy for the set of bounded orbits, given a certain bound.

\begin{theoremain}\label{thm:C}
Let $d\in \N$ and $\alpha>0$. Suppose $(\sigma,A)$ is a locally constant $G$-cocycle over a transitive subshift of finite type $\sigma:\Sigma_H\to \Sigma_H$ with positive topological entropy, where $G\subset \mathrm{GL}(d, \R)$ is a closed subgroup.
If $(\sigma,A)$ satisfies the multiple covering condition of $m$ layers for some $m>1$, then there exist $\varepsilon,\kappa,\gamma>0$ such that for every $\alpha$-H\"older cocycle $(\sigma,\tilde{A})$ with $d_{\cC^{\alpha}}(A,\tilde{A})<\varepsilon$, the set ${\bfB}(\tilde{A},\kappa)$ is non-empty, and moreover,
$$ h_{\mathrm{top}}(\sigma|_{{\bfB}(\tilde{A},\kappa)})\geq \gamma. $$
\end{theoremain}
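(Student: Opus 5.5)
We will prove Theorem \ref{thm:C} by turning the $m$ layers of covering into a free choice among at least two admissible words at each stage, and then showing that these choices give a full subshift, up to bounded distortion, sitting inside ${\bfB}(\tilde A,\kappa)$. Since Definition \ref{def:mutiple-covering} is not reproduced above, we use it in the following working form. There is a bounded open set $\cU\subset G$ with compact closure and a length $N$. For every $g\in\overline{\cU}$ there are $m$ distinct admissible words $w_1,\dots,w_m$ of length $N$ (in the shift, and compatible with concatenation) such that $A_{w_i}\,g\in\cU$. The analogous statement holds backwards, with $A_{w}^{-1}$. By compactness of $\overline{\cU}$ and openness of $\cU$, we get a uniform margin. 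There is $\delta>0$ such that the $\delta$-neighbourhood of $\overline{\cU}$ is still covered in this way, with $A_{w_i}g$ landing in $\cU$ itself.

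\emph{Step 1 (robustness under Hölder perturbation).} Bounded distortion for $\alpha$-Hölder cocycles is the key ingredient. Fix a cylinder $[w]$ of length $N$ and points $x,y\in[w]$. Because $\tilde A$ is $\alpha$-Hölder with $d_{\cC^\alpha}(A,\tilde A)<\varepsilon$, the product $\tilde A_N(x)$ is $C\varepsilon$-close to the constant $A_w$, uniformly in $x$. We choose $\varepsilon$ so that $C\varepsilon\,\sup_{\overline{\cU}}\|g\|<\delta$. Then for each $g\in\overline{\cU}$ and each of the $m$ words $w_i$ chosen for $g$, every point $x\in[w_i]$ satisfies $\tilde A_N(x)g\in\cU$.

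\emph{Step 2 (tree of itineraries and the invariant set).} Start from $g_0=\mathrm{Id}$; after translating $\cU$ we may assume $\mathrm{Id}\in\cU$. Recursively, at stage $k$ the current matrix $g_k=\tilde A_{kN}(x)\in\cU$ depends only on the chosen words. We pick any of the $m\ge2$ allowed words $w^{(k+1)}$, and Step 1 keeps $g_{k+1}\in\cU$. Doing the same backwards produces a closed set $\Lambda$ of bi-infinite concatenations. Along these concatenations, $\tilde A_{kN}(x)^{\pm1}$ stays in $\cU$ for all $k\in\Z$. Intermediate times $n$ cost at most a factor $(\sup\|\tilde A^{\pm1}\|)^N$, so $\Lambda\subset \mathrm{B}^\pm(\tilde A,\kappa)$ with $\kappa$ independent of $\tilde A$ in the $\varepsilon$-ball.

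The tree is not shift-invariant, because the choices depend on $x$. We handle this by taking $\Lambda'=\bigcup_{j<N}\sigma^j\Lambda$. Boundedness for shifted points follows from the cocycle identity $\tilde A_n(\sigma^j x)=\tilde A_{n+j}(x)\tilde A_j(x)^{-1}$, at the cost of squaring $\kappa$. We also need $\sigma^{N}\Lambda\subset\Lambda$. This requires the construction to be made "Markov": the set of admissible continuations from state $g$ should depend only on a neighbourhood of $g$ and not on the past. That is exactly what the uniform margin $\delta$ provides once the recursion is phrased as "the current state lies in $\cU$". So we define $\Lambda$ directly as the set of all $x$ such that the sequence $\tilde A_{kN}(x)$ satisfies $\tilde A_{kN}(x)\in\cU$ for every $k\in\Z$, where we normalise at a base point. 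The set $\Lambda'$ is then contained in ${\bfB}(\tilde A,\kappa)$.

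\emph{Step 3 (entropy).} Every admissible path of length $kN$ through the tree gives a distinct $(kN,\cdot)$-separated orbit, and there are at least $m^k$ such paths. Hence
$$h_{\mathrm{top}}(\sigma|_{{\bfB}(\tilde A,\kappa)})\ \ge\ \frac{\log m}{N}=:\gamma>0.$$
The main obstacle is the invariance issue in Step 2. The count of $m^k$ paths is immediate, but placing them in a compact, fully invariant set with a uniform $\kappa$ needs care. The normalisation of $\tilde A_{kN}(x)$ at time $0$ makes the condition at $\sigma^N x$ become $g\mapsto g\,\tilde A_N(x)^{-1}$. To handle this, we will use right-invariance of the set of "good" states, for example by replacing $\cU$ with the saturated set $\{h g^{-1}: g,h\in\cU\}$ and using that covering is preserved under right multiplication. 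If that fails, we fall back on counting via $\sup_j h_{\mathrm{top}}$ over compact forward-invariant pieces together with the variational principle.
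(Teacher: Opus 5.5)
There is a genuine gap, and it sits exactly where the Hölder hypothesis has to enter.

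\textbf{The missing bounded-distortion estimate.} In Step 2 you assert that $g_k=\tilde A_{kN}(x)$ ``depends only on the chosen words.'' That is true for the locally constant $A$, but false for $\tilde A$. Each factor $\tilde A(\sigma^t x)$ depends on all coordinates of $x$, including the future ones not yet chosen. So the word you select at stage $k+1$, which is a function of $g_k$, is not constant on the current cylinder. Nothing then guarantees that the nested cylinders contain a point following the prescribed branch.

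Your Step 1 does not address this. It compares $\tilde A_N(x)$ with $A_w$ on a single block, which is only a $\cC^0$ estimate; you introduce $x,y\in[w]$ but never compare them. In fact nothing in Steps 1--3 uses more than $d_{\cC^0}(A,\tilde A)<\varepsilon$. If the argument were correct it would give $\cC^0$-stability of bounded orbits, which fails by Bochi's result quoted in the introduction.

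What is missing is a bounded-distortion estimate along the tree, uniform in the depth:
\begin{itemize}
\item Fix the past by working in $W^u_{loc}(x_0)$. If $x,y$ agree on all coordinates $\le n+k$, then for $0\le t<n$ you have $A(\sigma^tx)=A(\sigma^ty)$.
\item The $\cC^\alpha$-smallness of $\tilde A-A$ then gives $\|\tilde A(\sigma^tx)-\tilde A(\sigma^ty)\|\le\varepsilon\,2^{-\alpha(n-t)}$.
\item The matrix $\tilde A_n(y)\tilde A_n(x)^{-1}$ is the ordered product of $D_t=\tilde A(\sigma^ty)\tilde A(\sigma^tx)^{-1}$, each conjugated by $\tilde A_{n-t-1}(\sigma^{t+1}x)$.
\item These partial products are uniformly bounded by the inductive hypothesis: products at branching times lie in $\cU\cB_R$, and gaps are at most $L$.
\item The resulting geometric series gives $\|\tilde A_n(y)\tilde A_n(x)^{-1}-\id\|\le C\varepsilon$ independently of $n$.
\end{itemize}
Only with this can the index $\imath$ and the next words be chosen uniformly on each cylinder. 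The margin $\delta$ of Proposition \ref{prop:uniform-delta} then absorbs both this distortion and the one-block error. This is the step the paper delegates to the Hölder estimates of Proposition 5.6 in the first paper of the series.

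\textbf{The backward covering is not in the hypothesis.} Your working form of the hypothesis includes a backward covering by $A_w^{-1}$. Definition \ref{def:mutiple-covering} does not provide this. It gives only forward covering, by anti-chain words of varying lengths that depend on the state $\ua\in\cA'$ and on $\imath$, and you give no argument for the backward version.

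You do not need it. Build only the forward tree inside $W^u_{loc}(x_0)$, which gives a Cantor set $X\subset\mathrm{B}^+(\tilde A,\kappa_0)$. Then pass to $\omega(X)$, which is closed, fully invariant, and contained in $\bfB(\tilde A,\kappa_0^2)$ by Lemma \ref{lem:b-properties}. The bound $\log m/L$, with $L$ the maximal covering-word length (hence the maximal gap between branchings), transfers to $\omega(X)$. Indeed, $\sigma^T(X)$ exhibits at least $m^{\lfloor N/L\rfloor}$ words of length $N$ for every $T$, and a pigeonhole argument over $T\to\infty$ puts a fixed such family into the language of $\omega(X)$.

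This also removes your invariance difficulty, so the saturation of $\cU$ by $\{hg^{-1}\}$ is unnecessary, and it is not justified as stated. Even with bi-infinite bounded points, the cocycle identity you quote already places their whole orbit closure in the closed set $\mathrm{B}^{\pm}(\tilde A,\kappa^2)$, hence in $\bfB(\tilde A,\kappa^2)$.
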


 \medskip
Translating this covering property into the stability of linear cocycles requires careful estimates that strongly depend on the H\"older regularity of the underlying cocycles. By employing delicate H\"older estimates similar to those in \cite{NRR1}, we establish that the behavior generated by multiple covering is indeed stable in the $\cC^\alpha$ topology for $\alpha > 0$. The necessity of this H\"older regularity is strict: a recent result by Bochi \cite{Bochi2026} demonstrates that one can \emph{remove all quasi-conformal orbits via $\cC^0$ perturbations}, proving that such stability fails in the $\cC^0$ topology. 

We also emphasize that Theorem \ref{thm:C} holds for every closed subgroup of $\mathrm{GL}(d, \R)$, not only for $\mathrm{SL}(d, \R)$ as in Theorem \ref{thm:A}. Indeed, for volume-preserving cocycles, bounded orbits are equivalent to quasi-conformal orbits, meaning the condition numbers of the products $A_n$
 along a full orbit remain bounded. However, these concepts differ in general. While every point with a fiberwise bounded orbit clearly has a quasi-conformal orbit, the converse is generally false unless the cocycle takes values in a subgroup of volume-preserving matrices. For a general cocycle, one can associate a volume-preserving variant by factoring out an appropriate power of the determinant. With this construction, any quasi-conformal orbit of the initial cocycle corresponds to a fiberwise bounded orbit of its volume-preserving associate, ensuring that all our theorems have a counterpart for quasi-conformal orbits.

\medskip
Given that the set of fiberwise bounded orbits can support measures with positive entropy, it is natural to ask: \emph{can the measures supported on bounded orbits achieve the maximal entropy of the SFT?} The upper bound in Theorem \ref{thm:A} addresses this question. It is a consequence of the following rigidity theorem, which demonstrates that this extreme scenario cannot occur unless the cocycle itself is globally rigid, that is, the action of the cocycle is isometric with respect to a continuously varying inner product on the fibers (see Section \ref{sec:rigidity} for equivalent definitions).
{
\begin{theoremain}\label{thm:upperbound}
    Let $d>1$ be an integer and $\alpha>0$. Suppose $(\sigma,A)$ is an $\alpha$-H\"older continuous $\gldr$ cocycle over a transitive subshift of finite type $\sigma:\Sigma_H\to \Sigma_H$ with positive topological entropy. Assume that $(\sigma,A)$ is not continuously conjugate to an isometric cocycle.  Then for each $\kappa>0$ there exist a $\cC^0$ neighborhood $\cU$ of $(\sigma,A)$ and a constant $\eta > 0$
such that every $\tilde{A} \in\cU$,
\[h_{\rm top}(\sigma\big|_{{\bfB}(\tilde{A},\kappa)})\leq h_{\rm top}(\sigma\big|_{\Sigma_H})-\eta.\]
\end{theoremain}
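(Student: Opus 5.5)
The idea is to find a finite "forbidden pattern" mechanism. Non-isometric plus Hölder means, by a Livšic-type rigidity result for cocycles over SFTs (Kalinin, Sadovskaya), that not all periodic data are conjugate into a compact group. I will use the contrapositive. If every periodic orbit had all iterates $A_n(p)^{\pm1}$ uniformly bounded, with one constant over all periodic points, then the cocycle would be uniformly quasiconformal with bounded distortion, and hence continuously conjugate to an isometric cocycle. So there is a periodic point $p$, of period $k$, such that $\|A_{nk}(p)\|\to\infty$. More precisely, $A_k(p)$ either has an eigenvalue off the unit circle or a nontrivial Jordan block on it; in both cases the norms are unbounded.

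Fix $\kappa$ and choose $N$ with $\|A_{Nk}(p)\|>2\kappa$. Let $w$ be the word of length $Nk$ given by the orbit of $p$, repeated $N$ times. By continuity, and possibly after lengthening by a bounded factor to absorb the local variation of $A$ along the cylinder, every point $x$ in the cylinder $[w]$ satisfies $\|A_{Nk}(x)\|>2\kappa$, or at least $>\kappa$ after accounting for the $\cC^0$ error. Here is the point where only $\cC^0$ closeness is needed. For $\tilde A$ in a small $\cC^0$ neighbourhood $\cU$ (depending on $N$, hence on $\kappa$), the product of $Nk$ matrices along $[w]$ still has norm $>\kappa$, because finitely many factors are uniformly close. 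Consequently, no point of ${\bfB}(\tilde A,\kappa)$ can visit $[w]$ at any time: if $\sigma^j x\in[w]$, then $\|\tilde A_{Nk}(\sigma^j x)\|\le \|\tilde A_{j+Nk}(x)\|\,\|\tilde A_j(x)^{-1}\|\le\kappa^2$. To handle this I will either replace the threshold by $\kappa^2$ from the start, choosing $N$ so that the norm exceeds $\kappa^2+1$, or simply use the cocycle identity. In either case ${\bfB}(\tilde A,\kappa)$ is contained in the subshift $\Sigma_{H,w}$ of $\Sigma_H$ that forbids the word $w$.

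It then remains to show $h_{\rm top}(\Sigma_{H,w})<h_{\rm top}(\Sigma_H)$. This is the standard fact that for a transitive SFT with positive entropy, forbidding any allowed word strictly drops entropy. The reason is that the measure of maximal entropy (Parry measure) is unique and gives $[w]$ positive mass, so it is not supported on $\Sigma_{H,w}$. Hence any invariant measure on $\Sigma_{H,w}$ has entropy strictly less than $h_{\rm top}$. By compactness and upper semicontinuity of entropy on SFTs, the supremum over such measures is attained and is strictly smaller. Set $\eta=h_{\rm top}(\Sigma_H)-h_{\rm top}(\Sigma_{H,w})>0$. This value depends only on $w$, hence only on $\kappa$, and not on $\tilde A\in\cU$, which gives the uniformity.

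The main obstacle is the first step: extracting the periodic orbit with unbounded growth from the hypothesis that the cocycle is not continuously conjugate to an isometric one. I would invoke the rigidity statements of Section \ref{sec:rigidity}, which I expect give equivalent characterizations such as uniformly bounded $\|A_n(x)\|\,\|A_n(x)^{-1}\|$ over all $x$ and $n$. Suppose instead that all periodic orbits had bounded products with a uniform constant. Then density of periodic orbits together with Hölder bounded distortion (Anosov closing along the SFT) would make the products uniformly bounded along all orbits, and hence the cocycle would be conjugate to an isometric one. Non-uniformity across different periodic points is the subtle case. Even then, unbounded $\|A_n(x)\|$ along some orbit segment suffices, because I only need a single word $w$ along which the product has norm exceeding $\kappa^2+1$ uniformly on its cylinder. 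Such a word exists whenever $\sup_{x,n}\|A_n(x)\|=\infty$, which is exactly the failure of rigidity. This last formulation avoids periodic points altogether, so I would use it.
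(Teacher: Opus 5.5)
Your final route is essentially the paper's proof. You use Kalinin--Sadovskaya to get $\sup_{x\in\Sigma_H,\,n\in\Z}\|A_n(x)\|=\infty$ for a non-isometric H\"older cocycle, make a large product $A_N$ robust on a cylinder $[w]$ and in a $\cC^0$ neighbourhood so that ${\bfB}(\tilde A,\kappa)\subset\Sigma_H(w)$, and then use the entropy drop from forbidding $w$. The paper differs only in starting from a point with dense forward orbit and in using Lind's quantitative gap estimate, where you use uniqueness of the Parry measure plus upper semicontinuity of entropy; your version is equally valid.

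Two small cautions. First, for $\mathrm{GL}(d,\R)$ you must track $\max(\|A_N\|,\|A_N^{-1}\|)$, or let $n$ range over $\Z$, as the paper does: for example, $A\equiv\frac12\id$ is non-isometric yet has bounded forward norms. Second, your opening claim that some periodic orbit has unbounded growth does not follow from the failure of a uniform bound, so you are right to discard it.
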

} 
\medskip

{We make a few remarks regarding this result. First, the same statement holds for $\sldr$ cocycles. Due to the equivalence between fiberwise bounded orbits and quasi-conformal orbits for $\sldr$ cocycles, this immediately implies a gap for the entropy of the shift restricted to the set of quasi-conformal orbits, when the cocycle is not continuously conjugate to an isometric one. Furthermore, the assertion for $\sldr$ cocycles is equivalent to the analogous statement for $\gldr$ cocycles and quasi-conformal orbits. Specifically, if a $\gldr$ cocycle over a base dynamics as in Theorem \ref{thm:upperbound} does not preserve a continuously varying family of conformal structures on the fibers, then the entropy of the restriction to the set of quasi-conformal orbits stably exhibits a uniform gap (in the sense of the theorem) to the full entropy of the system.}

The upper bound in Theorem \ref{thm:A} follows from Theorem \ref{thm:upperbound}, combined with the genericity of non-vanishing Lyapunov exponents for volume-preserving H\"older cocycles \cite{Viana08},  and the rigidity achieved when the cocycle is bounded over all orbits \cite{Kalinin_JMD,KalininSadovskaya2018}. Note that \emph{the H\"older regularity in this theorem is necessary}: if the cocycle is only assumed to be continuous, one can construct examples for which every orbit is fiberwise bounded, yet the cocycle is not conjugate to an isometric one (see Example \ref{rmk:nonrigid-full-entropy}). While global boundedness implies rigidity for continuous cocycles over minimal systems \cite{BochiNavas,CoronelNavasPonce,reshadat_Thesis}, this is not true for cocycles over systems such as SFTs of positive entropy. This distinction is closely related to Liv\v{s}ic-type results and Gottschalk-Hedlund theorems, which deduce rigidity from bounded fiberwise data \cite{Kalinin2011,Quas}. 

It is also interesting to compare these results with large deviation estimates for the norm of random products of matrices, which correspond to the IFS generated by finite families of matrices. When the top Lyapunov exponent of the associated locally constant cocycle is positive with respect to a Bernoulli measure, these estimates guarantee a strict upper bound on the norm of the random products, bounded away from what is expected from the exponent. In particular, this implies an upper bound for the entropy of infinite sequences with bounded growth when the top Lyapunov exponent of the cocycle is positive; see, for instance, \cite[Chapter 14]{BQ}.

Finally, we note that while our results provide strict lower and upper bounds for the entropy of the set of fiberwise bounded orbits for certain open sets of cocycles, they do not determine its exact value. In light of this, it would be highly interesting to develop a multifractal analysis for this setting, similar to the analysis developed for Lyapunov exponents. This could yield explicit formulas or more delicate information, such as the continuity of the entropy or the Hausdorff dimension of the set of fiberwise bounded orbits with respect to the system’s parameters.

\subsection*{Organization of the paper} 
The paper is organized as follows. Section~\ref{sec:prelim} introduces the primary definitions and notation used throughout the paper. Section~\ref{sec:multiple-covering} reviews the covering condition for semigroup actions and linear cocycles, generalizing it to introduce the core concept of multiple covering. Furthermore, we establish the multiple covering principle within this section. Section~\ref{sec:entropy} presents the central argument demonstrating that the multiple covering condition yields a lower bound for the entropy of the set of fiberwise bounded orbits. Section~\ref{sec:rigidity} investigates rigidity phenomena achieved when the set of fiberwise bounded orbits of a certain bound attains full topological entropy.
 In Section~\ref{sec:proof-main-thm}, we employ the tools developed in the preceding sections  to complete the proof of Theorem~\ref{thm:A}. Finally, Section~\ref{sec:questions} concludes the paper with further remarks and open questions for future research.
\subsection*{Acknowledgement}The authors would like to thank Jairo Bochi and Salman Beigi for useful conversations and comments. Part of this work was written while  M.N. and Z.R. were visiting BICMR, Peking University. This work is partially supported by INSF grant no. 4041991.
\section{Preliminaries and Notations} 
\label{sec:prelim}
For every non-empty set $\cS$, we denote
$$
\cS^{\times n} := \underbrace{\cS \times \cdots \times \cS}_{n \text{ times}}, \quad \text{and} \quad \cS^{\mathrm{fin}} := \bigcup_{k \in \N} \cS^{\times k}.
$$

We say a word $\uw \in \cS^{\mathrm{fin}}$ has length $n$, denoted by $|\uw| = n$, to indicate that $\uw \in \cS^{\times n}$. 

Let $G$ be a group. For every $\uw = (g_1, \ldots, g_n) \in G^{\mathrm{fin}}$, we define \emph{the product map},
$$
\cP_{[\uw]} := g_n \cdots g_1 \in G.
$$
For any subset $\cW \subset G^{\mathrm{fin}}$, let $\cP_{\cW} := \{\cP_{[\uw]} : \uw \in \cW\}$. Furthermore, for non-empty subsets $\cS_1, \ldots, \cS_n \subset G$, we denote 
$$
\cS_n \cdots \cS_1 := \cP_{\cS_n \times \cdots \times \cS_1}.$$
Consistent with this, for every $n\in \N$, every $\cS\subset G$, and $s\in G$, we set $\cS^n := \cP_{\cS^{\times n}}$, $\cS^{-1} := \{s^{-1} : s \in \cS\}$, and $s\cS := \{s\} \cS$.

\label{sec:pre}\subsection{Basics of linear cocycles} Let $d\in \N$ and $G$ be a subgroup of $\gldr$, acting linearly on $\R^d$. A \emph{$G$-cocycle} over a homeomorphism $f$ of a compact metric space $X$ is determined by a continuous map $A : X \to G$, which defines a skew-product homeomorphism $F : X \times \R^d \to X \times \R^d$ given by
\[F(x,v) = (f(x), A(x)v).\] 
We denote such a cocycle by $(f,A)$. For $n \in \Z$, the $n$-th iterate of the cocycle is denoted by $(f^n, A_n)$, where $A_0(x)$ is the identity matrix for all $x \in X$, and for $n \in \N$,
\begin{equation*}
    A_n(x) := A(f^{n-1}(x)) \cdots A(f(x)) A(x)\quad \text{and}\quad 
    A_{-n}(x) := \big(A_n(f^{-n}(x))\big)^{-1}.
\end{equation*}
Note that the continuity of the map $A$ is defined with respect to the metric on $\gldr$ 
given by
$
\|P_1-P_2\|+\|P_1^{-1}-P_2^{-1}\|
$, where $\|\cdot\|$ denotes the operator norm on matrices, $\|P\|:=\sup_{\|v\|=1} \|Pv\|$, induced by the standard Euclidean norm on $\R^d$.

\medskip

The main classes of cocycles we consider are those for which $G = \sldr$ or $G = \gldr$. One may consider both $\cC^0$ and  H\"older regularities and topologies on the space of linear cocycles over $f$. 
For $\alpha> 0$, the cocycle $(f,A)$ is an \emph{$\alpha$-H\"older cocycle} if there exists  $C>0$ such that for all $x,y \in X$, $\|A(x)-A(y)\| \leq C\, \dist(x,y)^{\alpha}$.   The $\cC^0$ topology on this space is the topology induced by the metric 

\begin{align}
    d_{\cC^0}(A,A')
:=
\sup_{x \in X}
\left(
\|A(x)-A'(x)\|
+
\|A(x)^{-1}-A'(x)^{-1}\|
\right).
\end{align}

This metric makes the space of all continuous linear cocycles over $X$ a Baire space. 

We also consider the H\"older topology on the space of $\alpha$-H\"older cocycles. To recall this, first, for every $\eta>0$, the local $\cC^{\alpha}$ semi-norm and local $\cC^{\alpha}$ norm of an $\alpha$-H\"older cocycle $A$ are defined by 
\begin{align}
 |A|_{\cC^{\alpha},\eta} &:=
\sup_{0 < \dist(x,y) < \eta}
\frac{
\|A(x)-A(y)\|
+
\|A(x)^{-1}-A(y)^{-1}\|
}
{\dist(x,y)^{\alpha}},\\~& \notag\\
\|A\|_{\cC^{\alpha},\eta} &:= \|A\|_{\cC^{0}} + |A|_{\cC^{\alpha},\eta}. 
\end{align}
Next, the global $\cC^{\alpha}$ semi-norm and norm are defined by 
\begin{align}
     |A|_{\cC^{\alpha}} & := |A|_{\cC^{\alpha}, \mathrm{diam}(X)},\\
\|A\|_{\cC^{\alpha}} & := \|A\|_{\cC^{0}} + |A|_{\cC^{\alpha}}. 
\end{align}

One can verify that for every pair of $\alpha$-H\"older cocycles $(f,A), (f,B)$, the inequality $|A+B|_{\cC^{\alpha}} \leq |A|_{\cC^{\alpha}} + |B|_{\cC^{\alpha}}$ holds. Hence, the norm $\|\cdot\|_{\cC^{\alpha}}$ induces a metric on $\cC^{\alpha}$ given by 
\begin{align}
    d_{\cC^{\alpha}}(A,B) := \|A-B\|_{\cC^{\alpha}}.
\end{align}

\begin{remark}
We remark that the $\cC^0$ and the H\"older metrics on the space of linear cocycles used in the present paper defer apparently from the ones used in \cite{NRR1}, where the norms were defined without the inverse terms. However, the topologies induced by the corresponding metrics are the same. Indeed, for a given continuous cocycle $(f,A)$,
\[\|A(x)^{-1}-A(y)^{-1}\|\leq  \|A(x)^{-1}\| . \|A(y)^{-1}\|.\|A(x)-A(y)\|\leq K^2 \|A(x)-A(y)\|,\]
where $K=\sup_{x\in X} \|A(x)^{-1}\|$. 
Hence, adding the term $\|A(x)^{-1}-A(y)^{-1}\|$ to the  $\cC^r$ norm and metric ($r\geq 0$) yields a locally equivalent norm and metric, respectively. Therefore, all results from \cite{NRR1} stated with respect to the usual H\"older topology remain valid for the norm used here.
\end{remark}

\subsection{Cocycles admitting fiberwise bounded orbits}\label{sec:qc-and-bounded}
We say that a cocycle $(f,A)$ over a homeomorphism $f: X \to X$ \emph{admits fiberwise bounded orbits} if there exists a point $x \in X$ such that the norms of the matrices $A_n(x)$ and their inverses along the full orbit of $x$ remain uniformly bounded. That is, 
\begin{equation}\label{eq:b-property-def}
    \sup_{n \in \Z}  \|A_n(x)^{\pm 1}\| < +\infty. 
\end{equation}

Note that for $\sldr$ cocycles, the condition \eqref{eq:b-property-def} is equivalent to the boundedness of the sequence $\{\|A_n(x)\|\}_{n\in \Z}$. We denote the set of all cocycles admitting fiberwise bounded orbits by $\cB$. 
In certain contexts, it is necessary to explicitly emphasize the upper bound of the supremum in \eqref{eq:b-property-def}. Motivated by this, for any $\kappa\geq 0$, we define
\begin{align*}
    &\mathrm{B}^+(A,\kappa) := \bigl\{ x \in X : \|A_n(x)^{\pm 1}\| \le \kappa \text{ for all } n \in \N \bigr\},\\
    &\mathrm{B}^{\pm}(A,\kappa) := \bigl\{ x \in X : \|A_n(x)^{\pm 1}\| \le \kappa \text{ for all } n \in \Z \bigr\},
\end{align*}
Since $\mathrm{B}^{\pm}(A,\kappa)$ is not necessarily
$\sigma$-invariant, we define ${\bfB}(A,\kappa)$ to be its maximal
$\sigma$-invariant subset.

The following lemma provides some primary properties of the sets $\mathrm{B}^+(A,\kappa)$. Analogous properties hold for $\mathrm{B}^{\pm}(A,\kappa)$, the set of points with fiberwise bounded full orbit, in which $\mathrm{B}^+(A,\kappa)$ is replaced by its corresponding full-orbit counterpart.

\begin{lemma}\cite[Lemma 3.3]{NRR1}\label{lem:b-properties}
For every  $\kappa,\kappa' > 0$, 
\begin{itemize}
    \item[(i)] If $\|A_m(x)^{\pm 1}\| \le \kappa$ and $\|A_n(x)^{\pm 1}\| \le \kappa'$,  for some  $n>m\geq 0$, then $$\|A_{n-m}(f^m(x))\| \le \kappa\kappa'.$$
    \item[(ii)] $\mathrm{B}^+(A,\kappa)$ is closed.
    \item[(iii)] If $x \in \mathrm{B}^+(A,\kappa)$, then $\cO^+(x) \subset \mathrm{B}^+(A,\kappa^2)$.
    \item[(iv)] If $x \in \mathrm{B}^+(A,\kappa)$, then  $ \omega(x)\subset {\bfB}(A,\kappa^2)$.
\end{itemize}
\end{lemma}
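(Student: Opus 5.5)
The four items follow directly from the cocycle identity $A_n(x)=A_{n-m}(f^m(x))\,A_m(x)$ for $n>m\ge 0$, continuity of $A$ and $f$, and a limiting argument for the $\omega$-limit set; we take them in order.

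\emph{Item (i).} Rewrite the cocycle identity as $A_{n-m}(f^m(x))=A_n(x)A_m(x)^{-1}$. Submultiplicativity of the operator norm then gives $\|A_{n-m}(f^m(x))\|\le \|A_n(x)\|\,\|A_m(x)^{-1}\|\le \kappa'\kappa$. Symmetrically, $A_{n-m}(f^m(x))^{-1}=A_m(x)A_n(x)^{-1}$ has norm at most $\kappa\kappa'$, so the bound holds for both signs.

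\emph{Item (ii).} For each fixed $n\in\N$, the map $x\mapsto A_n(x)^{\pm1}$ is continuous, because it is a finite product of continuous maps composed with iterates of $f$, and inversion is continuous on $\gldr$. Hence each set $\{x:\|A_n(x)^{\pm1}\|\le\kappa\}$ is closed. The set $\mathrm{B}^+(A,\kappa)$ is the intersection of these over $n$, so it is closed.

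\emph{Item (iii).} Let $x\in \mathrm{B}^+(A,\kappa)$ and $m\ge0$. For every $k\ge1$, apply (i) with $n=m+k$ and $\kappa'=\kappa$. This gives $\|A_k(f^m(x))^{\pm1}\|\le\kappa^2$, so $f^m(x)\in\mathrm{B}^+(A,\kappa^2)$.

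\emph{Item (iv).} This is the only step needing care, because it requires control over negative times. Take $y\in\omega(x)$, so that $f^{n_j}(x)\to y$ with $n_j\to\infty$.
\begin{itemize}
\item For $n\ge0$, item (iii) gives $\|A_n(f^{n_j}x)^{\pm1}\|\le\kappa^2$. Passing to the limit by continuity yields $\|A_n(y)^{\pm1}\|\le\kappa^2$.
\item For $n<0$, write $n=-k$ and use $A_{-k}(z)=A_k(f^{-k}z)^{-1}$. With $z=f^{n_j}(x)$ and $j$ large enough that $n_j\ge k$, we get $A_{-k}(f^{n_j}x)=A_k(f^{n_j-k}x)^{-1}$. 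By (i) applied with $m=n_j-k$ and $n=n_j$, its norm and the norm of its inverse are both at most $\kappa^2$. Letting $j\to\infty$ and using continuity of $f^{-k}$ gives $\|A_{-k}(y)^{\pm1}\|\le\kappa^2$.
\end{itemize}
Thus $y\in\mathrm{B}^{\pm}(A,\kappa^2)$. Since $\omega(x)$ is invariant under $f$ (forward and backward, because $f$ is a homeomorphism of a compact space), we have $\omega(x)\subset\mathrm{B}^{\pm}(A,\kappa^2)$. An invariant subset of $\mathrm{B}^{\pm}(A,\kappa^2)$ lies in its maximal invariant subset, so $\omega(x)\subset{\bfB}(A,\kappa^2)$.
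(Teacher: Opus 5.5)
Your proof is correct and is the standard argument; the paper gives no proof of its own, only a citation to the first paper of the series. The steps are the cocycle identity for (i), continuity for (ii), (i) applied along the forward orbit for (iii), and for (iv) a limit argument plus the invariance of $\omega(x)$ to reach the maximal invariant subset. One small unstated point: applying (i) with $m=0$ (in (iii), and in (iv) when $n_j=k$) needs $\|A_0(x)^{\pm1}\|=1\le\kappa$. This holds automatically, because $1\le\|A_1(x)\|\,\|A_1(x)^{-1}\|\le\kappa^2$ whenever $\mathrm{B}^+(A,\kappa)\neq\emptyset$.
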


A corollary of this lemma is that 
\begin{align}\label{eq:inclusion-B}
     \mathrm{B}^{\pm}(A,\kappa)\subset \mathrm{B}^+(A,\kappa)\subset {\bfB}(A,\kappa^2) \subset \mathrm{B}^{\pm}(A,\kappa^2).
\end{align}

A very related notion to the set of fiberwise bounded orbits is that of quasi-conformal orbits. We say that  a cocycle $(f,A)$ \emph{admits quasi-conformal orbits} if there exists $x \in X$ such that the sequence of condition numbers $\{ \|A_n(x)\| \|A_n(x)^{-1}\| \}_{n\in \Z}$ is bounded. See \cite[Section 3]{NRR1} for a comprehensive discussion.

\subsection{Symbolic dynamics}
\label{subsec:symbolic-dyn}

Let $\cA$ be a finite alphabet. For simplicity, we usually assume $\cA:=\{1,2,\ldots,|\cA|\}$.  The set of all bi-infinite sequences over $\cA$ is denoted by $\Sigma_\cA$ or $\cA^{\times \Z}$. 
For integers $n \le n'$ and a finite word $\ua=(a_{n},a_{n+1},\ldots,a_{n'})\in \cA^{\times (n'-n+1)}$, the corresponding cylinder set $\cylinder[n;\ua]\subset \Sigma_\cA$ is defined as
$$
\cylinder[n;\ua]:=\bigl\{(w_i)_{i\in \Z}\in {\Sigma_\cA}\;  :\;  w_i=a_i ~ \text{for}~ i=n,\ldots,n'\bigr\}.
$$
If $\ua\in \cA^{\times (2n+1)}$ for some $n\in \N\cup \{0\}$, we simply write $\cylinder[\ua]$ instead of $\cylinder[-n;\ua]$. These cylinder sets form a basis for the topology on $\Sigma_\cA$, which is induced by the metric
\begin{equation}\label{eq:dist-shift-space}
\dist\big((x_i)_{i\in \Z},(y_i)_{i\in \Z}\big)=2^{-n},\quad \text{where}~~ n= \min \{|i|: x_i \neq y_i \}.
\end{equation} 
With this topology, the left shift map $\sigma:\Sigma_\cA\to \Sigma_\cA$, defined by
$$
\sigma(\ldots,x_{-1};x_0,x_1,\ldots):=(\ldots,x_0;x_1,x_2,\ldots),
$$
is a homeomorphism. For any $x=(x_i)_{i\in \Z}\in \Sigma_\cA$, the local unstable set is given by
 \begin{align*}
     W_{loc}^u(x) &:=\bigl\{(y_i)_{i\in \Z} :  y_i=x_i~ \text{for}~ i\leq  0\bigr\}.
 \end{align*}
 The local stable set, $W^s_{loc}(x)$ is defined similarly. 
Furthermore, for any $k\in \N$ and finite word $\underline{x}=(x_1,\ldots,x_k)\in \cA^{\times k}$, we can naturally associate $\underline{x}$ with the periodic point $(\ldots, \underline{x};\underline{x},\underline{x},\ldots)$ of $\sigma$ (of period $k$).

A particularly important class of shift subsystems is that of \emph{subshifts of finite type} (SFTs). Let $H$ be an $|\cA| \times |\cA|$ transition matrix with entries in $\{0,1\}$. For $n \ge 1$, we define the set of \emph{$H$-admissible words of length $n$} by$$\cA_H^{\times n} := \bigl\{ (x_1, \dots, x_n) \in \cA^{\times n} : H_{x_i, x_{i+1}} = 1 \text{ for all } 1 \le i \le n-1 \bigr\}.$$Furthermore, let $\cA_H^{\mathrm{fin}} := \bigcup_{n \ge 1} \cA_H^{\times n}$. The set of bi-infinite $H$-admissible words is defined analogously and is denoted by $\Sigma_H$, which is a closed, $\sigma$-invariant subset of $\Sigma_\cA$. The corresponding SFT is defined as the restriction of $\sigma$ to this set.

The topological entropy of the restriction of $\sigma$ to $\Sigma_H$ can be explicitly calculated as $h_{\mathrm{top}}(\sigma) = \log   \rho(H)$, where $\rho(H)$ denotes the spectral radius of $H$ (i.e., the maximum modulus of its eigenvalues). Since the matrix $H$ has non-negative entries, the Perron-Frobenius theorem guarantees that $\rho(H)$ is a real, positive eigenvalue, usually referred to as the Perron eigenvalue (or Perron-Frobenius eigenvalue). It is a well-known fact that a transitive SFT has strictly positive topological entropy if and only if it is not a single periodic orbit. Recall that an SFT is topologically transitive if it contains a dense forward orbit, which is equivalent to the  matrix $H$ being irreducible.

In general, for a closed subset $\Sigma \subset \cA^{\times \Z}$ with $\sigma(\Sigma)=\Sigma$ (not necessarily of finite type), the topological entropy of the restriction of the shift map $\sigma$ to $\Sigma$ is given by the exponential growth rate of the number of admissible words of length $n$. More precisely, let $\cL_n(\Sigma)$ denote the set of distinct words of length $n$ that appear as subwords of elements in $\Sigma$. Then
\begin{equation}\label{eq:entropy-growth-word}
     h_{\mathrm{top}}(\sigma \big|_{\Sigma})
    = \lim_{n \to \infty} \frac{1}{n} \log   |\cL_n(\Sigma)|,
\end{equation}
where the limit always exists due to the subadditivity of the sequence $\log  | \cL_n(\Sigma)|$.   For a more comprehensive background on symbolic dynamics and SFTs, we refer the reader to \cite{Lind_Marcus_Symbolic}.

\medskip

We also need to introduce some operations on $\cA^{\mathrm{fin}}_H$. For $\uw=(w_1,\ldots,w_n)\in \cA_H^{\times n}$ and a positive integer $t\leq n$, the \emph{$t$-prefix} and \emph{$t$-suffix} of $\uw$ are $(w_1,\ldots,w_t)\in \cA^{\times t}$ and $(w_{n-t+1},\ldots,w_n)\in \cA^{\times t}$, respectively. For two words $\underline{w}=(w_1,\ldots,w_n)\in \cA_H^{\times n}$ and $\underline{w}'=(w'_1,\ldots,w'_{n'})\in \cA_H^{\times n'}$, and an integer $0 \leq t \leq \min\{n,n'\}$, if the $t$-suffix of $\underline{w}$ coincides with the $t$-prefix of $\underline{w}'$, we define their \emph{$t$-amalgamated concatenation}, denoted by $\underline{w}\vee_{t} \underline{w}'\in \cA_H^{\times (n+n'-t)}$ (or simply $\underline{w}\vee \underline{w}'$ if there is no ambiguity), by 
\begin{equation}\label{eq:def-merge}
  \underline{w}\vee_{t} \underline{w}':=(w_1,\ldots,w_n,w'_{t+1},w'_{t+2},\ldots,w'_{n'})=(w_1,\ldots,w_{n-t},w'_1,\ldots,w'_{n'}).
\end{equation}
Note that when $t=0$, the condition on prefixes and suffixes is vacuous, and $\underline{w}\vee_0 \underline{w}'$ is exactly the concatenation of the two words, and we denote it alternatively by $\underline{w}\,\underline{w}'$. In this case, it is required that $H_{w_n,w'_1}=1$ to ensure $\underline{w}\,\underline{w}'\in \cA_H^{\mathrm{fin}}$. For a positive integer $k$, we use the abbreviation $\underline{w}^k$ to denote the $k$-fold concatenation $\underline{w}\,\underline{w} \cdots \underline{w}$ ($k$ times), provided it is admissible in $\Sigma_H$.

\medskip
In accordance with \cite[Definition 2.6]{NRR1}, for $\ua, \ub \in \cA_H^{\mathrm{fin}}$, a word $\uw \in \cA_H^{\mathrm{fin}}$ is called a \emph{transition from $\ua$ to $\ub$} if $\ua$ is the $|\ua|$-prefix of $\uw$ and $\ub$ is its $|\ub|$-suffix. The set of all transitions from $\ua$ to $\ub$ is denoted by $T(\ua, \ub)$.
Observe that for any $\ua, \ua', \ua'' \in \cA_H^{\mathrm{fin}}$, and $\uw \in T(\ua, \ua')$, $\uw' \in T(\ua', \ua'')$, we have
$$
\uw \vee_{|\ua'|} \uw' \in T(\ua, \ua'').
$$ 

Given an integer $n\geq 1$, one can associate a directed graph with the subshift $\sigma:\Sigma_H\to \Sigma_H$ as follows. Consider all the elements of $\cA_H^{\times n}$ as vertices, and draw a directed edge from $\ua$ to $\ub$ if the $(n-1)$-suffix of $\ua$ coincides with the $(n-1)$-prefix of $\ub$ (Figure \ref{fig:graph}). 
In this viewpoint, a transition between two elements of $\cA^{\mathrm{fin}}_H$ having the same length $n$ corresponds to a walk in the associated graph. 

\begin{figure}[htpb]
     \begin{subfigure}[b]{0.6\textwidth}
         \centering
         \includegraphics[width=\textwidth]{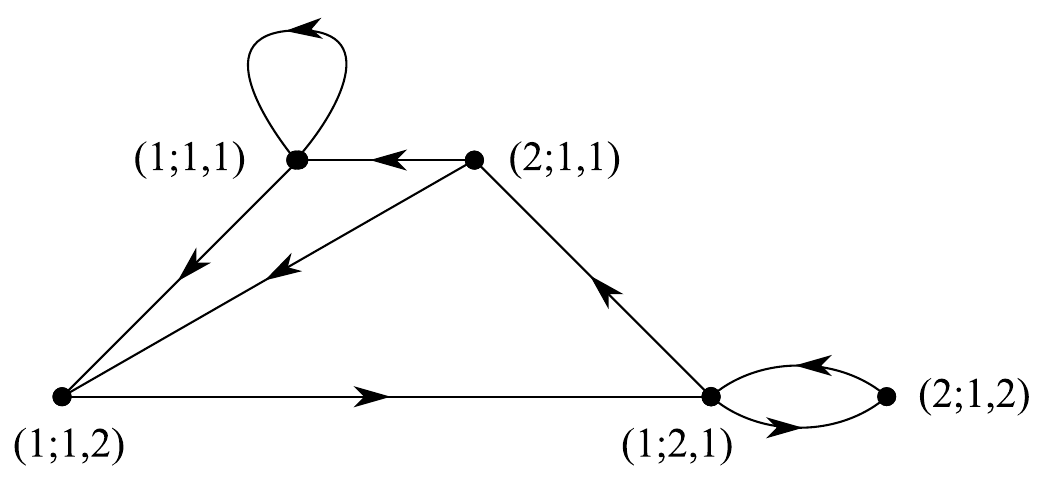}
     \end{subfigure}
    \caption{The graph associated with the SFT corresponding to $H=(\begin{smallmatrix}
        1&1\\
        1&0
    \end{smallmatrix})$, where $\cA=\{1,2\}$ and $n=3$.}
    \label{fig:graph}
\end{figure}

The next lemma establishes some properties of the aforementioned graph when the SFT is transitive.   

\begin{lemma}\label{lem:connected}
Let $\sigma:\Sigma_H\to \Sigma_H$ be a transitive SFT. Then for every $\ua,\ub\in \cA^{\mathrm{fin}}_H$, we have $T(\ua,\ub)\neq \emptyset$. In particular, for every $n\geq 1$, the graph associated with the vertex set $\cA_H^{\times n}$ is strongly connected (i.e., there is a directed path between every two vertices). Moreover, there exists a vertex with an out-degree of at least two, unless $\Sigma_H$ consists of a single periodic orbit of $\sigma$, or equivalently $h_{\mathrm{top}}(\sigma\big|_{\Sigma_H})=0$. 
\end{lemma}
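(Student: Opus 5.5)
The plan is to reduce everything to the irreducibility of $H$, handling the three claims in order: transitions exist, the graph on $\cA_H^{\times n}$ is strongly connected, and a vertex of out-degree at least two exists unless $\Sigma_H$ is a single periodic orbit. We assume, as usual, that every symbol of $\cA$ occurs in $\Sigma_H$ (no inessential symbols). Then transitivity means there is $z\in\Sigma_H$ whose forward orbit is dense.

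\textbf{Existence of transitions.} Let $\ua,\ub\in\cA_H^{\mathrm{fin}}$. Since $\ua$ and $\ub$ are admissible and appear in points of $\Sigma_H$, the cylinders $\cylinder[0;\ua]$ and $\cylinder[0;\ub]$ are nonempty open subsets of $\Sigma_H$. A dense forward orbit visits each nonempty open set infinitely often, so we can pick times $0\le i<j$ with $j\ge i+|\ua|$, $\sigma^i z\in\cylinder[0;\ua]$ and $\sigma^j z\in \cylinder[0;\ub]$. The word $(z_i,\dots,z_{j+|\ub|-1})$ is admissible, has $\ua$ as prefix and $\ub$ as suffix, and hence lies in $T(\ua,\ub)$.

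\textbf{Strong connectivity.} Take vertices $\ua,\ub\in\cA_H^{\times n}$ and $\uw\in T(\ua,\ub)$ of length $L\ge n$. The consecutive length-$n$ windows of $\uw$ are admissible words. Each pair of successive windows overlaps in $n-1$ symbols, so it is an edge of the graph. These windows therefore form a directed walk from $\ua$ to $\ub$.

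\textbf{Out-degree.} Suppose every vertex of $\cA_H^{\times n}$ has out-degree at most one. By strong connectivity each vertex has out-degree exactly one, so the graph is a finite functional graph that is strongly connected, i.e.\ a single directed cycle of some length $p$. Every $x\in\Sigma_H$ gives a bi-infinite walk $(x_k,\dots,x_{k+n-1})_k$ in this graph, which is forced to go around the cycle. Hence $x$ is periodic of period $p$ and determined by its position on the cycle, so $\Sigma_H$ is a single periodic orbit. Then $|\cL_m(\Sigma_H)|\le p$ for all $m$, which gives zero entropy by \eqref{eq:entropy-growth-word}.

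\textbf{Converse.} If $\Sigma_H$ is a single periodic orbit, the same bound gives $h_{\mathrm{top}}=0$. If $h_{\mathrm{top}}=0$ and $\Sigma_H$ were not a single periodic orbit, some vertex would have two out-edges. Let $\ua$ be that vertex, with two distinct successors $\ub,\ub'$. Using transitions from $\ub$ and from $\ub'$ back to $\ua$, we get two distinct cycles through $\ua$ of lengths $\ell,\ell'$. Concatenating these loops in all orders over length $\ell\ell'$ produces at least $2^{k}$ distinct admissible words of length $k\ell\ell'$, giving entropy at least $\frac{\log 2}{\ell\ell'}>0$, a contradiction. So the trichotomy matches the stated equivalence.

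\textbf{Main obstacle.} The only delicate step is the first one: showing that the dense orbit visits $\ub$ strictly after the occurrence of $\ua$. This is handled by the infinitely-many-returns property of a dense forward orbit in a space with no isolated points. In the degenerate case where $\Sigma_H$ is finite, it is a single periodic orbit and the statement is checked directly.
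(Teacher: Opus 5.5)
Your proof is correct and follows the same route as the paper: extract a transition from a dense forward orbit, deduce strong connectivity, and note that a strongly connected graph with all out-degrees one is a single cycle, hence a single periodic orbit of zero entropy. You also justify two points the paper leaves implicit: why the dense orbit returns to $\cylinder[0;\ub]$ after visiting $\cylinder[0;\ua]$, and the converse that zero entropy forces a single periodic orbit, which you prove with two equal-length loops as in Lemma~\ref{lem:distinct-transitions}.
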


\begin{proof}
Fix $\ua,\ub\in \cA^{\mathrm{fin}}_H$. Let $x=(x_i)_{i\in \Z} \in \Sigma_H$ be a point whose forward orbit is dense in $\Sigma_H$. Then there exist positive integers $m_1,m_2$ with $m_1 +|\ua| < m_2$ such that $\sigma^{m_1}(x)\in \cylinder[0;\ua]$ and $\sigma^{m_2}(x)\in \cylinder[0;\ub]$. Now, $\uw=(x_{m_1},x_{m_1+1},\ldots,x_{m_2+|\ub|-1})$ is a transition from $\ua$ to $\ub$. 

Finally, if for some $n\geq 1$, each vertex of the graph with vertices $\cA_H^{\times n}$ has an out-degree of one, strong connectivity of the graph directly implies that it must be a single directed cycle, which corresponds to a single periodic orbit and the topological entropy is clearly zero. 
\end{proof}

These notations are particularly useful when working with the iterates of locally constant cocycles over SFTs, a class of cocycles that appears frequently in this paper. For every locally constant $\gldr$-cocycle $(\sigma,A)$ over $\sigma:\Sigma_H\to \Sigma_H$, there exist integers $k,k'\geq 0$ and a map $\phi: \cA_H^{\times(k+k'+1)} \to \gldr$ such that
\[
A|_{\cylinder[-k;\ua]} = \phi(\ua), \quad 
\text{for every } \ua \in \cA_H^{\times (k+k'+1)}.
\]
In this setting, we say that $(\sigma, A)$ \emph{depends on positions in} 
$\ldbrack -k, k'\rdbrack := \{-k, \dots, k'\}$ and is \emph{represented by} $\phi$. 
For convenience, and without loss of generality, we usually assume the blocks are symmetric (i.e., $k=k'$).

In such a case, for every transition $\underline{w} = (w_1, w_2, \dots, w_n)$ between two elements of $\cA_H^{\times (2k+1)}$, we define 
\[
A_{[\underline{w}; 2k+1\rangle} := \phi(\underline{w}_{n-2k}) \cdots \phi(\underline{w}_2)\phi(\underline{w}_1),
\]
where $\underline{w}_i = (w_i, w_{i+1}, \dots, w_{i+2k})$. It is clear that for $\underline{w} \in T(\underline{a}, \underline{a}')$ and $\underline{w}' \in T(\underline{a}', \underline{a}'')$, we have
\begin{equation}\label{eq:composition-transition}
    A_{[\underline{w} \vee_{2k+1} \underline{w}'; 2k+1\rangle} = A_{[\underline{w}'; 2k+1\rangle} A_{[\underline{w}; 2k+1\rangle}.
\end{equation}
Indeed, it is easy to verify that $A_{[\underline{w}; 2k+1\rangle}$ is the iterate of the cocycle $(\sigma,A)$ over the cylinder $\cylinder[-k;\uw]$, and \eqref{eq:composition-transition} naturally reflects the cocycle property of $A$. 

Translated into the language of graphs, for a locally constant cocycle depending on positions in $\ldbrack -k, k\rdbrack$, one can consider the directed graph associated with the SFT whose vertices are the elements of $\cA_H^{\times (2k+1)}$. The matrix $\phi(\ua)$ is then assigned to the vertex $\ua$. Consequently, for any transition $\uw$, the matrix $A_{[\underline{w}; 2k+1\rangle}$ represents the product of the matrices associated with the vertices visited along the walk $\uw$, excluding the final vertex.

\section{From Covering to Multiple Covering}
\label{sec:multiple-covering}

In this section, we begin by recalling the \emph{covering condition} for topological groups (see \cite{FNR}). We then extend this concept to the notion of multiple covering, which is fundamental for achieving positive entropy within the set of fiberwise bounded orbits.

We say that a subset $\cG$ of a topological group $G$ satisfies the \emph{covering condition} with respect to a non-empty set $\cU \subset G$ if 
\begin{equation}\label{eq:covering-IFS}
\overline{\cU} \subset \cG^{-1}\cU.
\end{equation}

Furthermore, we say that a set $\cW\subset G^{\mathrm{fin}}$ satisfies the \emph{multiple covering condition of $n$ layers}  with respect to a non-empty set $\cV\subset G$ if there exist mutually disjoint subsets $\cW_1, \dots, \cW_n \subset \cW$ such that for every $j = 1, \dots, n$,
\begin{equation}
\overline{\cV} \subset (\cP_{\cW_j})^{-1} \cV.
\end{equation}

Note that condition \eqref{eq:covering-IFS} is not vacuous. Indeed, for any non-empty open set $\cU$, the condition can be satisfied by taking $\cG$ to be a sufficiently small neighborhood of the identity. Furthermore, if $\cU$ has a compact closure, a standard compactness argument allows $\cG$ to be replaced by a finite subset. About the multiple covering condition, while an obvious way to establish the multiple covering condition is to use several distinct sets $\cG_j$ that each satisfy the covering condition for the same set $\cU$, Lemma~\ref{lem:IFS-multi} below provides a more subtle argument. It shows that if $\overline{\cU}$ is compact, the covering condition  \eqref{eq:covering-IFS} for $\cG$ implies the multiple covering condition for one of its finite powers, $\cG^{\times k}$.

For our purposes, the group $G$ is a closed subgroup of $\gldr$, typically $\sldr$ or $\gldr$ itself, and the sets $\cU, \cV, \dots$ are non-empty, precompact, open subsets of $G$. Here, by a precompact set we mean a set whose closure in $G$ is compact. When $G$ is a closed subgroup of $\gldr$, the covering condition \eqref{eq:covering-IFS} plays a crucial role in identifying the fiberwise bounded orbits for the locally constant cocycle, which depends on the zeroth position naturally associated with $\cG$, as well as ensuring the stability of their existence. The following lemma from \cite{FNR} illustrates this relationship.

For a finite set $\cG \subset \gldr$, we say that $\langle \cG \rangle^+$, the semigroup generated by $\cG$, has a \emph{quasi-conformal branch} if there exists a sequence $\{D_i\}_{i=1}^\infty$ of elements in $\cG$ such that  $\sup_{k\in \N}\|D_k\cdots D_1\|.\|(D_k\cdots D_1)^{-1}\|<+\infty$.

\begin{lemma}\cite{FNR}\label{lem:U-branch}
Let $\cG \subset \operatorname{SL}(d,\R)$ be finite. Then, $\langle \cG \rangle^+$ has a quasi-conformal branch if and only if there exists a subset $\cU \subset \operatorname{SL}(d,\R)$ with compact closure such that $\overline{\cU} \subset \cG^{-1} \cU$.
\end{lemma}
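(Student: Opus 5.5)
We prove the two implications separately. Throughout, the sets $\cU$ are taken in $\sldr$, so a product $P\in\sldr$ lying in a fixed compact set has both $\|P\|$ and $\|P^{-1}\|$ bounded, and therefore has bounded condition number.

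\emph{Covering $\Rightarrow$ quasi-conformal branch.} Suppose $\overline{\cU}\subset \cG^{-1}\cU$ with $\overline{\cU}$ compact, and assume $\cU\neq\emptyset$.
\begin{itemize}
\item[(1)] Pick $P_0\in\cU$.
\item[(2)] Inductively, given $P_k\in\cU\subset\overline{\cU}$, the covering condition gives $D_{k+1}\in\cG$ and $P_{k+1}\in\cU$ with $P_k=D_{k+1}^{-1}P_{k+1}$. Equivalently, $P_{k+1}=D_{k+1}P_k$.
\item[(3)] Then $D_k\cdots D_1=P_kP_0^{-1}$ for every $k$.
\item[(4)] Since $P_k$ ranges in the compact set $\overline{\cU}$, we get $\|D_k\cdots D_1\|\le \sup_{\overline{\cU}}\|\cdot\|\,\|P_0^{-1}\|$. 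The same kind of bound holds for the inverse, since $\overline{\cU}^{-1}$ is also compact.
\end{itemize}
This yields the quasi-conformal branch.

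\emph{Quasi-conformal branch $\Rightarrow$ covering.} Let $\{D_i\}$ be a branch and set $Q_k:=D_k\cdots D_1$, with $Q_0=\mathrm{Id}$. Since $Q_k\in\sldr$ and its condition number is bounded, we have $\|Q_k\|^d\le \|Q_k\|^{d-1}\|Q_k^{-1}\|\cdot$const, and in fact $\|Q_k^{-1}\|\le\|Q_k\|^{d-1}$. So boundedness of the condition numbers forces $\sup_k\|Q_k^{\pm1}\|<\infty$. Define
\[
\cU:=\{Q_k: k\ge 0\}.
\]
This set is bounded in $\sldr$, with inverses bounded as well, so its closure is compact in $\sldr$. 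For each $k$ we have $Q_k=D_{k+1}^{-1}Q_{k+1}\in\cG^{-1}\cU$, hence $\cU\subset\cG^{-1}\cU$.

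\emph{The main obstacle: passing to the closure.} We need $\overline{\cU}\subset\cG^{-1}\cU$, but $\cG^{-1}\cU$ need not be closed, and a limit point $Q=\lim Q_{k_j}$ need not lie in $\cG^{-1}\cU$. There are two natural remedies:
\begin{itemize}
\item[(a)] Replace $\cU$ by $\overline{\cU}$. This works because $\cG$ is finite, so $\cG^{-1}\overline{\cU}$ is closed and contains $\cG^{-1}\cU\supset\cU$, hence contains $\overline{\cU}$. The inclusion $\overline{\overline{\cU}}=\overline{\cU}\subset\cG^{-1}\overline{\cU}$ then holds, since the statement does not require openness.
\item[(b)] If openness is desired, thicken: take $\cU_\delta=\overline{\cU}\cdot B_\delta$, a small open neighbourhood, and use compactness together with finiteness of $\cG$ to check the inclusion.
\end{itemize}
Since the lemma only asks for a set with compact closure, option (a) suffices: the set $\overline{\{Q_k\}}$ works.
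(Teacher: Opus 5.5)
Your proof is correct. The paper does not prove this lemma; it quotes it from \cite{FNR}, so there is no in-text argument to compare against. Your two directions are the natural proof. For the forward direction, you build the branch greedily from the covering, which gives $D_k\cdots D_1=P_kP_0^{-1}$ with $P_k$ in a compact set. For the converse, you take $\overline{\{Q_k\}}$ and use that $\cG^{-1}\overline{\{Q_k\}}$ is closed because $\cG$ is finite.

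Three side points in your write-up need fixing, though the final argument does not rely on any of them.

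\textbf{The norm bound.} Your justification that bounded condition numbers force $\sup_k\|Q_k^{\pm1}\|<\infty$ is muddled.
\begin{itemize}
\item The inequality $\|Q\|^d\le C\|Q\|^{d-1}\|Q^{-1}\|$ amounts to $\|Q\|\le C\|Q^{-1}\|$. This is not a general inequality in $\sldr$ for $d\ge3$: take singular values $t^2,t^{-1},t^{-1}$.
\item The inequality $\|Q^{-1}\|\le\|Q\|^{d-1}$ does not by itself bound $\|Q\|$.
\item The clean reason is $1=|\det Q^{\pm1}|\le\|Q^{\pm1}\|^d$. Hence $\|Q^{\pm1}\|\ge1$, and therefore $\|Q^{\pm1}\|\le\|Q\|\,\|Q^{-1}\|$.
\end{itemize}

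\textbf{Remedy (b) is false.} It is not merely unnecessary. Take $d\ge2$ and $\cG=\{\id\}$, which has a quasi-conformal branch. An open $\cU$ with $\overline{\cU}\subset\cG^{-1}\cU=\cU$ is clopen in the connected, non-compact group $\sldr$. So $\cU$ is either empty or not precompact, and thickening $\overline{\cU}B_\delta$ cannot work. The lemma genuinely cannot be upgraded to open $\cU$, and relying on option (a) was the right choice.

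\textbf{Non-emptiness.} You need $\cU\neq\emptyset$, as you assumed, because $\cU=\emptyset$ satisfies the inclusion trivially. This is implicit in the paper's definition of the covering condition.
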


For further discussion regarding this relationship, we refer the reader to \cite{FNR,NRR1}.

\begin{remark}\label{rmk:covering-improvement}
    We remark that for any Hausdorff topological group $G$ (such as $\gldr$ or its closed subgroups), if for a fixed subset $\cG$, the covering condition \eqref{eq:covering-IFS} holds for some non-empty open set $\cU$ with compact closure, then it also holds if we replace $\cU$ with another subset of the form $\cU\cZ$, provided that $\cZ$ is a non-empty set with compact closure. A similar property holds for the multiple covering condition. 

    The reason follows from a series of simple observations. First, if two subsets $\cZ_1, \cZ_2$ have compact closures, then $\overline{\cZ_1\cZ_2} = \overline{\cZ_1} \; \overline{\cZ_2}$. Second, whenever $\cO$ is open, then for any arbitrary subset $\cZ$, $\cO \, \cZ$ is again open, and moreover, $\cO \, \overline{\cZ} = \cO \, \cZ$. Using these observations, we conclude that when $\cG$ satisfies the covering condition \eqref{eq:covering-IFS} with respect to an open set $\cU$ with compact closure, then for any arbitrary non-empty set $\cZ$ with compact closure,
    \[
    \overline{\,\cU\, \cZ\,} = \overline{\cU}\, \overline{\cZ} \subset \cG^{-1}\,\cU \, \overline{\cZ} = \cG^{-1}\,\cU\, \cZ.
    \]
\end{remark}

The following lemma demonstrates that if a finite set $\cG$ satisfies the covering condition for a given open set $\cU$ with compact closure, we can guarantee a multiple covering of a new set $\cV$ by replacing $\cG$ with a sufficiently large $k$-fold product.

\begin{lemma}[Multiple covering principle for linear groups]\label{lem:IFS-multi}
Let $G$ be a closed subgroup of $\gldr$, and let $\cG\subset G$ be a non-empty set with compact closure, and containing least two elements. If $\cG$ satisfies the covering condition with respect to a non-empty precompact open subset $\cU$ of $G$, then for any $n \in \N$, there exist an integer $k \in \N$ and a non-empty precompact open set $\cV \subset G$ such that $\cG^{\times k}$ satisfies the multiple covering condition of $n$ layers with respect to $\cV$.
\end{lemma}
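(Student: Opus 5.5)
The plan is to build the $n$ layers from $n$ distinct ``marker'' words of a fixed length, each followed by covering words, and to choose $\cV$ large enough to absorb the distortion the markers introduce. The absorption step (Step 3 below) is not yet settled; I give a fallback for it.

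\emph{Step 1 (finite reduction and iterated covering).} Since $\overline{\cU}$ is compact and covered by the open sets $g^{-1}\cU$, $g\in\cG$, some finite $\cF\subset\cG$ already satisfies $\overline{\cU}\subset\cF^{-1}\cU$. Iterating gives $\overline{\cU}\subset(\cP_{\cF^{\times m}})^{-1}\cU$ for every $m\ge1$: from $u\in\overline{\cU}$ pick $g_1$ with $g_1u\in\cU\subset\overline{\cU}$, then $g_2$, and so on.

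\emph{Step 2 (markers).} Fix $a\neq b$ in $\cG$ and $\ell$ with $2^\ell\ge n$. Let $\uc_1,\dots,\uc_n$ be distinct words in $\{a,b\}^{\times\ell}$, and put $C_j:=\cP_{[\uc_j]}$. The $j$-th layer is
\[
\cW_j:=\{\uc_j\,\uw:\ \uw\in\cG^{\times m}\}\subset\cG^{\times k},\qquad k=\ell+m .
\]
These sets are pairwise disjoint \emph{as sets of words}, because their $\ell$-prefixes differ. This holds even if some products coincide in $G$, and disjointness of words is all the definition requires. For $\uc_j\uw$ we have $\cP_{[\uc_j\uw]}=\cP_{[\uw]}C_j$.

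\emph{Step 3 (choice of $\cV$; the main obstacle).} We need, for every $v\in\overline{\cV}$ and every $j$, some $\uw$ with $\cP_{[\uw]}C_jv\in\cV$. The difficulty is that the marker $C_j$ acts by left translation. Remark~\ref{rmk:covering-improvement} only lets us enlarge $\cU$ on the right, to $\cU\cZ$, without losing the covering. It says nothing about whether $C_j\overline{\cV}$ lies in a region that the covering brings back into $\cV$.

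What I would try first is $\cV=\cU\cZ$, with $\cZ$ a precompact open set chosen so that every $C_j\overline{\cU\cZ}$ is driven back by covering words into $\cU\cZ$. Step 1 keeps the right factor $z\in\overline{\cZ}$ fixed while repairing the left factor. So it is enough to arrange $C_j\,\overline{\cU}\,\overline{\cZ}\subset\overline{\cU}\,\overline{\cZ}'$ with $\cZ'\subset\cZ$. The natural candidate is $\cZ$ containing $\overline{\cU}^{-1}C_j\overline{\cU}$ for all $j$. One must check that this does not regress indefinitely, since enlarging $\cZ$ again enlarges $C_j\overline{\cU}\,\overline{\cZ}$.

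If the regress does not close, the fallback is to place the marker in the middle. Take layer words $\uw'\,\uc_j\,\uw$, where $\uw'\in\cF^{\times m'}$ first steers $v\in\overline{\cU}$ into a small open set $\cU_0\subset\cU$ with $\overline{\cU_0}$ compact. One then requires $C_j\overline{\cU_0}$ to lie in a compact set from which covering words reach $\cU$; this again reduces to a right-enlargement of the type in Remark~\ref{rmk:covering-improvement}. I expect this step to absorb essentially all of the work. Once it is settled, taking $k$ equal to the total word length completes the proof.
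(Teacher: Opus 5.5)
\paragraph{Gap in Step 3.} Your Steps 1 and 2 are fine and agree with the paper: it also separates the layers by distinct marker words of a fixed length $k_0$, disjoint as sets of words. But Step 3 is the whole content of the lemma, and neither of your routes closes it.

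In the first route, Remark~\ref{rmk:covering-improvement} transports the covering with the right factor held fixed, so nothing ever shrinks. Concretely, your sufficient condition $C_j\overline{\cU}\,\overline{\cZ}\subset\overline{\cU}\,\overline{\cZ'}$ with $\cZ'\subset\cZ$ says that the compact set $K:=\overline{\cU}\,\overline{\cZ}$ satisfies $C_jK\subset K$. Then $C_j^{\,p}\in Kx^{-1}$ for a fixed $x\in K$ and all $p\ge 0$, so the powers of $C_j$ stay in a compact subset of $G$. This fails as soon as some marker product has an eigenvalue of modulus $\neq 1$, and you have no control over the spectra of your marker products. So the regress you worried about genuinely does not terminate.

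Your fallback has the right architecture: contract, apply the marker, come back. That is in fact the shape of the paper's argument. But its first step is exactly the missing ingredient. The covering condition, used with a fixed right factor, only guarantees landing in $\cU$, never in a prescribed smaller set $\cU_0$. Moreover, once you right-enlarge to bring $C_j\overline{\cU_0}$ back, you land in $\cU\cZ$ rather than $\cU$. So $\cV$ must contain $\cU\cZ$, and you would have to contract all of $\overline{\cU}\,\overline{\cZ}$, which is the original problem again.

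\paragraph{The missing idea.} You need to convert the slack in $\overline{\cU}\subset\cG^{-1}\cU$ (a compact set inside an open one) into a definite decrement of a one-parameter right factor. By compactness, $\overline{\cU\cB_R}\subset\cG^{-1}\cU\cB_{R-\delta}$ with $\delta>0$ uniform for $R\in[1,R_0]$ (Proposition~\ref{prop:uniform-delta}). Hence, with $m$ chosen so that $R_0-m\delta<1$, every point of $\overline{\cU\cB_{R_0}}$ is sent into $\cU\cB_1$ by some word in $\cG^{\times m}$.

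The marker's distortion is then absorbed by having chosen $R_0$ \emph{beforehand}, so large that $C\,\cU\cB_1\subset\cU\cB_{R_0}$ for every marker product $C$ (Proposition~\ref{prop:cover-smaller-set}). This only uses that $\overline{\cU}^{-1}C\,\overline{\cU}\,\overline{\cB_1}$ is a compact subset of $G$, hence lies in some $\cB_{R_0}$.

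The order of choices breaks your regress: markers, then $R_0$, then $\delta$, then $m$. It is the number of covering steps, not the size of $\cZ$, that pays for the marker. Taking $\cV:=\cU\cB_{R_0}$ and $k=k_0+m$ gives the $n$ layers. Your marker-first ordering would also work once this contraction is available: push $C_j\overline{\cU\cB_{R_0}}$ into some $\cU\cB_{R_1}$, then contract from $R_1$ back below $R_0$.
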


The proof of Lemma~\ref{lem:IFS-multi} relies on the following propositions regarding the robustness of the covering property.

\begin{proposition}\label{prop:uniform-delta}
Let $G\subset \gldr$ be a closed subgroup and $\cG,\cU$ be non-empty subsets of $G$. If $\cU$ is precompact and open with $\overline{\cU} \subset \cG^{-1}\cU$, then for every interval $[R_{-}, R_{+}]\subset \R_{>0}$, there exists $\delta>0$ such that for every $R \in [R_{-}, R_{+}]$,
\begin{equation}\label{eq:covering-bigger-smaller}
    \cU\, \cB_{R+\delta} \subset \cG^{-1} \, \cU\,\cB_{R-\delta},
\end{equation}
where, for $r>0$, $
\cB_r
:=
\left\{
g\in G :
\|g-\id\|+\|g^{-1}-\id\|<r
\right\}
$. 
\end{proposition}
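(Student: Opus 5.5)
The plan is to derive the statement from compactness, in two stages. First, enlarge $\cU$ on the right by a fixed ball, as in Remark~\ref{rmk:covering-improvement}. Second, obtain the uniform margin $\delta$ from a decreasing-compacts versus increasing-opens argument, made uniform in $R$ by compactness of $[R_-,R_+]$.

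\emph{Step 1: a uniform margin for $\cU$ itself.} Pick a symmetric neighbourhood $\cW$ of $\id$ with $\cW\cW\subset \cB_{\varepsilon}$. For each $x\in\overline{\cU}$ the covering condition gives $g_x\in\cG$ with $g_x x\in\cU$. Since $\cU$ is open, there is $\varepsilon_x>0$ with $g_x x\,\cB_{2\varepsilon_x}\subset\cU$. The sets $x\,\cB_{\varepsilon_x}$ are open and cover the compact set $\overline{\cU}$, so finitely many of them suffice. Taking a minimum of the finitely many radii (and using continuity of multiplication in place of a triangle inequality for $\cB_r$) yields a neighbourhood $\cW_0$ of $\id$ with the following property: for every $y\in\overline{\cU}$ there is $g\in\cG$ with $g\,y\,\cW_0\subset\cU$.

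\emph{Step 2: reduction to a shrinking property of the balls.} Let $u\in\cU$ and $b\in\cB_{R+\delta}$, and choose $g$ as in Step 1 for $y=u$. For any $c\in\cW_0$ we can write
\[
g\,u\,b=(g\,u\,c^{-1})\,(c\,b),\qquad g\,u\,c^{-1}\in\cU .
\]
So it suffices to establish the following \emph{shrinking property}: there is $\delta>0$, uniform in $R\in[R_-,R_+]$, such that every $b\in\cB_{R+\delta}$ can be written as $b=c^{-1}b'$ with $c\in\cW_0$ and $b'\in\cB_{R-\delta}$. Equivalently, we need $\cB_{R+\delta}\subset\cW_0\,\cB_{R-\delta}$. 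For elements $b$ with $\|b-\id\|+\|b^{-1}-\id\|$ well below $R$, one may take $c=\id$. The real content concerns $b$ near the ``sphere'' where the quantity is about $R$.

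\emph{Step 3: the main obstacle.} To prove the shrinking property, I would first try a deformation toward the identity. For $b$ in the identity component, use a short path such as $c=\exp(-tX)$ with $b=\exp(X)\cdots$, or a polar-decomposition contraction of the singular values of $b$ toward $1$. Such a $c$ strictly decreases $\|b-\id\|+\|b^{-1}-\id\|$ by an amount bounded below on the compact annulus $\{R_--1\le \|b-\id\|+\|b^{-1}-\id\|\le R_++1\}$. Compactness of this annulus and of $[R_-,R_+]$ then gives a single $\delta$. I expect this step to be the real difficulty. The function $b\mapsto\|b-\id\|+\|b^{-1}-\id\|$ restricted to a general closed subgroup $G$ may have local minima away from $\id$; in the extreme case of a discrete $G$, the inclusion can fail at radii $R$ realised by group elements. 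So I would check carefully which structure of $G$ (e.g.\ connectedness or the Lie-group structure, or a slight reformulation with closed balls) is actually needed here. If necessary, I would replace the balls by an increasing family of precompact open sets $\cZ_R$ satisfying $\overline{\cZ_{R}}\subset\bigcup_{\delta>0}\cW_0\,\cZ_{R-\delta}$, which is exactly the input that the downstream Lemma~\ref{lem:IFS-multi} requires.
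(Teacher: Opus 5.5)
\textbf{There is a genuine gap, and it cannot be filled: the statement is false as stated, and the paper's proof has the same gap.}

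Your Steps 1 and 2 are sound, provided you take $\cW_0$ symmetric so that $guc^{-1}\in\cU$. Step 3, however, is only a heuristic, and no argument can complete it: your suspicion is right. Write $N(g):=\|g-\id\|+\|g^{-1}-\id\|$.

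The paper's proof hides exactly this point. It asserts that the compact sets $\overline{\cU\cB_{R+\delta}}$ decrease to $\overline{\cU\cB_R}$. Their intersection is in fact $\overline{\cU}\cdot\{g\in G: N(g)\le R\}$. A local minimum $b$ of $N|_G$ with $N(b)=R$ lies in $\{N\le R\}$ but not in $\overline{\cB_R}$, so this intersection need not be contained in $\cG^{-1}\cU\cB_R$.

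This already happens for $G=\gldr$. If $\det g<0$, then $g$ has a real eigenvalue $\lambda<0$. Hence $N(g)\ge 2+|\lambda|+|\lambda|^{-1}\ge 4$, so $\cB_4\subset \mathrm{GL}^+(d,\R)$. On the other hand, $r=\mathrm{diag}(-1,1,\dots,1)$ has $N(r)=4$. Take $\cU=\cB_1$ and a finite $\cG\subset\overline{\cB_1}$ with $\overline{\cU}\subset\cG^{-1}\cU$, which exists by compactness since $\id\in\cU$. Then $r\in\cU\cB_{4+\delta}$, but $\cG^{-1}\cU\cB_{4-\delta}\subset\mathrm{GL}^+(d,\R)$ for every $\delta>0$. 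So any interval containing $4$ gives a counterexample.

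Connectedness does not rescue the statement, so neither structure you propose to check is the right one. Consider the closed circle $G=\{\mathrm{Rot}_t\oplus\mathrm{Rot}_{2t}\}\subset \mathrm{SO}(4)\subset \mathrm{SL}(4,\R)$. There $N=4\max(|\sin\tfrac t2|,|\sin t|)$. This has a strict local minimum $2\sqrt3$ at $t=2\pi/3$, while $\cB_{2\sqrt3}=\{|t|<\pi/3\}$. Hence any $\cU,\cG$ contained in a short arc around $\id$ violate the inclusion at $R=2\sqrt3$.

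\textbf{What is needed, and how your route then closes.} The missing hypothesis is that no $b\in G$ with $N(b)\in[R_-,R_+]$ is a local minimum of $N|_G$. Equivalently, $\overline{\cB_R}=\{g\in G: N(g)\le R\}$ for these $R$. Under this hypothesis the paper's compact/open argument is correct as written, and your route also closes, as follows.
\begin{enumerate}
\item The map $b\mapsto N(b)-\inf_{c\in\cW_0}N(cb)$ is lower semicontinuous and positive on the compact set $\{R_-\le N\le R_+\}$.
\item Hence it is bounded below by some $\eta>0$ on $\{R_--\epsilon\le N\le R_++\epsilon\}$ for a small $\epsilon>0$.
\item Any $\delta<\min(\eta/2,\epsilon)$ then gives $\cB_{R+\delta}\subset\cW_0\cB_{R-\delta}$ uniformly in $R\in[R_-,R_+]$.
\end{enumerate}
Note that your annulus $\{R_--1\le N\le R_++1\}$ may contain $\id$, where no decrease is possible; that is why the thickening must be small.

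\textbf{Comparison with the paper.} Your decomposition has the merit of isolating the obstruction as a property of $G$ alone, a shrinking property of the balls. But you would still have to verify that property for the groups that matter. The exp/polar heuristic does not do this: $\exp$ is not onto $\sldr$, and nothing shows that $N$ decreases along such paths.

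\textbf{A cleaner repair.} The better fix is the one you hint at. For connected $G$, replace $\cB_r$ by the balls $\cZ_r$ of a right-invariant Riemannian metric. Cutting an almost-minimizing path gives $\cZ_{R+\delta}\subset\cZ_{3\delta}\cZ_{R-\delta}$ for small $\delta$. This is your shrinking property, obtained without any compactness argument. Proposition~\ref{prop:cover-smaller-set} survives because these balls are precompact and exhaust $G$. Disconnected groups such as $\gldr$ still need separate handling of the components.
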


\begin{proof}  Fix an arbitrary $R>0$. We claim that there exists some $\delta_R>0$ such that inclusion \eqref{eq:covering-bigger-smaller} holds for all $\delta \in (0, \delta_R]$. 
In view of Remark \ref{rmk:covering-improvement}, the covering condition \eqref{eq:covering-IFS} implies 
$$ \overline{\,\cU \, \cB_R\,} \subset \cG^{-1} \, \cU \, \cB_R. $$
Observe that as $\delta \searrow 0$, the sets $\{\cG^{-1} \cU \cB_{R-\delta}\}_{\delta>0}$ form an increasing family of open sets with union $\cG^{-1} \cU \cB_{R}$. Conversely, $\{\overline{\cU\cB_{R+\delta}}\}_{\delta>0}$ is a decreasing family of compact sets with intersection 
$ \overline{\cU\, \cB_{R}}$. 
Since the intersection of this descending compact family is contained within the union of the ascending open family, a standard compactness implies that there exists $\delta_R>0$ (smaller than $R$) such that \eqref{eq:covering-bigger-smaller} holds for this specific $R$ whenever $\delta \leq \delta_R$. 
Note that this choice of $\delta_R$ has a local uniformity. Indeed, for every $R'\in I_R := (R-\frac{1}{2}\delta_R, R+\frac{1}{2}\delta_R)$ and any positive $\delta' \leq \frac{1}{2}\delta_R$,
$$ \cU\, \cB_{R'+\delta'} \subset \cU\, \cB_{R+\delta_R} \subset \cG^{-1} \, \cU\,\cB_{R-\delta_R} \subset \cG^{-1} \, \cU\,\cB_{R'-\delta'}. $$

Finally, the intervals $I_R$ for $R\in [R_{-}, R_{+}]$ form an open cover of the compact set $[R_{-}, R_{+}]$ and so has a finite subcover corresponding to $R_1, \dots, R_m$. Therefore, taking $\delta := \min_{1 \leq i \leq m} \frac{1}{2}\delta_{R_i} > 0$ completes the proof of the proposition.
\end{proof}

\begin{proposition}\label{prop:cover-smaller-set}
Let $G$ be a closed subgroup of $\gldr$ and  $\cU,\cS$ be non-empty subsets of $G$ with compact closure. Then, for every $r>0$, there exists $\eta=\eta(\cU,\cS,r)>0$ such that for every $s\in \cS$, 
$\cU\cB_{r} \subset s \cU\cB_{\eta}$.
\end{proposition}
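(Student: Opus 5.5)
We need: for every $r>0$ there is $\eta$ with $\cU\cB_r\subset s\,\cU\cB_\eta$ for all $s\in\cS$, i.e. $s^{-1}\cU\cB_r\subset \cU\cB_\eta$. The plan is to show that a single large $\eta$ works by making $\cU\cB_\eta$ swallow the compact set $\overline{\cS}^{-1}\,\overline{\cU}\,\overline{\cB_r}$.

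First I will record that all relevant sets are precompact. $\overline{\cB_r}$ is contained in $\{g:\|g\|\le 1+r,\ \|g^{-1}\|\le 1+r\}$, which is compact in $\gldr$ and intersects the closed subgroup $G$ in a compact set. Inversion is a homeomorphism, so $\overline{\cS}^{-1}$ is compact, and products of compact sets are compact. Hence
$$K:=\overline{\cS}^{-1}\,\overline{\cU}\,\overline{\cB_r}$$
is compact. Each $s^{-1}\cU\cB_r$ with $s\in\cS$ lies in $K$.

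Next, fix any $u_0\in\cU$, which is possible since $\cU\neq\emptyset$. It suffices to find $\eta$ with $K\subset u_0\cB_\eta$, because $u_0\cB_\eta\subset\cU\cB_\eta$. For $g\in K$ write $g=u_0h$ with $h=u_0^{-1}g$. Then
$$\|h-\id\|+\|h^{-1}-\id\|\le \|u_0^{-1}\|\,\|g\|+\|g^{-1}\|\,\|u_0\|+2.$$
The map $g\mapsto \|g\|+\|g^{-1}\|$ is continuous on $G$, so it is bounded on the compact set $K$, say by $M$. Then
$$\eta:=\bigl(\|u_0\|+\|u_0^{-1}\|\bigr)M+3$$
gives $h\in\cB_\eta$, so $K\subset u_0\cB_\eta$.

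Combining these, for every $s\in\cS$ we get $s^{-1}\cU\cB_r\subset K\subset\cU\cB_\eta$. Multiplying on the left by $s$ yields $\cU\cB_r\subset s\,\cU\cB_\eta$. The constant $\eta$ depends only on $\cU,\cS,r$ (through $u_0$ and $M$), as required.

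The only point needing care is precompactness of $\cB_r$ in $G$. This uses that $G$ is closed in $\gldr$ and that the defining norm bound controls both $g$ and $g^{-1}$, which keeps $K$ away from singular matrices. No covering hypothesis is needed; the statement is purely a compactness fact.
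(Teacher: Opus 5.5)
Your proof is correct and is essentially the paper's argument: the paper also fixes an arbitrary $u\in\cU$ and shows $u^{-1}s^{-1}u'b\in\cB_\eta$, using the crude bound $\|h-\id\|\le\|h\|+1$ for both $h$ and $h^{-1}$. The only difference is that you get the norm bound $M$ from compactness of $K$. The paper instead writes the bound explicitly by submultiplicativity, taking $\eta>2+2(r+1)\beta^3$ with $\beta=\sup_{g\in\cS\cup\cU}\|g^{\pm1}\|$, so your compactness detour is valid but not needed.
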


\begin{proof}
Note that for any $b\in \cB_r$, we have $\|b-\id\| \le \|b-\id\| + \|b^{-1}-\id\| < r$, which implies $\|b\|, \|b^{-1}\| < r+1$.
   Now, take $\beta:=\sup_{g\in \cS\cup \cU} \|g^{\pm 1}\|$ and $\eta$ greater than $2 + 2(r+1)\beta^3$.  Therefore, for every $u, u' \in \cU$, every $s \in \cS$, and every $b \in \cB_{{r}}$,  we can bound the distance of $u^{-1}s^{-1}u'b$ and its inverse to the identity:
\begin{align*}
\|u^{-1}{s}^{-1}u' b - \id\| &\leq 1+ \|u^{-1}\|\|{s}^{-1}\|\|u'\| \|b\| < 1 + \beta^3(r+1), \\
\|(u^{-1}{s}^{-1}u' b)^{-1} - \id\| &= \|b^{-1}(u')^{-1}s u - \id\| \leq 1 + \|b^{-1}\|\|(u')^{-1}\|\|s\| \|u\| < 1 + \beta^3(r+1).
\end{align*}
Summing these inequalities yields
$$
\|u^{-1}{s}^{-1}u' b - \id\| + \|(u^{-1}{s}^{-1}u' b)^{-1} - \id\| < 2 + 2\beta^3(r+1) \leq \eta.
$$
Consequently, $u^{-1}s^{-1}u'b \in \cB_{\eta}$, which implies $u'b \in su\cB_{\eta} \subset s\cU \cB_{\eta}$. Finally, as $u'\in \cU$ and $b\in \cB_{r}$ were chosen arbitrarily, $\cU \cB_{r} \subset s\cU \cB_{\eta}$ follows.
\end{proof}

\begin{proof}[{Proof of Lemma \ref{lem:IFS-multi}}]
Since $\cG$ contains at least two elements, we can choose an integer $k_0 \in \N$ such that $|\cG^{\times k_0}| \geq n$. Next, we fix $R_0$ to be a sufficiently large number. Specifically, in the notation of Proposition \ref{prop:cover-smaller-set}, it suffices to pick $R_0 > \eta(\cU, (\cP_{\cG^{\times k_0}})^{-1}, 1)$. 

Applying Proposition~\ref{prop:uniform-delta} to the interval $[1, R_0]$ yields a constant $\delta > 0$ such that for every $R$ in this interval, we have $\cU \cB_{R+\delta} \subset \cG^{-1}\cU \cB_{R-\delta}$. This immediately implies  $\overline{\cU \cB_R} \subset \cG^{-1}\cU \cB_{R-\delta}$. Now, let $m$ be the smallest integer such that $R_0 - m\delta < 1$. It is straightforward to verify by induction on $j$ that 
\begin{equation}\label{eq:better-cover}
   \overline{\cU \cB_{R_0}} \subset (\cP_{\cG^{\times j}})^{-1} \cU \cB_{R_0 - j\delta}, \qquad  \text{for every } j\in \ldbrack 1, m\rdbrack.
\end{equation}
By the choice of $R_0$ and Proposition \ref{prop:cover-smaller-set} (for $\cS=(\cP_{\cG^{\times k_0}})^{-1}$ and $r=1$), we obtain that for every $\uw\in \cG^{\times k_0}$, $\cU\cB_1\subset (\cP_{[\uw]})^{-1}\cU \cB_{R_0}$. Since $R_0-m\delta<1$, it follows that 
\begin{equation}\label{eq:better-cover-m}
\cU \cB_{R_0 - m\delta}\subset \cU \cB_1 \subset (\cP_{[\uw]})^{-1}\cU \cB_{R_0}. 
\end{equation}
Multiplying (from left) both sides of \eqref{eq:better-cover-m}  by $(\cP_{\cG^{\times m}})^{-1}$ and using \eqref{eq:better-cover}, we deduce that for every $\uw\in \cG^{\times k_0}$, and  denoting $\cW_{\uw} := \{\uw\} \times \cG^{\times m}\subset \cG^{\times (k_0+m)}$,
$$ \overline{\cU \cB_{R_0}} \subset (\cP_{\cG^{\times m}})^{-1}\cU \cB_{R_0 - m\delta} \subset (\cP_{\cG^{\times m}})^{-1}(\cP_{[\uw]})^{-1}\cU\cB_{R_0} = (\cP_{\cW_{\uw}})^{-1} \cU \cB_{R_0}. $$

Because the subsets $\cW_{\uw}$ for distinct $\uw \in \cG^{\times k_0}$ are mutually disjoint, and there are at least $n$ such $\uw$, the proof of the lemma is complete by setting $\cV := \cU \cB_{R_0}$ and $k := m + k_0$.
\end{proof}

\subsection{Multiple Covering Condition for Cocycles}

In this subsection, we extend the notions of the covering condition and multiple covering to general locally constant cocycles over SFTs that do not necessarily depend on the zeroth position.

For $\uw_1, \uw_2 \in \cA_H^{\mathrm{fin}}$, we write $\uw_1\prec \uw_2$ if $\uw_1$ is a prefix of $\uw_2$ with strictly smaller length, that is, there exists $\uw\in \cA_H^{\mathrm{fin}}$ such that 
$\uw_1 \uw = \uw_2$. A \emph{chain} (of length $k$) is a sequence $\uw_1,\ldots,\uw_k \in \cA_H^{\mathrm{fin}}$ satisfying $\uw_1\prec\cdots\prec\uw_k$. A subset $\cS\subset \cA_H^{\mathrm{fin}}$ is called an \emph{anti-chain} if it does not contain any pair $\uw_1,\uw_2$ with $\uw_1\prec\uw_2$. 

\begin{definition}[Multiple covering]\label{def:mutiple-covering}
    Let $m\in \N$ and $\sigma:\Sigma_H\to \Sigma_H$ be a subshift of finite type, and let $G$ be a closed subgroup of $\gldr$ with $d\geq 2$. 
    We say that a locally constant $G$-cocycle $(\sigma,A)$ depending on positions in $\ldbrack -k,+k\rdbrack$ satisfies the \emph{multiple covering condition of $m$ layers}
    if there exist:
    \begin{itemize}
        \item a finite family $\cU_1,\cU_2,\ldots,\cU_n$ of non-empty precompact open subsets of $G$,
        \item a subset $\cA'\subset \cA_H^{\times({2k+1})}$,
        \item non-empty subsets $\Lambda^{\jmath}_\imath(\ua) \subset \bigcup\limits_{\ub \in \cA'}T(\ua,\ub)$ for $\ua \in \cA'$, $\imath\in \ldbrack 1,n\rdbrack$, and $\jmath \in \ldbrack 1,m\rdbrack$,
    \end{itemize}
   such that the following properties hold:
    \begin{itemize}
        \item for every $\ua\in \cA'$ and $\imath\in \ldbrack 1, n\rdbrack$, the union $\bigcup_{\jmath=1}^m \Lambda^\jmath_\imath(\ua)$ is an anti-chain;
        \item for every $\ua\in \cA'$, $\imath\in \ldbrack 1,n\rdbrack$, and $\jmath\in \ldbrack 1,m\rdbrack$:
    \begin{equation}\label{eq:multigen-covering}
        \overline{\cU_\imath} \subset \bigcup\limits_{\uw\in \Lambda^\jmath_\imath(\ua)}(A_{[\uw\rangle})^{-1}\cU,\qquad \text{where  }~~~\cU:=\bigcup\limits_{\imath =1}^n \cU_{\imath}.
    \end{equation} 
    \end{itemize}
    Furthermore, we say that the cocycle satisfies the \emph{multiple covering condition} if it satisfies the multiple covering condition of $m$ layers for some $m>1$.
\end{definition}
Unlike the multiple covering condition for linear groups, this condition is inherently piecewise, allowing the choice of maps to depend on the base point. This localized flexibility is essential for adapting the multiple covering condition for cocycles, and yet strong enough for applications. 

Furthermore, as in Remark \ref{rmk:covering-improvement}, we may replace $\{\cU_\imath\}_{\imath}$ with $\{\cU_\imath\cZ\}_{\imath}$ for any non-empty precompact $\cZ \subset G$. Since $\overline{\cU_\imath}$ is compact, we can also replace $\cU$ in \eqref{eq:multigen-covering} with a smaller open set $\cU'$ satisfying $\overline{\cU'} \subset \cU$. Finally, setting $m=n=1$ recovers the covering condition from \cite{NRR1}:

\begin{definition}[{\cite[Definition 4.6]{NRR1}}]\label{def:gen-cov}
In the setting of Definition \ref{def:mutiple-covering}, we say that $(\sigma,A)$ satisfies the \emph{covering condition} if there exist a non-empty precompact open subset $\cU$ of $G$ and a subset $\cA' \subset \cA_H^{\times(2k+1)}$ such that for every $\underline{a} \in \cA'$,
\begin{equation}\label{eq:gen-covering}
      \overline{\cU} \subset \bigcup\limits_{\ub \in  \cA'}\bigcup\limits_{\uw\in T(\ua,\ub)}(A_{[\uw\rangle})^{-1}\cU.
\end{equation} 
\end{definition}

We conclude this section by establishing the equivalence between this covering condition and the multiple covering condition. In the next section we show that these conditions ensure the presence of fiberwise bounded orbits with positive topological entropy, and its stability.

\begin{theorem}[Multiple covering principle]\label{thm:multi-cov}
Let $d\in \N$ and $G$ be a closed subgroup of $\gldr$. 
    A locally constant $G$-cocycle $(\sigma,A)$ over a subshift of finite type of positive topological entropy satisfies the covering condition if and only if it satisfies the multiple covering condition of $m$ layers for some $m>1$. 
\end{theorem}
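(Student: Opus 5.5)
The direction ``multiple covering $\Rightarrow$ covering'' is immediate. Given the data of Definition~\ref{def:mutiple-covering}, I keep only the first layer $\jmath=1$ and use the same $\cA'$ and $\cU=\bigcup_\imath\cU_\imath$. Since $\overline{\cU}=\bigcup_\imath\overline{\cU_\imath}$ and each $\Lambda^1_\imath(\ua)\subset\bigcup_{\ub\in\cA'}T(\ua,\ub)$, inclusion \eqref{eq:multigen-covering} for $\jmath=1$ gives \eqref{eq:gen-covering} directly.

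For the converse I would imitate the proof of Lemma~\ref{lem:IFS-multi}, with the base point now carried along. First, by compactness of $\overline{\cU}$, I would replace each union in \eqref{eq:gen-covering} by a finite set $\Gamma(\ua)\subset\bigcup_{\ub\in\cA'}T(\ua,\ub)$. The proof of Proposition~\ref{prop:uniform-delta} uses only Remark~\ref{rmk:covering-improvement} and compactness, so it should carry over verbatim to this finite, base-point-dependent family. That gives $\delta>0$ with
\[
\cU\cB_{R+\delta}\subset\bigcup_{\uw\in\Gamma(\ua)}(A_{[\uw\rangle})^{-1}\cU\cB_{R-\delta}
\qquad\text{for all } R\in[1,R_0],\ \ua\in\cA'.
\]
Iterating $m_0$ times, where $R_0-m_0\delta<1$, I would build, for each $\ua$, a finite tree of transitions. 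Each node ends at some $\ub\in\cA'$ and branches by $\Gamma(\ub)$, with consecutive words joined by $\vee_{2k+1}$ and products composed via \eqref{eq:composition-transition}. The leaves $\cL(\ua)$ then satisfy $\overline{\cU\cB_{R_0}}\subset\bigcup_{\uw\in\cL(\ua)}(A_{[\uw\rangle})^{-1}\cU\cB_{1}$.

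Next I would use positive entropy to build the layers. By Lemma~\ref{lem:connected}, the graph on $\cA_H^{\times(2k+1)}$ is strongly connected and has a vertex $v$ with two distinct out-edges. Each out-edge extends to a cycle through $v$; call these cycles $c_1,c_2$, of lengths $p,q$. The loops $c_1^{q}$ and $c_2^{p}$ at $v$ both have length $pq$ and are distinct. Concatenating $r$ of them gives $2^r\geq m$ distinct loops $\ell_1,\dots,\ell_m$ at $v$, all of the same length. I fix $\uc\in\cA'$, and for each $\ub\in\cA'$ a transition from $\ub$ to $v$ and one from $v$ to $\uc$. The bridge words $\beta_\jmath(\ub)$ go from $\ub$ to $v$, then around $\ell_\jmath$, then from $v$ to $\uc$. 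There are finitely many of them, so Proposition~\ref{prop:cover-smaller-set} (with $\cS$ the inverses of their products and $r=1$) gives $R_0$ with $\cU\cB_1\subset(A_{[\beta_\jmath(\ub)\rangle})^{-1}\cU\cB_{R_0}$ for all $\ub,\jmath$; this $R_0$ is chosen before $\delta$. I then set $n=1$, $\cU_1=\cV:=\cU\cB_{R_0}$, and let $\Lambda^\jmath_1(\ua)$ be the words $\uw\vee_{2k+1}\beta_\jmath(\ub_\uw)$ for $\uw\in\cL(\ua)$, where $\ub_\uw$ is the terminal block of $\uw$. Composing the two inclusions yields \eqref{eq:multigen-covering} with endpoints in $\cA'$. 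Different $\jmath$ give different words, since the loops differ at the same position.

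The main obstacle is the anti-chain requirement. Suppose $w_1\prec w_2$ with $w_i=\uw_i\vee\beta_{\jmath_i}$. Then $\uw_1$ and $\uw_2$ are both prefixes of $w_2$, hence comparable. If the leaves $\cL(\ua)$ form an anti-chain, they must be equal. Then both bridges start from the same $\ub$, and the two words have equal length, which is a contradiction. So everything reduces to making $\cL(\ua)$ an anti-chain, and for a tree this holds once each $\Gamma(\ub)$ is an anti-chain. To arrange this, I would first try to pad every word of $\Gamma(\ub)$ to a common length $L$. I would append a fixed bounded-length return word to $\ub$ and then a loop of suitable length; this uses aperiodicity-type arguments, or else a choice of lengths modulo the period. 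Distinct words of equal length are automatically an anti-chain. The padding multiplies each product by one of finitely many matrices, and I would absorb this by enlarging $\cU$ to $\cU\cZ$ via Remark~\ref{rmk:covering-improvement} before running the $\delta$-argument. This is the step I expect to need the most care: one must check that the padded family still covers, and that it really has the form required by Remark~\ref{rmk:covering-improvement}.
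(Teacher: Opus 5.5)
Most of your argument is correct: the easy direction, the $\delta$-iteration with $R_0$ fixed before $\delta$ via Proposition~\ref{prop:cover-smaller-set}, the equal-length bridges $\beta_\jmath(\ub)$, and your reduction of the anti-chain property to $\cL(\ua)$ being an anti-chain. The gap is that $\Gamma(\ub)$ need not be an anti-chain, and the padding repair does not work.

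Appending a pad $\pi$ to $\uw$ gives $A_{[\uw\vee\pi\rangle}=A_{[\pi\rangle}A_{[\uw\rangle}$, which is a \emph{left} multiplication. Remark~\ref{rmk:covering-improvement} only lets you enlarge $\cU$ on the \emph{right}. For the padded family to cover some precompact $\cU'$ along the lines you sketch, you would essentially need $A_{[\pi\rangle}\cU'\subset\cU'$. That forces the powers of $A_{[\pi\rangle}$ to stay bounded, which fails in general. A bounded left factor can be absorbed only once, by spending the slack from $\cU\cB_{R_0}$ down to $\cU\cB_1$ gained in the $\delta$-iteration. That is exactly what you do for the bridges, but it cannot be repeated at every level of the tree. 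Padding the leaves instead is a one-time correction. However, the spread of leaf lengths grows with $m_0$, hence with $R_0$, so the norms of the pads cannot be bounded before $R_0$ is chosen, and the argument becomes circular.

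The missing idea is to use $n>1$ in Definition~\ref{def:mutiple-covering}. Its anti-chain requirement is imposed only for each fixed $\imath$, so you never need to put all the leaves into a single $\Lambda$.

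For each $v\in\overline{\cV}$ and each $\ua\in\cA'$, choose one leaf $\uw=\uw(\ua,v)$ with $A_{[\uw\rangle}v\in\cU\cB_1$. Then $v$ lies in the open sets $(A_{[\uw\vee\beta_\jmath(\ub_{\uw})\rangle})^{-1}\cV$ for $\jmath=1,2$. Since $\cA'$ is finite, there is a neighborhood $\cV_v$ whose closure lies in all of these sets. Take a finite subcover $\cU_1,\dots,\cU_n$ of $\overline{\cV}$ and the singletons $\Lambda^\jmath_\imath(\ua):=\{\uw(\ua,v_\imath)\vee\beta_\jmath(\ub)\}$. These satisfy \eqref{eq:multigen-covering}, because $\bigcup_\imath\cU_\imath\supset\cV$. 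For fixed $\imath$, the anti-chain condition then only asks that two distinct words of equal length be incomparable, which your bridges already guarantee.

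This is what the paper does, with $m=2$. In that construction neither the leaves nor $\Gamma$ ever need to be anti-chains.
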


\begin{proof}
    If the locally constant $G$ cocycle $(\sigma,A)$ over the SFT $\sigma:\Sigma_H\to \Sigma_H$ satisfies the multiple covering condition, then it clearly satisfies the covering condition by setting $\cU=\bigcup_{\imath=1}^n\cU_\imath$. 
    To prove the converse, assume that $(\sigma,A)$ depends on positions in $\ldbrack -k,+k\rdbrack$ and satisfies the covering condition with $\cA'\subset \cA_H^{\times(2k+1)}$ and $\cU\subset G$. Throughout the remainder of the proof, we denote $\vee_{2k+1}$ simply by $\vee$. Also, $A_{[\uw\rangle}$ means $A_{[\uw;2k+1\rangle}$. For every $\ua,\ub \in \cA'$, let $\cT_{\ua,\ub}:=\{A_{[\uw\rangle}:\uw \in T(\ua,\ub)\}\subset G$ and
\begin{align}
    \cT_{\ua,\cA'}:=  & \bigcup_{\ub\in \cA'} \cT_{\ua,\ub} 
    \\     \label{eq:concat-transition}
    {\supset}  &\bigcup_{\ub\in \cA'} \cT_{\ub,\cA'} \, \cT_{\ua,\ub}.
\end{align}
Also, we need to consider the directed graph associated with $\sigma:\Sigma_H\to \Sigma_H$ with vertices corresponding to $\cA_{H}^{\times(2k+1)}$. Lemma \ref{lem:connected} ensures the following properties:
    \begin{itemize}
        \item This graph contains a vertex with out-degree of at least two. Let this vertex be associated with $\ud_0\in \cA_{H}^{\times(2k+1)}$, with directed edges to vertices associated with $\ud_1, \ud_2$ (note that these $\ud_i$ are not necessarily in $\cA'$).
        \item The graph is strongly connected. Let $\ell$ denote its directed diameter.
    \end{itemize}

    For every $\ua \in \cA'$ and $i=1,2$, we fix a  transition path from $\ua$ to $\ud_0$, followed by the edge from $\ud_0$ to $\ud_i$, and finally a transition from $\ud_i$ to some element of $\cA'$. Let $\uw^i(\ua)$ denote the concatenation of these paths. Clearly, it is an element of $T(\ua,\cA')=\bigcup_{\ub\in \cA'} T(\ua,\ub)$ with length bounded by $2\ell+1$.

Let $\cS$ be the set of all elements $s$ of $G$ with $\|s^{\pm 1}\|\leq \|A^{-1}\|^{2\ell+1}$.
    Applying Proposition \ref{prop:uniform-delta} to $\cG=\cT_{\ua,\cA'}$ and $\cU$ over the interval $[1,R_0]$, with some arbitrary fixed $R_0>\eta(\cU,\cS,1)>0$ (with notations of Proposition \ref{prop:cover-smaller-set}), we get a positive constant $\delta_{\ua}$. Defining $\delta:= \min_{\ua \in \cA'} \{ \delta_{\ua} \}$ guarantees that for every $\ua\in \cA'$ and $R\in [1,R_0]$,
    \begin{equation}\label{eq:n=1-first}
       {  \overline{\cU \cB_{R}}\subset  \cT_{\ua,\cA'}^{-1}\cU \cB_{R-\delta}.}
    \end{equation}
    Similar to the argument in the proof of Lemma \ref{lem:IFS-multi}, let $m$ be the smallest integer such that $R_0-m\delta <1$. By induction on $j$, we claim that
    \begin{equation}\label{eq:n=1-induction}
        \overline{\cU \cB_{R_0}}\subset  \cT_{\ua,\cA'}^{-1}\cU \cB_{R_0-j\delta}, \qquad \text{for every } \ua\in \cA' \text{ and } j\in \ldbrack1,m\rdbrack.
    \end{equation}
    Indeed, assuming the inclusion holds for $1\leq j<m$, we have $R_0-j\delta\in [1,R_0]$. Thus, for every $\ub\in \cA'$, ${\cU \cB_{R_0-j\delta}}\subset \cT_{\ub,\cA'}^{-1} \cU \cB_{R_0-(j+1)\delta}$. Consequently, by \eqref{eq:concat-transition},
    $$ 
\begin{aligned}
    \overline{\cU \cB_{R_0}} 
    &\subset \cT_{\ua,\cA'}^{-1} \, \cU \cB_{R_0-j\delta} 
    = \bigcup_{\ub\in \cA'} \cT_{\ua,\ub}^{-1} \, \cU \cB_{R_0-j\delta} \\
    &\subset \bigcup_{\ub\in \cA'} \cT_{\ua,\ub}^{-1} \, \cT_{\ub,\cA'}^{-1} \, \cU \cB_{R_0-(j+1)\delta} \\
    &{\subset
     \cT_{\ua,\cA'}^{-1} \, \cU \cB_{R_0-(j+1)\delta}.}
\end{aligned}
$$
In particular, for every $\ua\in \cA'$,  $\overline{\cU \cB_{R_0}}\subset \cT^{-1}_{\ua, \cA'}\cU \cB_{R_0-m\delta}\subset \cT^{-1}_{\ua, \cA'}\cU \cB_{1}$. 
This implies that for every  $\ua \in \cA'$ and every $v\in \overline{\cU\cB_{R_0}}$, there exist $\ub\in \cA'$ and  $\uw\in T(\ua,\ub)$ such that
$A_{[\uw\rangle} v\in \cU \cB_1$.  Since $\|A_{[\uw^i(\ub)\rangle}^{-1}\|\leq \|A^{-1}\|^{2\ell+1}$, the choice of $R_0$ allows us to conclude that by Proposition \ref{prop:cover-smaller-set},
    $$ A_{[\uw\rangle} v\in \cU \cB_1\subset  A_{[\uw^i(\ub)\rangle}^{-1} \cU\cB_{R_0}.$$
Equivalently, denoting $\cU\cB_{R_0}$ by $\cV$, we have $v\in A_{[\uw\vee\uw^i(\ub) \rangle}^{-1} \cV$. 
By the construction of the transition paths $\uw^i(\ub)$, the set $\{\uw\vee\uw^1(\ub),\uw\vee \uw^2(\ub)\}$ forms an anti-chain. Since the choice of $\uw$ depends on both $\ua$ and $v$, we define $\uw^{i}(\ua,v):=\uw\vee\uw^i(\ub)$, for $i=1,2$.  

Because $A_{[\uw^{i}(\ua,v) \rangle}^{-1} \cV$ is an open set containing $v$, there exists an open neighborhood $\cV_v$ of $v$ such that
$$ \overline{\cV_v} \subset A_{[\uw^{i}(\ua,v)\rangle}^{-1} \cV, \quad \text{for } i=1,2. $$

Finally, since $\overline{\cV}$ is compact, we can extract a finite subcover from the neighborhoods $\cV_v$ associated with finitely many points $v_1,\ldots,v_n \in \overline{\cV}$. We denote these open neighborhoods by $\cU_1,\ldots,\cU_n$. Now, the inclusion above can then be recast as follows: for every $\ua\in \cA'$, $\imath \in \ldbrack 1,n\rdbrack$, and $\jmath\in \ldbrack 1, 2 \rdbrack$, we have 
$$ \overline{\cU_\imath}\subset (A_{[\uw^\jmath (\ua,v_\imath)\rangle})^{-1}\cV \subset (A_{[\uw^\jmath (\ua,v_\imath)\rangle})^{-1}(\cU_1\cup \cdots \cup \cU_n). $$
Thus, the requirements of the multiple covering condition (Definition \ref{def:mutiple-covering}) are satisfied for $m=2$ using the sets $\cU_1,\ldots,\cU_n$, the subset $\cA'$, and the singletons $\Lambda^\jmath_{\imath}(\ua):=\{\uw^\jmath (\ua,v_\imath)\}$. This completes the proof.
\end{proof}

\section{The entropy of bounded orbits} \label{sec:entropy}

The goal of this section is to prove Theorem \ref{thm:C} which asserts that as a consequence of the multiple covering condition introduced in Section \ref{sec:multiple-covering}, the set of points with fiberwise bounded full orbit carries positive topological entropy.

To prove this theorem, we follow an approach analogous to \cite[Theorem 5.5]{NRR1} to establish the existence of a certain set of fiberwise bounded orbits. The proof uses the multiple covering condition, which allows multiple choices for extending a finite sequence. This branching property allows us to embed a full shift into the space of fiberwise bounded orbits. The construction of this embedding proceeds inductively and is based on the following proposition.

\begin{proposition}\label{prop:stability-H\"older-calculation}
    In the setting of Theorem \ref{thm:C} for $G$ and $(\sigma,A)$, if $(\sigma,A)$ satisfies the multiple covering condition of $m$ layers for some $m>1$, then there exists $\varepsilon>0$ such that for every $\alpha$-H\"older map $\tilde{A}:\Sigma_H\to G$ with $d_{\cC^\alpha}(A,\tilde{A})<\varepsilon$, we can find:
     \begin{itemize}
        \item a positive integer $L$,
        \item a precompact non-empty open set $\cV\subset G$, 
        \item a sequence $\{n_{i,j}\}$ of integers indexed by $i \in \N\cup\{0\}$ and $j\in \ldbrack 0, m^i-1\rdbrack$, and 
        \item a sequence $\{x_{i,j}\}$ of elements of $\Sigma_H$ indexed by $i \in \N\cup \{0\}$ and $j\in \ldbrack 0, m^i-1\rdbrack$,
     \end{itemize}
       such that for all $i\geq 0$ and $j\in \ldbrack 0, m^i-1\rdbrack$, letting $W_{i,j}:=W^u_{loc}\big(\sigma^{n_{i,j}}({x_{i,j}})\big)$, the following properties hold:
\begin{itemize}
    \item[(i)] (Nestedness and bounded gaps) For every $j'$ with $m j\leq j'< m(j+1)$,
    $$
    0<n_{i+1,j'}-n_{i,j}\leq L, \quad \text{and}\quad \sigma^{-n_{i+1,j'}}(W_{i+1,j'}) \subset \sigma^{-n_{i,j}}(W_{i,j}).
    $$
    \item[(ii)] (Disjointness) For every fixed $i\geq 0$, the sets $\sigma^{-n_{i,j}}(W_{i,j})$ for $j\in \ldbrack 0 , m^i-1\rdbrack$ are mutually disjoint.
   \item[(iii)] (Fiberwise boundedness at certain iterates) For every $y\in \sigma^{-n_{i,j}}(W_{i,j})$, there exists a sequence of times $0 = t_0 < t_1 < \ldots < t_i = n_{i,j}$ such that for every $l=1,\ldots,i$ we have $t_l - t_{l-1} \leq L$ , and moreover 
$$ \tilde{A}_{t_l}(y)\in \cV. $$
\end{itemize}
\end{proposition}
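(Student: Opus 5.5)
We plan to run the inductive construction of \cite[Theorem 5.5]{NRR1}, now branching into $m$ children at each step thanks to the $m$ layers of Definition \ref{def:mutiple-covering}. First, we enlarge the sets $\cU_\imath$ to get a safety margin. Arguing as in Proposition \ref{prop:uniform-delta}, we find precompact open sets $\cU'_\imath$ with $\overline{\cU_\imath}\subset \cU'_\imath$, together with a $\rho>0$, such that for every $\ua\in\cA'$, $\imath$, $\jmath$ and every $g\in \overline{\cU'_\imath}$ there is $\uw\in\Lambda^\jmath_\imath(\ua)$ with $\dist(A_{[\uw\rangle}g, G\setminus\cU)>\rho$ (distance measured in a left-invariant-comparable way, e.g.\ $A_{[\uw\rangle}g\,\cB_\rho\subset\cU$ in the form of Remark \ref{rmk:covering-improvement}). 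Since $\overline{\cU}$ is compact, by a finite subcover we may shrink or re-index so that every point of $\cU$ lies in some $\cU'_\imath$. We then set $\cV:=\cU$. We set $L$ to be the maximal length of the (finitely many, after passing to finite subfamilies by compactness) words in all $\Lambda^\jmath_\imath(\ua)$, plus $2k$.

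The induction works as follows. Choose $x_{0,0}\in\cylinder[-k;\ua_0]$ with $\ua_0\in\cA'$ and $n_{0,0}=0$, arranged so that $A_0=\id\in\cV$; if necessary we first translate $\cU$ by Remark \ref{rmk:covering-improvement} so that it contains $\id$. Given the cylinder $C_{i,j}:=\sigma^{-n_{i,j}}(W_{i,j})$, its points $y$ all share the coordinates $0,\ldots,n_{i,j}+k$ (and their past up to $n_{i,j}$). The coordinates ending at time $n_{i,j}$ spell some $\ua\in\cA'$. Pick a representative $g=\tilde A_{n_{i,j}}(y_{i,j})\in\cV$ and an index $\imath$ with $g\in\cU'_\imath$. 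For each $\jmath\in\ldbrack1,m\rdbrack$, choose $\uw_\jmath\in\Lambda^\jmath_\imath(\ua)$ realizing the margin, and define the child $(i+1,mj+\jmath-1)$ by extending the word of $C_{i,j}$ by $\uw_\jmath$. This fixes $n_{i+1,j'}=n_{i,j}+|\uw_\jmath|-2k-1\le n_{i,j}+L$ and nests the cylinders, giving (i). Disjointness (ii) follows from the anti-chain property of $\bigcup_\jmath\Lambda^\jmath_\imath(\ua)$: distinct children diverge at some coordinate, and children of distinct parents inherit disjointness by induction. Since $\uw_\jmath\in T(\ua,\ub)$ with $\ub\in\cA'$, the induction continues.

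The main obstacle is (iii) for every $y$ in the cylinder and for the perturbed $\tilde A$, uniformly in $i$. Points of $C_{i,j}$ share only the future coordinates up to $n_{i,j}+k$, so $\tilde A_{t}(y)$ genuinely depends on $y$. Following the H\"older estimates of \cite{NRR1}, we handle this by controlling the product along a block $[t_{l-1},t_l]$ as a left multiplication: $\tilde A_{t_l}(y)=\tilde A_{t_l-t_{l-1}}(\sigma^{t_{l-1}}y)\,\tilde A_{t_{l-1}}(y)$. Here $\sigma^{t_{l-1}}y$ agrees with the cylinder word on $\ldbrack -t_{l-1}, n_{i,j}-t_{l-1}+k\rdbrack$, so on a block of length $\le L$ located at distance $r$ from the end, $\tilde A$ differs from the locally constant $A_{[\uw\rangle}$ by $O(\varepsilon+|\tilde A|_{\cC^\alpha}2^{-\alpha r})$. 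The difficulty is that the blocks near the current end are only as accurate as $\varepsilon$, and later refinements change them. To address this, we will track the error in right-multiplicative form: $\tilde A_{t_l}(y)\in A_{[\uw_l\rangle}\cdots A_{[\uw_1\rangle}\,h$ is replaced by $g_l\,E_l$ with $E_l\in\cB_{c}$. We will show that the accumulated error is a product of factors $\id+O(\varepsilon+2^{-\alpha r})$ conjugated by bounded elements; boundedness holds because each $g_l\in\cV$, which is precompact. Summing the geometric series in $r$ gives an error $<\rho$ when $\varepsilon$ is small and the margins are chosen with a reserve. The key is that conjugating the local error from step $l$ by $g_l\in\overline{\cV}$ keeps constants uniform, and errors from earlier steps do not propagate multiplicatively through later ones, precisely because each $\tilde A_{t_l}(y)$ is re-centered in $\cV$ and the choice of $\uw$ was made with margin $\rho$ for the whole $\overline{\cU'_\imath}$. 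If this re-centering argument leaks, we will fall back on reproducing \cite[Theorem 5.5]{NRR1} verbatim, with its block-length conditions and large $R$, only replacing its single continuation by the $m$ continuations supplied by the multiple covering.
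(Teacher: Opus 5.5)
Your scheme is the paper's: an $m$-ary version of the induction of \cite[Proposition 5.6]{NRR1}, a margin obtained as in Proposition \ref{prop:uniform-delta}, disjointness from the anti-chain property, and $t_l$ taken as the ancestor times. You are even more explicit than the paper that the continuation words must be chosen once for the whole cylinder $C_{i,j}$, via a representative $g$. The gap is in the estimate you commit to for (iii).

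You put the H\"older decay on the deviation of $\tilde A$ from the model product $A_{[\uw\rangle}$, with coefficient $|\tilde A|_{\cC^\alpha}$, and then sum factors $\id+O(\varepsilon+2^{-\alpha r})$ over all blocks. As written, this does not become small as $\varepsilon\to0$. First, $|\tilde A|_{\cC^\alpha}$ is comparable to $|A|_{\cC^\alpha}$, which is of order one because $A$ takes different values on different cylinders; so the $2^{-\alpha r}$ terms contribute $O(1)$ independently of $\varepsilon$. Second, a non-decaying $O(\varepsilon)$ per block, summed over about $i$ blocks, grows with $i$.

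Re-centering does not rescue this. It stops the model deviations from accumulating; this is the paper's mechanism $\cU\cB_R\to\cU\cB_{R-\delta}\to\cU\cB_R$. But $\tilde A_{n_{i,j}}(y)$ also depends on the still-free future coordinates of $y\in C_{i,j}$, and this dependence comes from every earlier block. It is exactly this spread that must fit inside $\rho$ for the word chosen from $g$ to serve all of $C_{i,j}$. The margin ``for the whole $\overline{\cU'_\imath}$'' only gives each point \emph{some} word, not a common one.

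The missing idea is to separate the two errors.
\begin{itemize}
\item The deviation from the model is a pure $\cC^0$ matter and concerns only the newest block: $\tilde A_\ell(\sigma^{n_{i,j}}y)=e_yA_{[\uw\rangle}$ with $\|e_y-\id\|=O_L(\varepsilon)$.
\item H\"older regularity enters only through the spread, and it is the H\"older norm of $\tilde A-A$, not of $\tilde A$, that matters. For $y,y'\in C_{i,j}$ and $s<n_{i,j}$, the window $\ldbrack s-k,s+k\rdbrack$ is fixed, so $A(\sigma^sy)=A(\sigma^sy')$. Hence $\|\tilde A(\sigma^sy)-\tilde A(\sigma^sy')\|=\|(\tilde A-A)(\sigma^sy)-(\tilde A-A)(\sigma^sy')\|<\varepsilon\,2^{-\alpha(n_{i,j}-s)}$.
\end{itemize}
With $n=n_{i,j}$, telescope $\tilde A_{n}(y)-\tilde A_n(y')=\sum_{s<n}\tilde A_{n-s-1}(\sigma^{s+1}y)\,[\tilde A(\sigma^sy)-\tilde A(\sigma^sy')]\,\tilde A_s(y')$. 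Bound the outer factors by $\kappa^2$ and $\kappa$ using the inductive hypothesis, the gaps $\le L$, and Lemma \ref{lem:b-properties}(i). This gives $\tilde A_n(y)=\tilde A_n(y')E$ with $\|E-\id\|\le\kappa^4\varepsilon/(2^\alpha-1)$, uniformly in $i$. For $\varepsilon\le1$, $\kappa$ depends only on $\overline{\cV}$, $L$ and $A$. Writing $e_yA_{[\uw\rangle}gE=A_{[\uw\rangle}g\,\bigl((A_{[\uw\rangle}g)^{-1}e_yA_{[\uw\rangle}g\bigr)E$ then shows both errors fit into $\rho$ once $\varepsilon$ is small.

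Relatedly, $\sigma^{-n_{i,j}}(W_{i,j})$ as literally defined fixes only the coordinates $\le n_{i,j}$, not those up to $n_{i,j}+k$ as you assume. Work instead with the cylinders $\cylinder[-k;\uw_{1,j_1}\vee\cdots\vee\uw_{i,j}]\cap W^u_{loc}(x_0)$ used in the paper's claim.
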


The proof of this proposition follows the same line of argument as \cite[Proposition 5.6]{NRR1}. We provide the main steps here for completeness. Unlike the aforementioned proposition, which was established for cocycles over full shifts, adapting the argument to subshifts of finite type and ensuring a positive entropy bound requires the use of double indices.

\begin{proof} 
Suppose that the cocycle $(\sigma,A)$ depends on positions in $\ldbrack -k,+k\rdbrack$ and is represented by a map $\phi:\cA^{\times(2k+1)}\to G$. Furthermore, assume that $(\sigma,A)$ satisfies the multiple covering condition of $m$ layers with respect to precompact open sets $\cU_1,\ldots,\cU_n\subset G$ with union $\cU$, a subset $\cA'\subset \cA_H^{\times(2k+1)}$, and families of words $\Lambda^{\jmath}_\imath(\ua)$ for $\jmath\in\ldbrack 1,m\rdbrack$ and $\imath\in \ldbrack 1, n\rdbrack$. Note that by the compactness of the sets $\cU_\imath$, we may assume that each $\Lambda^{\jmath}_\imath(\ua)$ is

finite. Additionally, by Remark \ref{rmk:covering-improvement}, we may suppose that each $\cU_\imath$ contains the identity element. 

Fix some $R>0$. In view of Proposition \ref{prop:uniform-delta}, there is a constant $\delta>0$ such that for every $\ua\in \cA'$, every $\imath \in \ldbrack 1,n\rdbrack$, and every $\jmath \in \ldbrack 1,m\rdbrack$, we have
    \begin{equation}\label{eq:smaller-u-t}
        \overline{\cU_\imath \cB_R} \subset \bigcup\limits_{\uw\in \Lambda^{\jmath}_\imath(\ua)}(A_{[\uw\rangle})^{-1}\cU \cB_{R-\delta}.
    \end{equation}

Let $L$ be the maximum length of the elements in $\Lambda^\jmath_{\imath}(\ua)$ over all $\ua\in \cA'$, $\imath\in \ldbrack 1,n\rdbrack$, and $\jmath \in \ldbrack 1, m\rdbrack$. 

We now proceed to build the required sequences inductively. First, pick an arbitrary element $\ua_0 \in \cA'$ and let $x_{0,0}$ be any point in $\cylinder[-k;\ua_0]\cap \Sigma_H$. We also set $n_{0,0}=0$.

The following claim establishes the induction step. Note that we write $\vee$ instead of $\vee_{2k+1}$ for simplicity. 

\medskip

    \noindent
    \textbf{Claim.} 
    There exists $\varepsilon>0$ and a family of words $\ua_{i,j}$ for $i \geq 0$ and $j \in \ldbrack 0, m^i - 1 \rdbrack$ such that for every $\alpha$-H\"older cocycle $(\sigma,\tilde{A})$ with $d_{\cC^\alpha}(A,\tilde{A})<\varepsilon$, and for all $i\in \N$ and $j\in \ldbrack 0, m^i-1\rdbrack$, 
    $$ \tilde{A}_{n_{i,j}}(y)\in \cU\cB_{R} \quad \text{for every } y\in \cylinder[-k;\uw_{1,j_1}\vee\cdots \vee \uw_{i,j}]\cap W^u_{loc}(x_0), $$
    where $j_i=j$, $j_{{l}}= \lfloor \frac{j_{{l}+1}}{m} \rfloor$ for ${l}<i$, $\uw_{{l},j_{{l}}} \in T(\ua_{{l}-1,j_{{l}-1}},\ua_{{l},j_{{l}}})$, and $n_{i,j}:=-k+\sum_{{l}\leq i}|\uw_{{l},j_{{l}}}|$. 
    Moreover, for every $i \geq 0$ and every $0 \leq j < j' < m^i$, 
    $$ \cylinder[-k;\uw_{1,j_1}\vee\cdots \vee \uw_{i,j}]\cap \cylinder[-k;\uw_{1,j'_1}\vee\cdots \vee \uw_{i,j'}]=\emptyset.$$

   The proof of the claim proceeds by induction on $i$. The base case $i=0$ is trivial. Now, assume the claim holds for some integer $i \geq 0$, then we will establish it for $i+1$. Let $j \in \ldbrack 0, m^i-1\rdbrack$ be an index from the $i$-th step, associated with the letter $\ua_{i,j} \in \cA'$. By the inductive hypothesis, for $y$ in the specified cylinder, $\tilde{A}_{n_{i,j}}(y) \in \cU\cB_{R}$. Since $\cU = \bigcup_{\imath=1}^n \cU_\imath$, there exists some $\imath \in \ldbrack 1, n\rdbrack$ such that $\tilde{A}_{n_{i,j}}(y) \in \cU_\imath$.
By equation \eqref{eq:smaller-u-t}, for this $\ua_{i,j}$ and $\imath$, we have $m$ families of transitions $\Lambda^\jmath_\imath(\ua_{i,j})$ indexed by $\jmath \in \ldbrack 1, m\rdbrack$. 
We define the $m$ indices associated with $j$ at step $i+1$ by $j' = m j + \jmath - 1$ (for $\jmath \in \ldbrack 1, m\rdbrack$). For each such $j'$, we select a word $\uw_{i+1, j'} \in \Lambda^\jmath_\imath(\ua_{i,j})$ and denote its terminal symbol in $\cA'$ by $\ua_{i+1, j'}$.

By choosing $\varepsilon > 0$ sufficiently small, we guarantee that for any $\alpha$-H\"older cocycle $\tilde{A}$ with $d_{\cC^\alpha}(A,\tilde{A}) < \varepsilon$, every fiberwise product of $\tilde{A}$ of length at most $L$ is uniformly close to the corresponding product for $A$. Specifically, we can ensure that the perturbation over the transition word $\uw_{i+1, j'}$ is bounded by $\delta$. Combined with the inclusion \eqref{eq:smaller-u-t}, this guarantees that for the perturbed cocycle $\tilde{A}$, we have $\tilde{A}_{n_{i+1,j'}}(y) \in \cU\cB_R$ as well. The analysis of this step is completely analogous to the arguments presented in the proof of \cite[Proposition 5.6]{NRR1}. 

Finally, the disjointness of the cylinders at step $i+1$ follows immediately from the construction, as for a fixed branch $j$, the $m$ distinct transition sets $\Lambda^\jmath_\imath(\ua_{i,j})$ generate mutually disjoint extensions within the subshift of finite type.

This completes the proof by setting $\cV:=\cU \cB_R$. Furthermore, to verify property (iii), for any given valid seque

nce of branches leading to $j$, we define the sequence of intermediate times by $t_l := n_{l,j_l}$ for $l=1, \dots, i$, where $j_l$ represents the ancestor index at step $l$.
\end{proof}

\begin{figure}
    \centering
    \includegraphics[width=\linewidth]{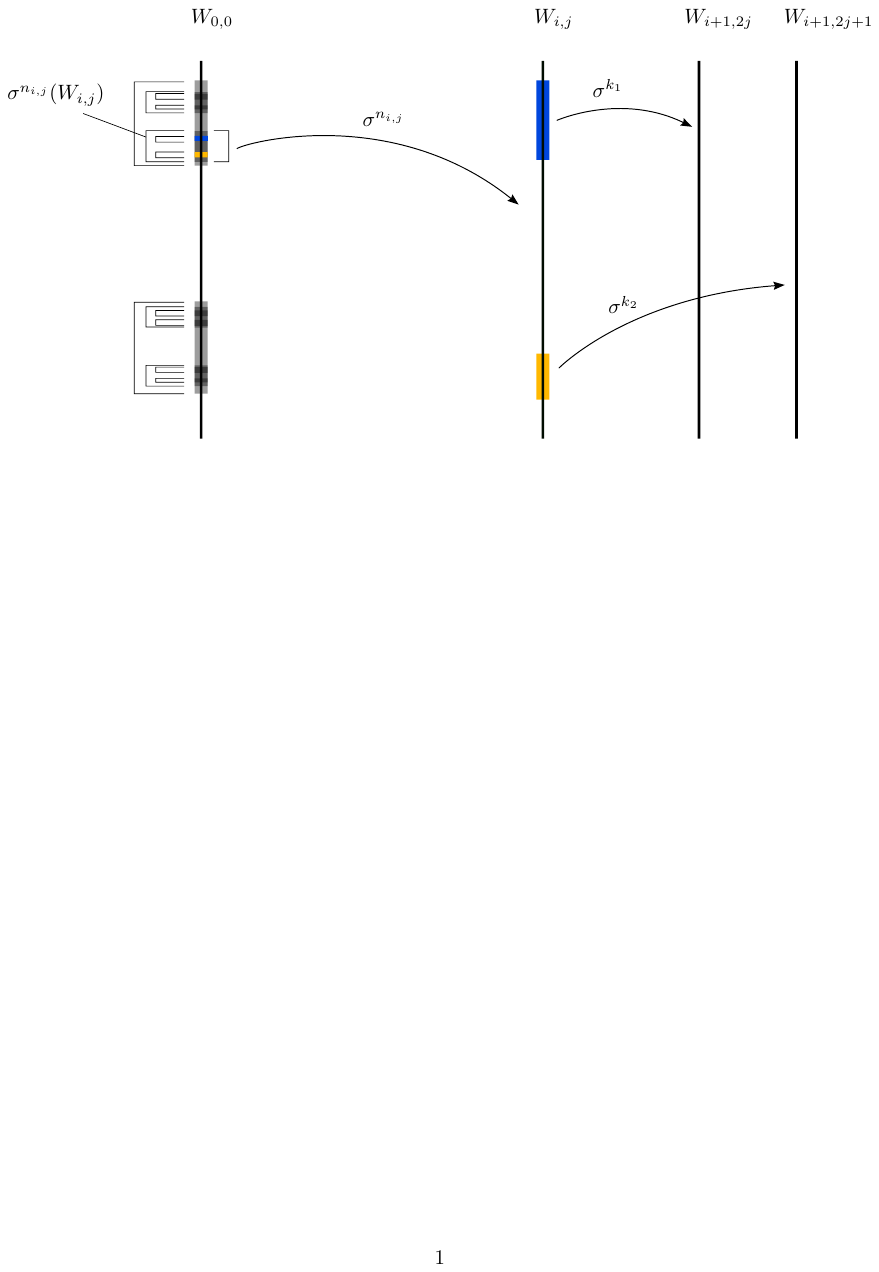}
    \caption{Induction process to define $W_{i+1, j'}$ from $W_{i, j}$. 
    Illustration of the family of nested intervals generating a Cantor set of points in $W_{0,0}$ with bounded fiberwise orbits.
    In this figure, $m=2$, and $n_{i+1, 2j}:=n_{i, j}+k_1$, $n_{i+1, 2j+1}:=n_{i, j}+k_2$.}
    \label{fig:nested-intervals}
\end{figure}

\begin{proof}[Proof of Theorem \ref{thm:C}]
    Suppose that $(\sigma,A)$ satisfies the hypotheses of the theorem. By Proposition \ref{prop:stability-H\"older-calculation}, there exists $\varepsilon > 0$ such that for every $\alpha$-H\"older cocycle $(\sigma, \tilde{A})$ with $d_{\cC^{\alpha}}(A, \tilde{A}) < \varepsilon$, we find:
    \begin{itemize}
        \item a positive integer $L$,
        \item a precompact non-empty open set $\cV\subset G$,
        \item sequences of integers $\{n_{i,j}\}$ and points $\{x_{i,j}\}$ in $\Sigma_H$, both indexed by $i \geq 0$ and $j \in \ldbrack 0, m^i-1 \rdbrack$. 
    \end{itemize}
    Setting $W_{i,j}:=W^u_{loc}\big(\sigma^{n_{i,j}}({x_{i,j}})\big)$ satisfies the properties in the conclusion of Proposition \ref{prop:stability-H\"older-calculation}. 

    Let $\Sigma' = \{0, \dots, m-1\}^\N$ be the full shift on $m$ symbols. For any infinite sequence $s = (s_1, s_2, \dots) \in \Sigma'$, we inductively define a sequence of indices $\{j_i(s)\}_{i\geq 0}$ by setting $j_0(s) = 0$ and $j_i(s) = m j_{i-1}(s) + s_i$ for $i \geq 1$. By property (i) of Proposition \ref{prop:stability-H\"older-calculation}, the sets $\sigma^{-n_{i,j_i(s)}}(W_{i,j_i(s)})$ are compact and nested, meaning their intersection is non-empty. Since the diameters of these preimages shrink to zero, the intersection defines a unique point. We define the map $\Phi: \Sigma' \to \Sigma_H$ by
    $$ \Phi(s) := \bigcap_{i \geq 0} \sigma^{-n_{i,j_i(s)}}(W_{i,j_i(s)}). $$
    By the disjointness of the sets at each level $i$ (property (ii)), $\Phi$ is injective. Moreover, sequences in $\Sigma'$ that agree up to the $i$-th position are mapped into the same compact set $\sigma^{-n_{i,j_i(s)}}(W_{i,j_i(s)})$. Thus, $\Phi$ is continuous, making it a topological embedding of $\Sigma'$ into $\Sigma_H$. We denote the image of this  embedding 
    \[X := \Phi(\Sigma').\]

    We now show that for any $x \in X$, the sequence $\{\|\tilde{A}_n(x)^{\pm 1}\|\}_{n\in \N}$ is bounded. By property (iii) of Proposition \ref{prop:stability-H\"older-calculation}, there exists an infinite sequence of times $0 = t_0 < t_1 < t_2 < \cdots$ satisfying $t_l - t_{l-1} \leq L$ such that $\tilde{A}_{t_l}(x) \in \cV$ for all $l \geq 1$. Therefore,
    $$ \|\tilde{A}_{t_l}(x)^{\pm 1}\| \leq \kappa_{\overline{\cV}} := \sup\{\|v^{\pm 1}\| \colon v \in \overline{\cV}\}. $$

    Now take any $n \in \N$. There exists some $l \geq 0$ such that $t_l \leq n < t_{l+1}$. Using the cocycle property $\tilde{A}_n(x)=\tilde{A}_{n-t_l}(\sigma^{t_l}(x))\tilde{A}_{t_l}(x)$, we can bound $\|\tilde{A}_n(x)\|$. Since $n - t_l < L$, the norm of the first factor is bounded uniformly by noting that $d_{\cC^{\alpha}}(A, \tilde{A}) < \varepsilon$. Indeed, letting $M_A := \sup_{y \in \Sigma_H} \|A(y)^{\pm 1}\|$, we have 
    \[\|\tilde{A}_{n-t_l}(\sigma^{t_l}(x))^{\pm 1}\|\leq (M_A+\varepsilon)^{n-t_l}\leq M^L,\]
    where $M := \max\{M_A + \varepsilon,1\}$. Importantly, $M$ depends only on $A$, $\varepsilon$, and $L$, and not on the specific choice of $\tilde{A}$. Therefore, we obtain a uniform bound $\kappa$ for the forward orbit:
        $$ \|\tilde{A}_n(x)^{\pm 1}\| \leq M^L \cdot \|\tilde{A}_{t_l}(x)^{\pm 1}\| \leq M^L \kappa_{\overline{\cV}}. $$
    
    It follows from Lemma \ref{lem:b-properties} parts (ii) and (iii) that the omega-limit set of $X$,
    $$ \omega(X) = \bigcap_{N \geq 0} \overline{\bigcup_{n \geq N} \sigma^n(X)}, $$
    is a closed subset of ${\bfB}(\tilde{A},\kappa)$ for $\kappa:=(M^L \kappa_{\overline{\cV}})^2$, that is invariant under both $\sigma$ and $\sigma^{-1}$. Note that $\omega(X)$ is non-empty as it is the intersection of a nested sequence of non-empty compact sets. Consequently, ${\bfB}(\tilde{A},\kappa)$ is also non-empty. 
    
    Next, we proceed by giving a lower bound for the topological entropy of $\sigma\big|_{\omega(X)}$.  
    Let $\cL_N(\omega(X))$ denote the set of all admissible words of length $N$ that appear in the sequences of $\omega(X)$. A word of length $N$ belongs to $\cL_N(\omega(X))$ if and only if it appears as a subword in $\sigma^n(X)$ for arbitrarily large $n$.
    
    Recall from our inductive construction that the sequences of branching times $\{n_{i,j}\}$ strictly increase to infinity, with gaps $n_{i+1, j'} - n_{i,j}$ bounded by $L$. Consider any time index $T > 0$ arbitrarily large. For any block of length $N$, a point $x \in \sigma^T(X)$ has undergone at least $\lfloor N/L \rfloor$ branching events within that window, because the gap between branching times $n_{i+1, j'} - n_{i, j}$ where $mj\leq j'<m(j+1)$ is strictly bounded by $L$. Hence, the set of distinct words of length $N$ appearing in $\sigma^T(X)$, denoted $\mathcal{L}_N(\sigma^T(X))$, has cardinality at least $m^{\lfloor N/L \rfloor}$. Since the alphabet of the underlying shift space is finite, the total number of possible words of length $N$ is finite, and thus there are only finitely many subsets of words of length $N$. By the pigeonhole principle, there must be a specific set of words $S$, with $|S| \ge m^{\lfloor N/L \rfloor}$, that appears in $\mathcal{L}_N(\sigma^T(X))$ for infinitely many $T$. Thus, $S \subset \mathcal{L}_N(\omega(X))$, which implies:
    $$ |\mathcal{L}_N(\omega(X))| \ge m^{\lfloor N/L \rfloor}. $$

    Now, in view of \eqref{eq:entropy-growth-word}:
    $$ h_{\mathrm{top}}(\sigma\big|_{\omega(X)}) = \lim_{N \to \infty} \frac{1}{N} \log   |\mathcal{L}_N(\omega(X))| \ge \lim_{N \to \infty} \frac{1}{N} \log   \left( m^{\lfloor N/L \rfloor} \right) = \frac{1}{L}\log   m =: \gamma. $$
    Since $m \ge 2$ and $L > 0$, we have $\gamma > 0$. Finally, because $\omega(X) \subset {\bfB}(\tilde{A},\kappa)$ we conclude
    $ h_{\mathrm{top}}(\sigma\big|_{{\bfB}(\tilde{A},\kappa)}) \ge h_{\mathrm{top}}(\sigma\big|_{\omega(X)}) \ge \gamma > 0, $
    which completes the proof.
\end{proof}

\begin{remark}\label{rmk:Hausdorff-dim}Due to the well-known relationship between Hausdorff dimension and topological entropy, our results regarding topological entropy have natural analogues in terms of Hausdorff dimension. Indeed, for any closed subset $X \subset \Sigma_\cA$ that is both forward and backward invariant under $\sigma$, we have
\[\dim_H(X)=\frac{h_{\mathrm{top}}(\sigma|_X)}{\log 2}.\]
The factor of $\log 2$ in the denominator arises from the base $2$ we used to define the metric on $\Sigma_\cA$ in \eqref{eq:dist-shift-space}. The full invariance of $X$ is crucial and cannot be reduced to the assumption of forward invariance alone. For instance, the stable set of a periodic point is merely forward-invariant and possesses a positive Hausdorff dimension, yet the topological entropy of its restricted dynamics is zero. In this context, we emphasize that our results guarantee the existence of a fully invariant set ${\bfB}(A,\kappa)$ exhibiting positive entropy. Furthermore, for locally constant cocycles, the stable set of any point with a fiberwise bounded forward orbit consists entirely of points with the same property; consequently, the positivity of the Hausdorff dimension of $\bigcup_{\kappa>0} {\bfB}(A,\kappa)$ in this setting is guaranteed by the results of \cite{NRR1}. We refer the reader to \cite{PesinClimanhaga,PesinDimesionTheory,LedrappierYoung2} for broader discussions on Hausdorff dimension, its connection to entropy, and related concepts.
\end{remark}

\section{Rigidity}\label{sec:rigidity}
Our goal in this section is to provide the proof of Theorem \ref{thm:upperbound} and to show that the abundance of the set of fiberwise bounded orbits for a linear cocycle over an SFT with positive entropy imposes strict rigidity constraints on the cocycle. More precisely, it forces the cocycle to be isometric, meaning it must preserve a continuous family of inner products on the fibers.

We say the $\gldr$ cocycle $(f,A)$ over a homeomorphism $f:X\to X$ is continuously conjugated to an isometric cocycle if there exists a continuous map $P:X \to \gldr$ such that for every $x\in X$, 
\begin{equation}\label{eq:cocycle-conj}
    P(f(x))^{-1} A(x) P(x) \in \mathrm{O}(d). 
\end{equation}
Recall that  $\mathrm{O}(d)$ is the group of $d\times d$ real orthogonal matrices. Equivalently, this means there exists a continuous family of inner products $\langle \cdot, \cdot \rangle_x$ on $\R^d$ preserved by the cocycle, satisfying $\langle A(x)u, A(x)v \rangle_{f(x)} = \langle u, v \rangle_x$ for all $x \in X$ and $u,v \in \R^d$.

Let $\Sigma_H$ be a non-trivial transitive subshift of finite type and $w \in \cA_H^{\mathrm{fin}}$. We denote $\Sigma_H(w) := \Sigma_H \setminus \bigcup_{k \in \Z} \sigma^{k}(\cylinder [w])$.

\begin{theorem}[{\cite[Theorem 3]{Lind-Perturbations-SFT}}]\label{Lem:entropy-gap}
    Let $\Sigma_H$ be a transitive SFT of positive entropy with Perron eigenvalue $\rho(H)>1$. There exist constants $c_H,d_H>0$ and $N_H$ such that for every $w \in \cA_H^{n}$ with $n \geq N_H$,
    \[
c_H\,\rho(H)^{-{n}}
\;\le\;
h_{\mathrm{top}}(\sigma\big|_{\Sigma_H}) - h_{\mathrm{top}}\bigl(\sigma\big|_{\Sigma_H(w)}\bigr)
\;\le\;
d_H\,\rho(H)^{-{n}}.\]
\end{theorem}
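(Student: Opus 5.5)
The plan is to reduce the statement to a Perron eigenvalue perturbation problem, following Lind's argument. Throughout, $\rho := \rho(H)$.

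\emph{Step 1: recoding.} Pass to the $n$-block presentation of $\Sigma_H$. This is the graph from Section~\ref{sec:prelim} with vertex set $\cA_H^{\times n}$, an edge from $\ua$ to $\ub$ whenever they overlap in $n-1$ symbols, and adjacency matrix $M_n$. By Lemma~\ref{lem:connected}, $M_n$ is irreducible, and its Perron eigenvalue is again $\rho$. Forbidding $w \in \cA_H^{\times n}$ is exactly deleting the vertex $w$. So $h_{\mathrm{top}}(\sigma|_{\Sigma_H(w)}) = \log \rho_w$, where $\rho_w$ is the spectral radius of the principal submatrix $M_n'$ obtained by removing row and column $w$. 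Since $\log\rho - \log\rho_w \asymp (\rho-\rho_w)/\rho$ once $\rho_w$ is close to $\rho$, it suffices to show $\rho - \rho_w \asymp \rho^{-n}$.

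\emph{Step 2: the characteristic equation.} I would use the first-return generating function at $w$. Let $f_k$ be the number of closed walks of length $k$ in the $n$-block graph that start and end at $w$ and do not pass through $w$ in between, and let $F(z) = \sum_k f_k z^k$. Let $g_k(z)$ count walks avoiding $w$; these grow at rate $\rho_w$. Then $F$ is analytic on $|z| < 1/\rho_w$, and by irreducibility $\rho$ is characterised by $F(1/\rho) = 1$. Since $F$ is increasing on $(0, 1/\rho_w)$, the gap $1/\rho_w - 1/\rho$ is controlled by two quantities: the derivative $F'(1/\rho)$, which by Kac's formula equals $\rho \cdot(\text{mean return time to } w \text{ under the Parry measure } \mu)$, i.e.\ $\rho/\mu([w])$; and the blow-up of $F$ as $z \nearrow 1/\rho_w$.

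\emph{Step 3: Parry mass.} Let $u, v$ be the left and right Perron vectors of $H$. The Parry measure gives
\[
\mu([w]) = \frac{u_{w_1} v_{w_n}}{\langle u,v\rangle}\,\rho^{-(n-1)},
\]
so $\mu([w]) \asymp \rho^{-n}$ with constants depending only on $H$. Heuristically, $\rho - \rho_w \approx \rho\,\mu([w])$, which is the claimed two-sided estimate.

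\emph{Step 4: making it rigorous (main obstacle).} The difficulty is controlling the second-order terms uniformly in $w$. The dangerous case is a self-overlapping $w$, for example a word close to a periodic word, which has very short returns, so $f_k$ is not negligible for small $k$. My plan is to split $F(z) = F_{\le n}(z) + F_{>n}(z)$.

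For the short part $F_{\le n}$, I would use Guibas--Odlyzko-type correlation polynomials. The self-overlaps of $w$ contribute a correlation term that stays bounded between positive constants at $z = 1/\rho$, independently of $w$, once $n \ge N_H$.

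For the long part $F_{>n}$, I would show the tail behaves like a geometric series whose ratio is $\rho_w/\rho \le 1$. The needed upper bound on walks avoiding $w$ comes from the crude estimate $\rho_w \ge \rho - O(\rho^{-n/2})$ for $n$ large; I would obtain this by exhibiting a subshift avoiding $w$ built from a fixed long block whose Parry mass decays.

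Combining the two parts gives
\[
1 - F(1/\rho_w) \asymp \rho^{n}(\rho - \rho_w)^{-1}\cdot\rho^{-2n}\cdots,
\]
that is, $\rho - \rho_w \in [c\rho^{-n}, d\rho^{-n}]$. The constants $c_H, d_H$ then absorb the Perron-vector ratios, the transition length $\ell$ from Lemma~\ref{lem:connected}, and the bounds on the correlation polynomial.
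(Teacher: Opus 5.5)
The paper gives no proof of this statement; it quotes it from Lind. So your argument has to stand on its own, and as written it has a genuine gap. Steps 1--3 are fine: the $n$-block recoding, $F(1/\rho)=1$, Kac's formula $F'(1/\rho)=\rho/\mu([w])$, and $\mu([w])=\frac{u_{w_1}v_{w_n}}{\langle u,v\rangle}\rho^{-(n-1)}$ are all correct. But they only give the heuristic, and Step 4, where the theorem actually lives, proves neither inequality.

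\begin{itemize}
\item \emph{$F(1/\rho)$ and $F'(1/\rho)$ do not locate the singularity $1/\rho_w$ of $F$.} Convexity only gives $F(z)\ge 1+F'(1/\rho)(z-1/\rho)$ on $(1/\rho,1/\rho_w)$, which is compatible with the blow-up happening anywhere.
\item \emph{Your closing display is not meaningful.} We have $F(z)=zM_n(w,w)+z^2r^{\top}(I-zM_n')^{-1}c$. Every vertex of the punctured graph is reached from $w$, and reaches $w$, without passing through $w$. Hence $F$ has a genuine pole at $1/\rho_w$ and $F(1/\rho_w)=+\infty$.
\item \emph{The long-return step runs in the wrong direction.} Bounding the tail of $F$ from above needs an upper bound on walks avoiding $w$. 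The estimate $\rho_w\ge\rho-O(\rho^{-n/2})$ is a lower bound on $\rho_w$, and exhibiting a subshift inside $\Sigma_H(w)$ can only ever give lower bounds. Saying the tail is geometric with ratio $\rho_w/\rho$ carries no information about $1-\rho_w/\rho$, which is the unknown.
\item \emph{The correlation polynomial does no work.} You assert it is bounded at $1/\rho$, but it never enters any equation satisfied by $\rho_w$, so its boundedness yields nothing.
\end{itemize}

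The missing idea is an exact identity, which makes the short/long splitting of $F$ unnecessary. A walk of length $k$ from $w$ to $w$ in the $n$-block graph is an admissible word of length $n+k$ with prefix and suffix $w$. For $k<n$ there is exactly one such word or none, according to whether $w$ overlaps itself with shift $k$. For $k\ge n$ there are $(H^{k-n+1})_{w_nw_1}$ of them. Combined with the renewal identity, this gives, first for $|z|<1/\rho$ and then as rational functions,
\[
\frac{1}{1-F(z)}=\sum_{k\ge 0}(M_n^k)_{ww}\,z^k=c_w(z)+z^{n-1}Q(z),\qquad Q(z):=\sum_{j\ge 1}(H^j)_{w_nw_1}\,z^j .
\]
Here $c_w(z)=\sum_p z^p$, summed over the self-overlap shifts $0\le p<n$ of $w$; this is the Guibas--Odlyzko autocorrelation polynomial, now for an SFT.

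On $(1/\rho,1/\rho_w)$ the left side increases from $-\infty$ to $0$. So $1/\rho_w$ is the first zero to the right of $1/\rho$ of
\[
h(z):=(1-\rho z)\bigl(c_w(z)+z^{n-1}Q(z)\bigr).
\]
Since $(1-\rho z)(I-zH)^{-1}\to vu^{\top}/\langle u,v\rangle$ as $z\to 1/\rho$, we get $(1-\rho z)Q(z)=\kappa+O(z-1/\rho)$ with $\kappa=v_{w_n}u_{w_1}/\langle u,v\rangle>0$. This is uniform in $w$ because only finitely many pairs $(w_n,w_1)$ occur. Also $1\le c_w(z)\le 2\rho/(\rho-1)$ whenever $0\le \rho z-1\le K\rho^{-n}$ and $n$ is large.

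Write $\rho z=1+\epsilon$. Then $h=-\epsilon\,c_w(z)+\rho^{-(n-1)}(1+\epsilon)^{n-1}\bigl(\kappa+O(\epsilon)\bigr)$. This is positive for $0\le\epsilon<c\rho^{-n}$ and negative at $\epsilon=K\rho^{-n}$ for $K$ large. Hence $\log(\rho/\rho_w)\asymp\rho^{-n}$, uniformly in $w$.

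More precisely, since $\rho^{-(n-1)}\kappa=\mu([w])$, you get $\rho-\rho_w\approx\rho\,\mu([w])/c_w(1/\rho)$. Your Kac heuristic is therefore right only up to the correlation factor $c_w(1/\rho)\in[1,\rho/(\rho-1)]$. It is this exact identity that supplies the uniformity over self-overlapping $w$ that you were after.
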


\begin{proof}[Proof of Theorem \ref{thm:upperbound}]
First, note that for every transitive subshift of finite type $\sigma:\Sigma_H\to \Sigma_H$, there exists $k\in \N$ and a partition of $\Sigma_H$ into $k$ disjoint closed subsets $\Sigma_1,\ldots,\Sigma_k$ such that for every $i$, $\sigma(\Sigma_i)=\Sigma_{i+1}$ (taking indices mod $k$), and for every $i$, $\sigma^k\big|_{\Sigma_i}$ is a mixing SFT. 

Let $x\in \Sigma_H$ be a point with a dense forward orbit. If $\sup_{n\in \N} \|A_n(x)^{\pm 1}\|<+\infty$, then by Lemma \ref{lem:b-properties} the set $\{A_n(y):y\in \Sigma_H, n\in \Z\}$ has compact closure in $\gldr$. Consequently, by \cite[Theorem 1.3]{KalininSadovskaya2018}, the cocycle $(\sigma^k|_{\Sigma_1},A_k)$ preserves a continuous family of norms on fibers, which implies that the cocycle $(\sigma,A)$ preserves a continuous family of norms. 

Therefore, since the cocycle $(\sigma,A)$ satisfies the assumptions of the theorem but is not continuously conjugate to an isometric cocycle, it follows that for our chosen point $x\in\Sigma_H$ with a dense forward orbit, either the sequence $\{\|A_n(x)\|\}_{n\in \N}$ or $ \{\|A_n(x)^{-1}\|)\}_{n\in \N}$ must be unbounded. Choose $N\in\N$ such that \[\max(\|A_N(x)\|, \|A_N(x)^{-1}\|)>\kappa^2.\] Since for a fixed $N$ the maps $(y, \tilde{A}) \mapsto \|\tilde{A}_N(y)\|$ and $(y, \tilde{A}) \mapsto \|\tilde{A}_N(y)^{-1}\|$ are continuous with respect to both the point $y\in \Sigma_H$ and the cocycle in the $\cC^0$ topology, there exists an open neighborhood $U$ of $x$ and a $\cC^0$ neighborhood $\cU$ of $A$ such that for every $y\in U$ and every $\tilde{A}\in \cU$, $\max(\|\tilde{A}_N(y)\|, \|\tilde{A}_N(y)^{-1}\|)>\kappa^2$. 

Since cylinder sets form a basis for the topology of $\Sigma_H$, we may choose a word $w$ of sufficiently large length $L$ for which the cylinder $\cylinder [w]$ is contained in $U$.

Hence according to \eqref{eq:inclusion-B}, the entire cylinder $\cylinder [w]$ is disjoint from ${\bfB}(\tilde{A},\kappa)$ for every $\tilde{A}\in \cU$, and so we have ${\bfB}(\tilde{A},\kappa)\subset \Sigma_H(w)$ for every $\tilde{A} \in \cU$. Therefore, by Lemma~\ref{Lem:entropy-gap}, for some constant $c_H>0$,
$$
h_{\mathrm{top}} (\sigma\big|_{{\bfB}(\tilde{A},\kappa)})\leq h_{\mathrm{top}}\bigl(\sigma\big|_{\Sigma_H(w)}\bigr)
\;\le\;
h_{\mathrm{top}}(\sigma\big|_{\Sigma_H}) -c_H\,\rho(H)^{-L}.
$$
Consequently, for all $\tilde{A}$ sufficiently close to $A$ in the $\cC^0$ topology, the entropy of $\sigma\big|_{{\bfB}(\tilde{A},\kappa)}$ has a uniform gap from $h_{\mathrm{top}}(\sigma\big|_{\Sigma_H})$.
\end{proof}

We conclude the section by an example highlighting the necessity of the H\"older regularity in Theorem \ref{thm:upperbound}. Indeed, the rigidity provided by \emph{this theorem does not hold for $\mathcal{C}^0$ cocycles.}

\medskip
\begin{example}\label{rmk:nonrigid-full-entropy}
Let $\sigma:\Sigma_H\to \Sigma_H$ be a subshift of finite type with positive topological entropy. Using strong connectivity of the associated graph (Lemma \ref{lem:connected}), one can find two distinct finite words $\uv, \uw \in \mathcal{A}_H^{\mathrm{fin}}$ such that any concatenation of them is admissible. Denote the length of $\uv$ by $\ell$ and consider a sequence $\{a_n\}_{n\in \mathbb{N}}$ of real numbers which is bounded (say, $|a_n| \leq 1$), non-convergent, but such that $a_{n+1}-a_n$ converges to zero. Define the function $\phi: \Sigma_H \to \mathbb{R}$ as follows. For every $k \in \mathbb{N}$, if $x \in \cylinder[0;{\uv^k\uw}]\setminus \cylinder[0;{\uv^{k+1}\uw}]$, define $\phi(x) = a_k$ and otherwise, set $\phi(x) = 0$. Next, define the cocycle $A: \Sigma_H \to \mathrm{SL}(2, \mathbb{R})$ by 
$$A(x) = \begin{pmatrix} 1 & \phi(\sigma^\ell x) - \phi(x) \\ 0 & 1 \end{pmatrix}.$$\\
The map $A$ is continuous because it is locally constant for all points except those in the local stable set of the periodic point $p=(\ldots,\uv;\uv,\uv,\uv,\ldots)$.
Moreover, since $\lim_{n\to\infty} (a_{n+1} - a_n) = 0$, we deduce that the cocycle is continuous at points of $W^s_{loc}(p)$ as well. 

On the other hand, for any $k \in \mathbb{N}$ and $x \in \Sigma_H$, the cocycle over $k\ell$ iterates expands as a partially telescoping product:
\begin{equation}\label{eq:A_lk}
    A_{k\ell}(x) = A(\sigma^{k\ell-1} x) \cdots A(x) =  \begin{pmatrix} 1 & \sum_{j=0}^{\ell-1} \left( \phi(\sigma^{k\ell+j} x) - \phi(\sigma^j x) \right) \\ 0 & 1 \end{pmatrix}.
\end{equation}
Since $\phi$ is bounded by $\sup_n |a_n| \leq 1$, the absolute value of the upper-right entry is uniformly bounded by $2\ell$ for all $k$ and $x$. Extending this to $n = k\ell + r$ adds at most $\ell$ bounded terms, so the cocycle over $\sigma$ has uniformly bounded products everywhere.

However, the cocycle is not conjugate to an isometric cocycle. Suppose, for a contradiction, that there exists a continuous map $P:\Sigma_H\to \GL(2,\mathbb{R})$ such that
$$O(x)=P(\sigma x)A(x)P(x)^{-1}\in \mathrm{O}(2) \qquad \text{for every }x\in \Sigma_H.$$ 
Now, for each $k\in \mathbb{N}$ define 
$$x^{(k)}=(\ldots,\uv;\underbrace{\uv\ldots\uv}_k,\uw,\uw,\ldots).$$
Clearly, $x^{(k)}\to p$. Let us evaluate the cocycle \eqref{eq:A_lk} at $x^{(k)}$. For $1 \leq j \leq \ell-1$, the shifted sequence $\sigma^j x^{(k)}$ does not begin with the word $\uv$, and therefore $\phi(\sigma^j x^{(k)}) = 0$. Similarly, since $\sigma^{k\ell} x^{(k)}$ begins with the word $\uw$, we have $\phi(\sigma^{k\ell+j} x^{(k)}) = 0$ for all $0 \leq j \leq \ell-1$. The only non-vanishing term in the sum is at $j=0$, yielding $\phi(x^{(k)}) = a_k$. Thus, 
\[A_{k\ell}(x^{(k)})= \begin{pmatrix} 1 & -a_k\\ 0 & 1 \end{pmatrix}.\]
Now choose two subsequences $k_j,k'_j\to\infty$ such that
$a_{k_j}\to \beta $ and $ a_{k'_j}\to \beta'$
with $\beta\neq \beta'$. This is possible since $\{a_k\}$ is bounded and non-convergent. Since $x^{(k)}\to p$, by continuity of $P$ we obtain
$$P(x^{(k_j)})\to P(p), \qquad P(x^{(k'_j)})\to P(p).$$
Let $q = (\ldots,\uv;\uw,\uw,\uw,\ldots)$ and note that $\sigma^{k\ell}( x^{(k)}) = q$ for all $k \in \mathbb{N}$. Therefore,
$$O_{k_j\ell}(x^{(k_j)})= P(q) \begin{pmatrix} 1 & -a_{k_j}\\ 0 & 1 \end{pmatrix} P(x^{(k_j)})^{-1} \longrightarrow S:= P(q) \begin{pmatrix} 1 & -\beta\\ 0 & 1 \end{pmatrix} P(p)^{-1},$$
and similarly
$$O_{k'_j\ell}(x^{(k'_j)}) \longrightarrow S':= P(q) \begin{pmatrix} 1 & -\beta'\\ 0 & 1 \end{pmatrix} P(p)^{-1}.$$
Since  $O_n(\cdot)$ takes values in $\mathrm{O}(2)$, and $\mathrm{O}(2)$ is compact, both $S,S'$ belong to $\mathrm{O}(2)$. Hence the product $S^{-1}S'$ also belongs to $\mathrm{O}(2)$:
\[
S^{-1}S'
=
P(p)
\begin{pmatrix}
1 & \beta-\beta'\\
0 & 1
\end{pmatrix}
P(p)^{-1}
\in \mathrm{O}(2).
\]
This is impossible because $S^{-1}S'$ is conjugate to a matrix with trace $2$ and therefore it must have trace $2$ itself. The only element in $\mathrm{O}(2)$ with trace $2$ is the identity matrix $\id$. This forces $S=S'$, which implies $\beta = \beta'$, contradicting the assumption that $\beta \neq \beta'$. \hfill \qedsymbol
\end{example}

\section{Proof of Theorem \ref{thm:A}}
\label{sec:proof-main-thm}
This section is devoted to the proof of Theorem \ref{thm:A} using the tools established in previous sections. The proof uses the fact that any continuous cocycle over an SFT with positive topological entropy without any dominated splitting admits arbitrarily small $\cC^0$-perturbations satisfying a covering condition which, due to the multiple covering principle, ensures multiple covering for some suitably chosen sets. The core idea of the possibility of such perturbations can be seen in the case of cocycles over full shifts presented in \cite[Section 7]{NRR1}. However, modifying the proof to accommodate general SFTs required developing additional notation and was omitted there. For the sake of completeness, we provide a rigorous proof of this fact here.

\begin{lemma}\label{lem:distinct-transitions}
Let $\sigma \colon \Sigma_H \to \Sigma_H$ be a transitive subshift of finite type with positive topological entropy. For any two admissible words $\underline{p}, \underline{p}'$ and any integers $t, L \ge 1$, there exists an integer $L' > L$ such that the set $T(\underline{p},\underline{p}')$ contains at least $t$ distinct words of length $L'$.
\end{lemma}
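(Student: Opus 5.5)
We will build the required transitions by routing through a vertex where the graph branches, then padding every branch out to one common length. Fix $N:=\max\{|\underline{p}|,|\underline{p}'|\}$ and work in the directed graph associated with $\sigma$ whose vertex set is $\cA_H^{\times N}$. By Lemma~\ref{lem:connected}, this graph is strongly connected and has a vertex $\underline{d}_0$ of out-degree at least two, with edges to distinct vertices $\underline{d}_1\neq\underline{d}_2$.

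Since the graph is strongly connected, there are closed walks from $\underline{d}_0$ to itself starting with the edge to $\underline{d}_1$, and others starting with the edge to $\underline{d}_2$. Call their lengths $\ell_1,\ell_2$. We concatenate blocks of these two kinds so that each block has total length $\ell_1\ell_2$. The first kind consists of $\ell_2$ copies of the $\underline{d}_1$-loop; the second consists of $\ell_1$ copies of the $\underline{d}_2$-loop. The two kinds are distinct walks because their first edges differ. Then $r$ consecutive blocks yield $2^r$ distinct closed walks at $\underline{d}_0$, all of the same length $r\ell_1\ell_2$. Walks in this graph correspond bijectively to admissible words, with the vertex sequence determining the word. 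Distinct walks of equal length therefore give distinct words of equal length.

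Next, we choose a fixed walk from a vertex whose prefix is $\underline{p}$ to $\underline{d}_0$, and a fixed walk from $\underline{d}_0$ to a vertex whose suffix is $\underline{p}'$. Both exist by strong connectivity and by Lemma~\ref{lem:connected}, applied to extend $\underline{p}$ and $\underline{p}'$ to length-$N$ words. Concretely, take any element of $T(\underline{p},\underline{d}_0)$ and any element of $T(\underline{d}_0,\underline{p}')$. Inserting each of the $2^r$ loops at $\underline{d}_0$ gives $2^r$ walks, whose words lie in $T(\underline{p},\underline{p}')$ via amalgamated concatenation $\vee_N$, and which all have the same length $L'=c+r\ell_1\ell_2$, where $c$ is a constant. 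These words remain distinct because the two fixed pieces are common and the middle portions differ.

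Finally, we choose $r$ with $2^r\ge t$ and $c+r\ell_1\ell_2>L$, which completes the argument. The main point requiring care is to ensure that the words are genuinely distinct and have equal length. The equal-length blocks handle the length issue. Distinctness follows from the first edge out of $\underline{d}_0$ within each block, where the $N$-symbol vertices $\underline{d}_1\ne\underline{d}_2$ differ in their last symbol, so the words differ at that position.
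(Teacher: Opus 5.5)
Your proof is correct and takes essentially the same route as the paper. You route through a branching vertex, pad the two loops to the common length $\ell_1\ell_2$, and sandwich the $2^r$ resulting block sequences between fixed transitions out of $\underline{p}$ and into $\underline{p}'$. The only difference is cosmetic: the paper works in the letter graph on $\cA$ with $\vee_1$, whereas you use the higher-block graph on $\cA_H^{\times N}$ with $\vee_N$, which is unnecessary but harmless.
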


\begin{proof}
Consider the directed graph associated with the SFT with vertex set $\cA$. By Lemma \ref{lem:connected}, this graph is strongly connected and has at least one vertex with an out-degree of at least two.

Let $q \in \cA$ be such a vertex, and let $q_1, q_2 \in \cA$ be distinct elements corresponding to two distinct edges originating from $q$. Since the graph is strongly connected, we can find two distinct cycles starting and ending at $q$. For $i=1,2$, let $\underline{c}_i \in T(q,q)$ be the word for such a cycle, chosen so that its second letter is $q_i$. Let the length of $\underline{c}_i$ be $\ell_i+1$. We then define two new words by repetition:
\[\underline{\gamma}_1=\underbrace{\underline{c}_1\vee_1\cdots \vee_1 \underline{c}_1}_{\ell_2 ~\mathrm{times}}, \quad \underline{\gamma}_2=\underbrace{\underline{c}_2\vee_1\cdots \vee_1 \underline{c}_2}_{\ell_1 ~\mathrm{times}}.\]
The resulting words $\underline{\gamma}_1$ and $\underline{\gamma}_2$ both belong to $T(q,q)$ and have the same length $\ell_1\ell_2+1$. They are distinct since their second letters are $q_1$ and $q_2$, respectively.

Again by Lemma \ref{lem:connected}, strong connectivity ensures that for the given admissible words $\underline{p}$ and $\underline{p}'$, there exist transition words ${\uw} \in T(\underline{p}, q)$ and ${\uw'} \in T(q, \underline{p}')$.

Finally, for the given integers $t, L \geq 1$, we must construct at least $t$ words of a length $L' > L$. Now, choose an integer $r$ large enough to satisfy  $2^r \geq t$ and 
     $|\uw| + r\ell_1\ell_2 + |\uw'|-1 > L$. 
 We then form $2^r$ distinct words in $T(\underline{p},\underline{p}')$ for any sequence $(i_1, \dots, i_r) \in \{1, 2\}^r$ of the form:
\[ \uw\vee_1 \underline{\gamma}_{i_1} \vee_1 \underline{\gamma}_{i_2} \vee_1 \cdots \vee_1 \underline{\gamma}_{i_r}\vee_1 \uw'.\]
All $2^r$ of these words have the same length, $L'>L$. This completes the proof.
\end{proof}

\begin{proposition}\label{prop:density-covering}

Let $d\in \N$ and $G$ be a closed subgroup of $\gldr$. 
Let $(\sigma, A)$ be a locally constant $G$ cocycle over the transitive subshift of finite type $\sigma \colon \Sigma_H \to \Sigma_H$ with $h_{\mathrm{top}}(\sigma\big|_{\Sigma_H})>0$.
If $A_{n}(p) = \id$ for some periodic point $p$ of period $n$, then for every $\varepsilon > 0$, there exists a locally constant cocycle $(\sigma, \tilde{A})$ with $d_{\cC^0}(A, \tilde{A}) < \varepsilon$ that satisfies the covering condition \eqref{eq:gen-covering}.
\end{proposition}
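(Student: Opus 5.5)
We will perturb $A$ only along many long transition loops based at the periodic orbit of $p$, so that the perturbed return maps form a finite set of elements close to $\id$ that covers a fixed precompact neighbourhood of the identity. Concretely, write $p$ as the periodic point of $\uv$ with $|\uv|=n$, and pass to a higher block presentation. We may assume $A$ depends on positions in $\ldbrack -k,k\rdbrack$ with $n \geq 2k+1$, replacing $\uv$ by a power if necessary. Let $\ua\in\cA_H^{\times(2k+1)}$ be the central block of $p$ and take $\cA'=\{\ua\}$.

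By Lemma~\ref{lem:distinct-transitions}, applied to $\underline{p}=\underline{p}'=\uv^{N}$ for large $N$, we obtain, for any prescribed $t$, words $\uw_1,\dots,\uw_t\in T(\uv^N,\uv^N)$ of a common length $L'$. These are pairwise distinct and each is a transition from $\ua$ to $\ua$. Since they are distinct of equal length, there are positions where they differ, and we can pick cylinders $C_s=\cylinder[-k;\uw_s]$ that are mutually disjoint. We also want $C_s$ disjoint from the periodic orbit of $p$, which holds if $N$ is large and each $\uw_s$ leaves the $\uv$-pattern in its middle.

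The perturbation will be $\tilde A = A\cdot h_s$ on a single small sub-cylinder along $\uw_s$, namely the block located at the first letter position inside $\uw_s$ distinct from the $\uv$-periodic pattern. Here $h_s\in G\cap\cB_{\varepsilon'}$, with $\varepsilon'$ small relative to $\varepsilon$ and $\|A^{\pm1}\|$. Away from these sub-cylinders we set $\tilde A=A$. Then $\tilde A_{[\uw_s\rangle}=g_s\, h_s'\, g_s'$, where $g_s,g_s'$ are the unperturbed partial products.

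The problem is that the unperturbed product $A_{[\uw_s\rangle}$ need not be near $\id$. To fix this, we first correct it: append to each $\uw_s$ a return segment and perturb there as well, so that the total product becomes $\id$. This is impossible with small perturbations in general, so we use a different idea. We use the transitions $\uv^{N}\uw_s\uv^{N}$ only to build the semigroup, and rely on the fact that the Proposition's hypothesis $A_n(p)=\id$ makes products along $\uv^j$ trivial.

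The cleaner route is as follows. Choose a neighbourhood $\cU=\cB_r$ of $\id$ with $r$ small, and aim for the covering $\overline{\cU}\subset\bigcup_s (\tilde A_{[\uu_s\rangle})^{-1}\cU$, where $\uu_s$ are transitions from $\ua$ to $\ua$. We want the elements $\tilde A_{[\uu_s\rangle}$ to be a finite $\varepsilon'$-dense family $\{h_1,\dots,h_t\}$ in a small ball around $\id$ in $G$. Such a family satisfies the covering condition for $\cB_r$ with $r$ comparable to $\varepsilon'$, by a compactness argument on $\overline{\cB_r}$. The inclusion $\overline{\cB_r}\subset\bigcup_s h_s^{-1}\cB_r$ holds whenever $\{h_s^{-1}\}$ is $\tfrac r2$-dense in $\cB_{2r}$ and $h_s\in \cB_{r}$.

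To make the products equal to $h_s$, we use the disjoint loops and perturb so that the whole loop product becomes $h_s$. The loop product is $M_s=A_{[\uu_s\rangle}$. We take $\uu_s=\uv^{N}\vee\uw_s'\vee\uw_s''\vee\uv^{N}$, where $\uw_s'$ leaves the orbit and $\uw_s''$ returns along a path forming with $\uw_s'$ a "go and come back". A small $\cC^0$ perturbation cannot cancel an arbitrary $M_s$.

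This is the main obstacle, and this is where I expect the argument to need the $\cC^0$ freedom of a non-locally-constant-in-$k$ modification. The locally constant cocycle may depend on a larger window $\ldbrack -K,K\rdbrack$, with $K\gg k$. We then define $\tilde A$ near the long word $\uv^{N}$ by spreading a correction $M_s^{-1}h_s$ as a product of $N$ factors, each $\varepsilon$-close to $\id$. This is possible since $G$ is connected near $\id$ along a path, after first replacing $G$-valued corrections by elements of the identity component. Here we use $A_{jn}(p)=\id$: along the $N$ copies of $\uv$ preceding the return to $\ua$, the unperturbed product is trivial, so the correction alone determines the product. Since the excursion $\uw_s$ is distinct for each $s$, the windows identifying "the $N$ copies of $\uv$ right after excursion $s$" are disjoint, and the total perturbation is $\cC^0$-small, locally constant, and realizes $\tilde A_{[\uu_s\rangle}=h_s$. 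The non-identity-component issue, namely $M_s$ outside $G^0$, is handled by first composing with a fixed loop whose product lies in the right component. Finitely many components meet the compact set in question, and we choose $\uw_s$ among the pairs we can concatenate.

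With $\cA'=\{\ua\}$, $\cU=\cB_r$, and the finite family $\{\tilde A_{[\uu_s\rangle}\}$, condition \eqref{eq:gen-covering} then holds.
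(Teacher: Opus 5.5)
Your final construction is essentially the paper's. You fix finitely many distinct excursions based at the block of $p$, and use $A_n(p)=\id$ to see that the unperturbed product along $\uv^N$ is trivial. You then write the large correction relating $M_s$ to the target $h_s$ as a product of $N$ elements close to $\id$, inserted along the $N$ copies of $\uv$ adjacent to the $s$-th excursion. The window is long enough to see the excursion, so the perturbations for different $s$ sit on disjoint cylinders.

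The paper does the same on the cylinders $\cylinder[-k;\underline{\tilde p}^j\underline{\tilde w}_i\underline{\tilde p}^{N-j}]$, that is, at the copies preceding the excursion rather than following it. It also uses an arbitrary $\cU$ and a finite $\cG$ in place of your net $\{h_s\}$ near $\id$. These differences are cosmetic.

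When you write it up:
\begin{itemize}
\item drop the abandoned attempts;
\item fix the excursions before choosing $N$, since $N$ must depend on $\|M_s^{\pm1}\|$;
\item as the paper does, ask Lemma~\ref{lem:distinct-transitions} for $n+t$ words, so that $t$ of the excursions genuinely differ from the $p$-block.
\end{itemize}

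The step that does not hold is your treatment of $M_s\notin G^0$. Since $G^0$ is open in $G$, a $\cC^0$-small $G$-valued perturbation cannot change the coset in $G/G^0$ of any product. With $h_s\in\cB_r\subset G^0$ you therefore need $M_s\in G^0$, and nothing guarantees a loop whose product lies in a prescribed component.

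For example, take $G=\{\mathrm{diag}(e^{u},2^{m}): u\in\R,\ m\in\Z\}\subset\mathrm{GL}(2,\R)$ over the full shift on $\{1,2\}$, with $A=\id$ on $\cylinder[0;1]$, $A=\mathrm{diag}(1,2)$ on $\cylinder[0;2]$, and $p=1^\infty$. Every loop leaving the orbit of $p$ has product with $m\geq 1$, and concatenating further loops only increases $m$. So your ``composing with a fixed loop whose product lies in the right component'' cannot be carried out. In fact one checks that no small $G$-valued perturbation of this cocycle satisfies \eqref{eq:gen-covering} at all.

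So the argument needs the correction to lie in $G^0$, for instance $G$ connected. This applies to yours and to the paper's alike, since the paper uses the same realization step without comment. It suffices for Theorem~\ref{thm:A'}, where $G=\sldr$.
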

\begin{proof}
This proof is an adaptation of the arguments in Propositions 7.3 and 7.6 of \cite{NRR1}. Assume that the cocycle $(\sigma,A)$ depends on positions in $\ldbrack -k,+k\rdbrack$. 

Let $\cU$ be an arbitrary precompact, non-empty, open subset of $G$ containing the identity element. Since $\overline{\cU}\subset \overline{\cU}\cU$ (see Remark \ref{rmk:covering-improvement}), the family of open sets $\{u\cU\}_{u\in \overline{\cU}}$ forms an open cover of $\overline{\cU}$. By compactness, there is a finite subcovering. Therefore, we may find a finite subset $\cG=\{g_1, \dots, g_t\}$ of $G$ such that $\overline{\cU}\subset \cG^{-1}\cU$. 
Moreover, because $\sigma\colon \Sigma_H\to \Sigma_H$ is transitive and has positive topological entropy, Lemma \ref{lem:distinct-transitions} (with $\underline{p}=\underline{p}'$) guarantees the existence of $t$ distinct words $\underline{w}_1, \dots, \underline{w}_t$ of the same length such that the words $\underline{p} \uw_i$ and $\uw_i\underline{p}$ for $i\in \ldbrack 1, t\rdbrack$ are all admissible. 
   
   Now, let $\ell_1,\ell_2$ be positive integers such that the admissible words $\underline{\tilde{p}}:=\underline{p}^{\ell_1}$  and $\tilde{\uw}_i:=(\underline{w}_i \underline{p})^{\ell_2}$ have the same length, denoted by $\ell$. We need also ensure that these words are distinct for the rest of the proof. Note that the words $\tilde{\uw}_i$ are distinct from each other because the $\uw_i$ are. If $\tilde{\uw}_i=\underline{\tilde{p}}$ for some $i$, then $\underline{w}_i$ must be a subword of $\underline{p}^{\ell_1}$, and denoting the period of $p$ by $n$, there are at most $n$ such distinct subwords. Thus, if we initially apply Lemma \ref{lem:distinct-transitions} to find $n+t$ words, we guarantee that at least $t$ of the resulting words $\tilde{\underline{w}}_i$ are distinct from $\tilde{\underline{p}}$.
  
  Now define the words $\underline{v}_i := \underline{\tilde{p}}^N\underline{\tilde{w}}_i\underline{\tilde{p}}^N$, which are all admissible, where $N$ is a large positive integer that will be specified later. From the hypothesis $A_{n}(p)=\id$, it follows that the cocycle product over the word $\underline{\tilde{p}}=\underline{p}^{\ell_1}$ is also the identity. Consequently, for every $z\in \cylinder[-k;\underline{v}_i]$, the cocycle products over the initial and final blocks of $\underline{\tilde{p}}$ are identity matrices:
   $$A_{\ell}(z)=A_{\ell}(\sigma^\ell (z))=\cdots =A_{\ell}({\sigma^{(N-1)\ell}}(z))=\id.$$
   
   Now, provided that $N$ is chosen to be sufficiently large depending on $\varepsilon$ and the norms of $g_i$'s, we can proceed similarly to the proof of Propositions 7.3 and 7.6 in \cite{NRR1}. Indeed, we can perturb the cocycle $A$ on the cylinders of the form 
   \[\cylinder[-k;\underline{\tilde{p}}^j \underline{\tilde{w}}_i \underline{\tilde{p}}^{N-j}], \quad \text{for}~i\in \ldbrack 1, t\rdbrack, ~ j\in \ldbrack 0, N-1\rdbrack.\]
   Note that these cylinders are mutually disjoint, as the words $\tilde{w}_i$ and $\tilde{p}$ are distinct and have the same length. We can therefore perturb the cocycle on these cylinders (in the $\cC^0$ topology) to obtain $A_{[\underline{v}_i; {(N+1)\ell}\rangle}=g_i$. Since $\underline{v}_i \in T(\underline{\tilde{p}},\underline{\tilde{p}})$, this gives the covering condition. 
\end{proof}

Now, we have all the required ingredients for the proof of Theorem \ref{thm:A}. 
\begin{theorem}[Theorem A]\label{thm:A'}
Let $d\in \N$ and $\alpha>0$. Suppose $\sigma:\Sigma_H\to \Sigma_H$ is a transitive subshift of finite type with positive topological entropy, and let $(\sigma,A)$ be a continuous $\sldr$ cocycle. If $(\sigma, A)$ admits no dominated splitting, then for every $\varepsilon>0$, there exists a $\cC^\alpha$-open set $\cB$ of $\sldr$ cocycles over $\sigma$ and positive constants $\kappa,\gamma,\gamma'$ with $\gamma\leq \gamma'<h_{\rm top}(\sigma\big|_{\Sigma_H})$ such that every $(\sigma,\tilde{A}) \in \cB$ satisfies $d_{\cC^0}(A,\tilde{A})<\varepsilon$ and
\[\gamma \leq h_{\mathrm{top}}(\sigma\big|_{{\bfB}(\tilde{A},\kappa)})\leq \gamma'.\]
\end{theorem}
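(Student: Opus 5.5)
The proof will chain the tools of the preceding sections, in four steps.

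\emph{Step 1: reduction to a locally constant cocycle with an identity periodic product.} Since $(\sigma,A)$ has no dominated splitting, I will invoke the $\cC^0$-perturbation machinery from \cite[Section 7]{NRR1}, adapted to SFTs; this is the Bochi--Gourmelon/Mañé-type argument. It yields, within $\varepsilon/3$ of $A$ in $\cC^0$, a locally constant $\sldr$ cocycle $(\sigma,A^{(1)})$ and a periodic point $p$ of some period $n$ with $A^{(1)}_n(p)=\id$. Concretely, I first approximate $A$ by a locally constant cocycle, which is possible because $\Sigma_H$ is totally disconnected. Non-domination persists for small perturbations, so the lack of domination produces a periodic orbit whose product can be perturbed to have all eigenvalues of modulus one. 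Further small rotations along the orbit then make the product the identity. \textbf{This step is the main obstacle}, because the argument in \cite{NRR1} was written for full shifts. I would redo it with the transition notation $T(\ua,\ub)$, using Lemma \ref{lem:connected} and Lemma \ref{lem:distinct-transitions} to find periodic orbits long enough to spread the perturbation over many disjoint cylinders.

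\emph{Step 2: obtaining the covering condition and multiple covering.} I apply Proposition \ref{prop:density-covering} to $A^{(1)}$. This gives a locally constant $A^{(2)}$ with $d_{\cC^0}(A^{(1)},A^{(2)})<\varepsilon/3$ satisfying the covering condition \eqref{eq:gen-covering}, with values in $G=\sldr$. By Theorem \ref{thm:multi-cov}, $A^{(2)}$ then satisfies the multiple covering condition of $m=2$ layers.

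\emph{Step 3: lower bound via Theorem \ref{thm:C}.} Theorem \ref{thm:C} gives $\varepsilon_1,\kappa_1,\gamma>0$ such that every $\alpha$-Hölder $\tilde A$ with $d_{\cC^\alpha}(A^{(2)},\tilde A)<\varepsilon_1$ satisfies $h_{\rm top}(\sigma|_{\bfB(\tilde A,\kappa_1)})\ge\gamma$. Here $A^{(2)}$ is locally constant, hence $\alpha$-Hölder. I may shrink $\varepsilon_1$ so that this $\cC^\alpha$-ball lies within $\varepsilon/3$ of $A^{(2)}$ in $\cC^0$; then every $\tilde A$ in it satisfies $d_{\cC^0}(A,\tilde A)<\varepsilon$. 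Note that $\bfB(\tilde A,\kappa)$ is monotone in $\kappa$, so the lower bound persists for any $\kappa\ge\kappa_1$.

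\emph{Step 4: upper bound via Theorem \ref{thm:upperbound}.} The potential obstruction is that $A^{(2)}$ might be conjugate to an isometric cocycle. To avoid this, before fixing the open set I perturb $A^{(2)}$ within the $\cC^\alpha$-ball to a Hölder cocycle $A^{(3)}$ that has a nonzero Lyapunov exponent for some measure. This is possible by the genericity of non-vanishing exponents \cite{Viana08}, or more simply by making some periodic product hyperbolic through a small locally constant perturbation on a cylinder; a small $\cC^\alpha$ perturbation keeps it inside the $\cC^\alpha$-ball. A periodic orbit with a hyperbolic product rules out conjugacy to an isometric cocycle, because conjugacy would force all eigenvalues of periodic products to have modulus one. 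Theorem \ref{thm:upperbound} with $\kappa:=\kappa_1$ then gives a $\cC^0$-neighbourhood $\cU$ of $A^{(3)}$ and $\eta>0$ with $h_{\rm top}(\sigma|_{\bfB(\tilde A,\kappa)})\le h_{\rm top}(\sigma|_{\Sigma_H})-\eta=:\gamma'$ for all $\tilde A\in\cU$.

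Finally, I take $\cB$ to be the intersection of $\cU$, which is $\cC^\alpha$-open since the $\cC^\alpha$ topology is finer, with the $\cC^\alpha$-ball of Step 3. This set is open and nonempty because it contains $A^{(3)}$, and every cocycle in it satisfies both bounds. It also follows that $\gamma\le\gamma'$, since $\cB$ is nonempty.
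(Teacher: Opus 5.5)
Your four steps follow the paper's proof. The paper does not redo Step 1; it cites \cite{BDP} and \cite[Section 6]{NRR1} to get a periodic point $p$ with $A_n(p)=\id$. It then applies Proposition \ref{prop:density-covering}, Theorem \ref{thm:multi-cov}, and Theorem \ref{thm:C} on a $\cC^\alpha$-ball shrunk to be $\cC^0$-close to $A$. Finally it takes a non-isometric cocycle in that ball, applies Theorem \ref{thm:upperbound}, and intersects the two neighbourhoods.

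\textbf{Error in Step 1.} The claim that non-domination persists under small perturbations is false. Admitting a dominated splitting is a $\cC^0$-open property, so its absence is only a closed condition. For example, $A\equiv\id$ has no dominated splitting, but the constant cocycle $\mathrm{diag}(e^{\eta},e^{-\eta})$ is dominated and $\cC^0$-close to it for small $\eta$. So you cannot first pass to a locally constant approximation and then use absence of domination for that approximation.

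\textbf{Fix: reverse the order.} Apply the Bonatti--D\'iaz--Pujals type perturbation to $A$ itself, which genuinely lacks domination. This gives a continuous $A'$ and a periodic point $p$ of period $n$ with $A'_n(p)=\id$. Then approximate $A'$ by a cocycle that is constant on the cylinders $\cylinder[-r;\ua]$, $\ua\in\cA_H^{\times(2r+1)}$, taking on each cylinder the value of $A'$ at a chosen point of it. Take $r$ large enough that the points $\sigma^i(p)$, $0\le i<n$, lie in distinct cylinders, and use them as the representatives of their cylinders. Uniform continuity gives $\cC^0$-closeness, and the product along $p$ stays exactly $\id$. Passing to a locally constant cocycle is indeed needed, since Proposition \ref{prop:density-covering} assumes it; it just has to come after the perturbation, not before.

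\textbf{Step 4 alternative.} Your substitute for \cite{Viana08}, a periodic orbit with hyperbolic product, is a legitimate and more elementary way to rule out conjugacy to an isometric cocycle. It does not work for an arbitrary periodic orbit, though. For $d=2$ the elliptic elements of $\mathrm{SL}(2,\R)$ form an open set, so an elliptic periodic product cannot be made hyperbolic by a small perturbation. Use the orbit of $p$ instead.
\begin{itemize}
\item The cylinders $\cylinder[-k;\underline{\tilde p}^j\underline{\tilde w}_i\underline{\tilde p}^{N-j}]$ modified in Proposition \ref{prop:density-covering} do not meet the orbit of $p$. A match would force phase alignment through the trailing $\underline{\tilde p}$ block, hence $\underline{\tilde w}_i=\underline{\tilde p}$, which is excluded. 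So $A^{(2)}_n(p)=\id$ still.
\item Replace $A^{(2)}$ by $A^{(2)}\,\mathrm{diag}(e^{\eta},e^{-\eta},1,\ldots,1)$ on a fixed deep cylinder containing $p$ but no other point of its orbit. Then $A^{(3)}_n(p)=\mathrm{diag}(e^{\eta},e^{-\eta},1,\ldots,1)$ is hyperbolic.
\item For that fixed cylinder the $\cC^\alpha$-cost is $O(\eta)$, so $A^{(3)}$ stays inside your Step 3 ball.
\end{itemize}
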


\begin{proof} By utilizing  Proposition \ref{prop:density-covering}, one can closely follow the proof of \cite[Theorem~7.1]{NRR1}.

If the cocycle $(\sigma,A)$ admits no dominated splitting, then by \cite{BDP} we can perturb it in the $\cC^0$ topology to obtain a periodic point $p\in \Sigma_H$ of some period $n\geq 1$ for $\sigma$ such that $A_n(p)=\id$ (see \cite[Section 6]{NRR1} for more details). Then, using Proposition \ref{prop:density-covering}, after a second sufficiently small $\cC^0$ perturbation, we obtain a locally constant cocycle $(\sigma, A')$  with $d_{\cC^0}(A,A')<\varepsilon$ satisfying the covering condition \eqref{eq:gen-covering}. By Theorem \ref{thm:multi-cov}, this covering condition is equivalent to the multiple covering condition. Consequently, Theorem \ref{thm:C} ensures that there exists an open neighbourhood of $A'$ in the $\cC^\alpha$ topology, and positive constants $\kappa, \gamma$ such that for every $(\sigma,\tilde{A})$ in this open neighborhood, $h_{\rm top}(\sigma\big|_{{\bfB}(\tilde{A},\kappa)})\geq \gamma$.  By shrinking this neighborhood, we can guarantee that all its elements have a $\cC^0$ distance less than $\varepsilon$ from the initial cocycle $(\sigma,A)$. Denote this neighborhood by $\cB'$.

Now, by \cite[Theorem~A]{Viana08}, the set of cocycles which are not isometric is $\cC^\alpha$-dense in $\cB'$. Note that the statement of the mentioned theorem in \cite{Viana08} is for differentiable maps but it holds for hyperbolic homeomorphisms such as SFTs with positive entropy (see Subsection 1.3 in \cite{Viana08}). The invariant measure in the statement of \cite[Theorem~A]{Viana08} should be taken as the measure of maximal entropy for $\sigma\big|_{\Sigma_H}$, which is well-known to exist and possess product structure. 

Now, take an arbitrary cocycle $(\sigma, A'')\in \cB'$ with a positive Lyapunov exponent at some point. In particular, this cocycle is not continuously conjugate to an isometric cocycle. Thus, by Theorem \ref{thm:upperbound}, there is $\gamma'<h_{\rm top}(\sigma\big|_{\Sigma_H})$ and a $\cC^0$-neighborhood $\cB''$ of $(\sigma,A'')$ such that every cocycle in $\cB''$ has $h_{\rm top}(\sigma\big|_{{\bfB}(\tilde{A},\kappa)}) \leq \gamma'$. Now, for the conclusion of the proof of Theorem \ref{thm:A'}, it suffices to take $\cB=\cB'\cap \cB''$, which is an open set in the $\cC^\alpha$ topology (note that $\cC^0$-open sets are open in the $\cC^\alpha$ topology as well), and the proof is complete. 
\end{proof}

 \section{Further comments and questions}\label{sec:questions}

In this section, we propose two directions for future research arising from our results.

 \subsection{Bounded orbits vs. orbits with subexponential growth}
Clearly, for every base point whose orbit yields a uniformly bounded fiberwise product of matrices, all Lyapunov exponents exist and are equal to zero. Therefore, a trivial upper bound for the topological entropy of the set of fiberwise bounded orbits is given by the topological entropy of the set of points whose Lyapunov exponents vanish in the fiber. The following question asks whether these two quantities always coincide.

\begin{question}
Let $(\sigma, A)$ be a continuous $\mathrm{SL}(2,\R)$ cocycle over a subshift of finite type $\sigma : \Sigma_H \to \Sigma_H$. Let $\mathrm{B}$ denote the set of points with fiberwise bounded orbits. Is it true that the topological entropy of $\mathrm{B}$ equals the topological entropy of the set of points whose Lyapunov spectrum degenerates? That is, 
\[
h_{\mathrm{top}}(\sigma\big|_{\mathrm{B}}) = h_{\mathrm{top}}(\sigma\big|_{\Lambda^{\star}}),
\]
where $\Lambda^{\star} = \left\{ x \in \Sigma_H \,:\, \lim_{n \to \infty} \frac{1}{n} \log   \|A_n(x)\| = 0 \right\}$?
\end{question}

This question is already nontrivial and interesting in the case of locally constant cocycles depending only on the zero coordinate. Another important special case arises for one-dimensional cocycles. In this setting, the answer to the above question is known to be positive for locally constant cocycles depending on the zero coordinate. The abelian nature of the problem simplifies the analysis and allows one to reinterpret it in terms of random walks on the real line.

This one-dimensional case is closely related to the literature on the robust existence of invariant measures with zero Lyapunov exponents for diffeomorphisms.

\subsection{Dependence of topological entropy on parameters}

Our results establish the positivity of the topological entropy of the dynamics restricted to the set of fiberwise bounded orbits. However, we only obtain a lower bound and do not provide an explicit formula for this entropy. It would be quite interesting to compute the topological entropy more precisely, at least in the case of a full shift generated by finitely many matrices.

A natural related problem concerns parameter dependence: when the underlying symbolic dynamics is fixed, how does the topological entropy of the fiberwise bounded set vary as a function of the entries of the generating matrices? The following example serves as a first step in this direction.

For $\theta \in \R$, consider the locally constant cocycle over the full shift on two symbols generated by the two matrices 
$\left(\begin{smallmatrix} 2 & 0 \\ 0 &1/2 \end{smallmatrix}\right)$ and 
$\left(\begin{smallmatrix} \cos \theta & -\sin \theta \\ \sin \theta & \cos \theta \end{smallmatrix}\right)$ in $\mathrm{SL}(2,\R)$.
Let $\mathrm{B}_\theta$ denote the set of points with fiberwise bounded orbits in the cocycle associated with these matrices as generators. The following problem concerns the dependence of the topological entropy on the parameter $\theta$.

\begin{question}
Is the function $\theta \mapsto h_{\mathrm{top}}(\sigma\big|_{\mathrm{B}_\theta})$ continuous?  
Is it strictly increasing on some interval $[0, \theta_0]$?
\end{question}

\bibliographystyle{amsalpha}  
\providecommand{\bysame}{\leavevmode\hbox to3em{\hrulefill}\thinspace}
\providecommand{\MR}{\relax\ifhmode\unskip\space\fi MR }
\providecommand{\MRhref}[2]{%
  \href{http://www.ams.org/mathscinet-getitem?mr=#1}{#2}
}
\providecommand{\href}[2]{#2}

\end{document}

%% file: macros.tex
\usepackage{marginnote}

\newcommand{\exwar}[1]{}

\DeclareMathOperator{\GL}{\rm{GL}}

\newcommand{\cylinder}{\mathsf{C}}

\newcommand{\bfB}{{\bf B}} 
\newcommand{\ua}{\underline{a}}
\newcommand{\ub}{\underline{b}}

\newcommand{\uw}{\underline{w}}
\newcommand{\ud}{\underline{d}}

\newcommand{\R}{\mathbb{R}}\newcommand{\N}{\mathbb{N}}
\newcommand{\Z}{\mathbb{Z}}

\newcommand{\sldr}{\mathrm{SL}(d,\mathbb{R})}
\newcommand{\gldr}{\mathrm{GL}(d,\R)}

\newcommand{\id}{\mathrm{Id}}

\newcommand{\cA}{\mathcal{A}}
\newcommand{\cB}{\mathcal{B}}
\newcommand{\cC}{\mathcal{C}}

\newcommand{\cG}{\mathcal{G}}

\newcommand{\cL}{\mathcal{L}}

\newcommand{\cO}{\mathcal{O}}
\newcommand{\cP}{\mathcal{P}}

\newcommand{\cS}{\mathcal{S}}
\newcommand{\cT}{\mathcal{T}}
\newcommand{\cU}{\mathcal{U}}
\newcommand{\cV}{\mathcal{V}}
\newcommand{\cW}{\mathcal{W}}

\newcommand{\cZ}{\mathcal{Z}}

\numberwithin{equation}{section}
\newtheorem{theorem}{Theorem}[section] 

\newtheorem{lemma}[theorem]{Lemma}
\newtheorem*{lemma*}{Lemma}
\newtheorem{proposition}[theorem]{Proposition}
\newtheorem*{proposition*}{Proposition}

\newtheorem{question}[theorem]{Question}

\newtheorem*{question*}{Question}
\newtheorem*{theorem*}{Theorem}
\newtheorem*{claim*}{Claim}

\newtheorem{theoremain}{Theorem}

\theoremstyle{definition}
\newtheorem{definition}[theorem]{Definition}

\newtheorem{example}[theorem]{Example}

\theoremstyle{remark}
\newtheorem{remark}[theorem]{Remark}